\DeclareMathOperator*{\argmax}{arg\,max}
\DeclareMathOperator*{\argmin}{arg\,min}
\let\nyil\to
\renewcommand{\P}{{\cal{} P}}
\newcommand{\Z}{\mathbb{Z}}
\newcommand{\R}{\mathbb{R}}
\newcommand{\egy}{\mathbbm{1}}
\newcommand{\qedwhite}{\hfill \ensuremath{\square}}
\newtheorem{thm}{Theorem}[section]
\newtheorem{lem}[thm]{Lemma}
\newtheorem{lemma}[thm]{Lemma}
\newtheorem{claim}[thm]{Claim}
\newtheorem{defn}[thm]{Definition}
\newtheorem{prob}[thm]{Problem}
\title{Balanced Submodular Flows}
\author{Alpár Jüttner}
\author{Eszter Szabó}
\affil{{\footnotesize Department of Operations Research, E\"otv\"os Lor\'and University, Budapest, Hungary.\\ Email: \texttt{alpar.juttner@ttk.elte.hu, szaboeszti@student.elte.hu}.}}
\date{}
\algrenewcommand\algorithmicloop{\textbf{repeat}}
\begin{document}

\maketitle

\begin{quote}
{\bfseries Abstract:}
This paper examines the Balanced Submodular Flow Problem, that is the
problem of finding a feasible submodular flow minimizing the
difference between the flow values along the edges. A min-max formula is
given to the problem and an algorithm is presented to solve it using
$\mathcal{O}(m^2)$ submodular function minimizations. Then, these
results are extended to the weighted version of the problem. Finally,
the Balanced Integer Submodular Flow Problem is discussed.
\end{quote}

\begin{quote}
{\bf Keywords: submodular flow, balanced optimization, strongly polynomial algorithm}
\end{quote}
\vspace{5mm}

\section{Introduction}
Balanced optimization problems aim to find a most
equitable distribution of resources. Several problems have been
analysed in the literature such as the Balanced Spanning Tree Problem
studied by Camerini, Maffioli, Martello, and Toth~\cite{CAMERINI}, by
Longshu Wu~\cite{longshu} and by Ficker, Spieksma, and
Woeginger~\cite{RePEc:spr:eurjco:v:6:y:2018:i:3:d:10.1007_s13675-018-0093-y}
or the Balanced Assignment Problem by Martello, Pulleyblank, Toth, and
de Werra~\cite{MARTELLO}. Duin and Volgenant~\cite{DUIN199143}
proposed a generic scheme for minimum deviation problems, that is also
usable for solving certain balanced optimization problems including
both the above ones. Ahuja proposed a parametric simplex method for
the general Balanced Linear Programming Problem~\cite{AHUJA}.

The Balanced Network Flow Problem introduced by
Scutell\`a~\cite{Scutella} aims to find a feasible network flow
minimizing the difference between the maximum and minimum flow values
along the edges, i.e.\ $\max_{e\in A}f(e)-\min_{e\in A} f(e)$.
Scutell\`a presented an algorithm using Newton's approach that
performs $\mathcal{O}(n^2\log^3(n))$ max-flow computations, where $n$
is the number of nodes. The weighted version of the problem has also
been solved by Scutell\`a and Klinz~\cite{bettina99:_balan}.

This paper examines an extension of the problem above to submodular
flows, where the goal is to find a feasible submodular flow minimizing
the difference between flow values along the edges. A strongly
polynomial algorithm is given for solving the Balanced Submodular
Flow Problem with $\mathcal{O}(m^2)$ submodular function
minimizations, where $m$ is the number of edges in the input graph.
Then, Section~\ref{sec:weighted} examines the Weighted Balanced
Submodular Flow problem, that is the problem of finding a feasible
submodular flow minimizing the difference between the maximum and
minimum weighted flow values along the edges.
We show that an optimal solution can be found by solving
$\mathcal{O}(n^4m^6\log^6(m))$ submodular function minimization
problems. In Section~\ref{sec:integral}, the Balanced Integral
Submodular Flow Problem is introduced and an optimal integral solution
is determined using the fractional optimum.

\section{Preliminaries}

\begin{defn}
  For an underlying set $V$, let $\P(V)$ denote the power set of $V$,
  i.\,e.\ the set of all subsets of $V$. The set functions
  $b,p:\P(V)\nyil \R$ are called \emph{submodular} or
  \emph{supermodular} if
\begin{equation}
    b(X)+b(Y)\geq b(X\cup Y)+b(X\cap Y)
\end{equation}
or
\begin{equation}
    p(X)+p(Y)\leq p(X\cup Y)+p(X\cap Y)
\end{equation}
holds for each subsets $X,Y\subseteq V$, respectively. A function is
called \emph{modular} if it is both sub- and supermodular.
\end{defn}

\begin{thm}[Orlin~\cite{submodmin}]\label{thm:submodminalg}
  Assuming that the value of a submodular function $b$ can be computed
  for any $X\subseteq V$ in time $T$, then the value of $\min\{b(X):X\subseteq V\}$
  can be computed in time $\mathcal{O}(n^5 T + n^{6})$.
\end{thm}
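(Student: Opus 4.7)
This is a deep result of Orlin whose complete proof is quite involved; we only outline the main ideas behind it. The plan is to reduce submodular function minimization to an optimization problem over the base polytope $B(b) = \{x \in \R^V : x(V) = b(V),\ x(S) \leq b(S)\text{ for all } S \subseteq V\}$. By Edmonds' min-max theorem, $\min\{b(X) : X \subseteq V\} = \max\{x^-(V) : x \in B(b)\}$, where $x^-(V) = \sum_{v\in V:\ x(v)<0} x(v)$, so it suffices to maximize $x^-(V)$ over $B(b)$.

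The algorithmic framework is to maintain a point $x \in B(b)$ as an explicit convex combination of extreme points of $B(b)$, each of which is generated by the greedy algorithm applied to some linear order on $V$. A single iteration looks for an augmenting path in an auxiliary digraph that captures which coordinate exchanges are realisable by reordering, and then transfers mass from a positive coordinate of $x$ to a negative one along this path. When no such path exists, the set of indices that cannot reach a positive coordinate yields a certificate set $S$ with $b(S) = x^-(V)$, proving optimality. A single exchange costs $\mathcal{O}(T)$ time for one submodular function evaluation plus $\mathcal{O}(n)$ combinatorial work for updating the representation.

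The principal obstacle, and Orlin's main contribution, is to bound the total number of augmentations by a function of $n$ alone, independent of the magnitude of the values taken by $b$. This is achieved by a refined pivoting rule together with an amortized potential argument that keeps the size of the convex-combination representation of $x$ small and forces steady progress toward optimality. The analysis yields $\mathcal{O}(n^5)$ augmentations in the worst case, and therefore the overall running time of $\mathcal{O}(n^5 T + n^6)$ claimed in the statement. For the full technical details we refer the reader to~\cite{submodmin}.
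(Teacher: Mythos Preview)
The paper does not actually prove this theorem at all: it is stated in the Preliminaries section purely as a cited result of Orlin, with no argument given. So there is no ``paper's own proof'' to compare against; the paper simply invokes~\cite{submodmin}.

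Your outline is a reasonable high-level sketch of the ideas behind Orlin's algorithm (Edmonds' min--max relation, maintaining a point of the base polytope as a convex combination of greedy extreme points, augmenting along exchange paths, and a strongly polynomial bound on the number of augmentations). As a proof sketch it is fine, and appropriately defers the technical details to the original source. Just be aware that for the purposes of this paper no proof was expected or required --- the theorem is used as a black box --- so your write-up goes beyond what the paper itself does.
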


Whenever a submodular function minimization is used as a subroutine,
its running time will be denoted by $\Upsilon$ for the sake on
simplicity.

\begin{defn}
  For a directed graph $G=(V,A)$ and a subset of vertices
  $X\subseteq V$, let $\tilde\varrho(X)$ and $\tilde\delta(X)$ denote
  the set of edges entering $X$ and leaving $X$, respectively. For a
  vector $x\in\R^A$, let
  \begin{equation}
    \varrho_x(X):=\sum_{e\in \tilde\varrho(X)} x(e)
    ,\quad
    \delta_x(X):=\sum_{e\in \tilde\delta(X)} x(e)
    \quad\mbox{and}\quad
    \partial_x(X) := \varrho_x(X)-\delta_x(X).
  \end{equation}
  Furthermore, let
  $\varrho(X)$, $\delta(X)$ and $\partial(X)$ denote the number of
  edges entering $X$, leaving $X$, and their difference, respectively.
\end{defn}

It is straightforward to check that $\varrho_x(X)$ and $\delta_x(X)$
are submodular functions for any nonnegative vector $x$. If
$l,u\in \R^A$ and $l\leq u$, then $ \varrho_u(X)-\delta_l(X)$ is
submodular and $ \varrho_l(X)-\delta_u(X)$ is supermodular.

\begin{defn}
  Let us given a directed graph $G=(V,A)$ and a submodular function
  $b:\P(V)\nyil\R$. A vector $x\in\R^A$ is called a \emph{submodular
    flow} if
\begin{equation}
    \varrho_x(X)-\delta_x(X) \leq b(X)
\end{equation}
holds for each $X\subseteq V$.

For vectors $l,u\in\R^A$, a submodular flow $x$ is called
\emph{$(l,u)$-bounded} if $l\leq x\leq u$.
\end{defn}

\begin{thm}\label{thm:submodflow}
  For lower and upper bounds $l,u\in\R^A$, there exists an
  $(l,u)$-bounded submodular flow if and only if $l\leq u$ and
  \begin{equation}
    \varrho_l(X)-\delta_u(X) \leq b(X)
    \label{eqn:submoddual}
  \end{equation}
  holds for each $X\subseteq V$.
\end{thm}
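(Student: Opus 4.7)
Necessity is immediate: for any $(l,u)$-bounded submodular flow $x$ and any $X\subseteq V$, summing $l(e)\leq x(e)$ over $e\in\tilde\varrho(X)$ and $x(e)\leq u(e)$ over $e\in\tilde\delta(X)$ gives $\varrho_l(X)\leq\varrho_x(X)$ and $\delta_x(X)\leq\delta_u(X)$, so $\varrho_l(X)-\delta_u(X)\leq \varrho_x(X)-\delta_x(X)\leq b(X)$.

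For sufficiency I would consider the polyhedron
$P=\{x\in\R^A:l\leq x\leq u,\ \varrho_x(X)-\delta_x(X)\leq b(X)\ \forall X\subseteq V\}$,
assume it is empty, and apply the Farkas lemma to obtain nonnegative multipliers $\lambda_X$ ($X\subseteq V$) and $\alpha_e,\beta_e$ ($e\in A$) witnessing infeasibility. Taking $\alpha_e,\beta_e$ as the positive and negative parts of $\phi_e:=\sum_X\lambda_X(\egy[e\in\tilde\varrho(X)]-\egy[e\in\tilde\delta(X)])$, the certificate rewrites as $\sum_X\lambda_X b(X)<\sum_e\phi_e\tilde l(e)$, where $\tilde l(e)=l(e)$ if $\phi_e>0$ and $\tilde l(e)=u(e)$ if $\phi_e<0$.

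The heart of the argument is an uncrossing step on the family $\mathcal{F}=\{X:\lambda_X>0\}$. For crossing $X,Y\in\mathcal{F}$, shifting $\min(\lambda_X,\lambda_Y)$ of weight from $\{X,Y\}$ to $\{X\cap Y, X\cup Y\}$ leaves every $\phi_e$ unchanged, while $\sum_X\lambda_X b(X)$ weakly decreases by submodularity of $b$, so the infeasibility certificate survives. Iterating reduces $\mathcal{F}$ to a chain $X_1\subset\cdots\subset X_k$, and for such a chain a direct calculation yields $\sum_e\phi_e\tilde l(e)=\sum_i\lambda_{X_i}(\varrho_l(X_i)-\delta_u(X_i))$. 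The certificate then becomes $\sum_i\lambda_{X_i}\bigl[b(X_i)-\varrho_l(X_i)+\delta_u(X_i)\bigr]<0$, which contradicts the hypothesis term by term.

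\emph{Main difficulty.} The $\phi_e$-invariance under uncrossing is verified by classifying, for each edge $e=(u,v)$, the positions of $u$ and $v$ among $X\setminus Y$, $Y\setminus X$, $X\cap Y$ and $(X\cup Y)^c$: the only edges whose individual set contributions change are those with $u,v$ split across $X\setminus Y$ and $Y\setminus X$, and their ``enter'' and ``leave'' contributions cancel pairwise. This bookkeeping, together with the chain identity above, is the only non-routine step -- with them in hand the contradiction is immediate.
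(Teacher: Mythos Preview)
The paper does not prove Theorem~\ref{thm:submodflow}; it is stated in the Preliminaries section as a known result (the classical feasibility theorem for submodular flows, essentially due to Edmonds--Giles/Frank) and is used throughout without proof. So there is no ``paper's proof'' to compare against.

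Your plan is a correct and standard proof of this theorem. A couple of minor remarks:
\begin{itemize}
\item The $\phi_e$-invariance under uncrossing is exactly the modularity of $\partial$: for any $x\in\R^A$ one has $\partial_x(X)+\partial_x(Y)=\partial_x(X\cap Y)+\partial_x(X\cup Y)$, and applying this with $x=\egy_e$ gives the identity you need in one line. The edge-by-edge case analysis you describe is unnecessary (and your description of which cases ``change'' is not quite accurate, though the conclusion is).
\item ``Iterating reduces $\mathcal F$ to a chain'' hides the only point that needs care in the real-valued setting: naive uncrossing on real multipliers need not terminate. The clean fix is to take an \emph{extreme} Farkas certificate $(\lambda,\alpha,\beta)$; then the support $\{X:\lambda_X>0\}$ is automatically a chain (if two supporting sets crossed, the uncrossing move would exhibit $(\lambda,\alpha,\beta)$ as a midpoint of two distinct certificates). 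Alternatively, argue first for rational data and pass to the limit.
\end{itemize}
With these two points tightened, your argument is complete. The chain identity $\sum_e\phi_e\tilde l(e)=\sum_i\lambda_{X_i}(\varrho_l(X_i)-\delta_u(X_i))$ is correct: on a chain each edge lies in $\tilde\varrho(X_i)$ for a (possibly empty) interval of $i$'s and never in $\tilde\delta(X_i)$, or vice versa, so $\phi_e$ has a definite sign and $\tilde l(e)$ is $l(e)$ or $u(e)$ accordingly.
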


\begin{thm}[Frank~\cite{feasible_submodflow}]\label{thm:submodflowalg}
  Assume that $b:\P(V)\nyil\R$ is submodular and the value of $b(X)$ can be computed
  in time $T$ for any $X\subseteq V$, then an $(l,u)$-bounded
  submodular flow --- or a set $X$ violating~(\ref{eqn:submoddual}) if
  such a flow does not exist --- can be found in time $\mathcal{O}(n^5T)$.
\end{thm}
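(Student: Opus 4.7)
The plan is to use Theorem~\ref{thm:submodminalg} first to test the existence condition of Theorem~\ref{thm:submodflow}, and then, if a flow exists, to construct one by an augmenting procedure whose total cost is dominated by a constant number of submodular function minimizations. As a preliminary reduction I would substitute $y := x - l$ so that the problem becomes finding $y$ with $0 \leq y \leq c := u - l$ and $\varrho_y(X) - \delta_y(X) \leq b'(X)$ for all $X \subseteq V$, where $b'(X) := b(X) - \varrho_l(X) + \delta_l(X)$ is again submodular. Under this change of variables, condition~\eqref{eqn:submoddual} becomes $h(X) := b'(X) + \delta_c(X) \geq 0$ for every $X \subseteq V$.

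Feasibility is then tested by computing $\min_X h(X)$ via Theorem~\ref{thm:submodminalg} in time $\mathcal{O}(n^5 T + n^6)$; if the minimum is strictly negative, the minimizing set already witnesses the infeasibility of the instance. Otherwise, I would construct a flow by the following augmenting scheme: start with $y := 0$, and while some set $X_0$ satisfies $\varrho_y(X_0) - \delta_y(X_0) > b'(X_0)$, reduce the violation by pushing flow along a directed path in an auxiliary residual graph built so as to respect the box constraint $0 \leq y \leq c$. Each violated set $X_0$, or a certificate that none remains, is obtained by a single submodular minimization applied to the submodular function $X \mapsto b'(X) - \varrho_y(X) + \delta_y(X)$, and each augmentation respects the box constraints by construction.

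The main obstacle is bounding the number of iterations tightly enough that the total cost stays at $\mathcal{O}(n^5 T)$. A naive analysis would invoke Theorem~\ref{thm:submodminalg} once per augmentation and multiply the cost by the number of augmentations, which is far too much. The trick is to exploit the laminar (or chain) structure of the tight sets produced by Orlin's algorithm and to reuse the dual information across iterations, so that only $\mathcal{O}(1)$ further submodular minimization calls are necessary beyond the initial feasibility check, while the remaining combinatorial work (path searches and flow augmentations in the auxiliary graph) fits into $\mathcal{O}(n^5)$. Under the mild assumption $T = \Omega(n)$, the additive $n^6$ term from Theorem~\ref{thm:submodminalg} is absorbed into $\mathcal{O}(n^5 T)$, yielding the stated complexity. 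Making this amortization rigorous---in particular, showing that the augmenting phase can always be driven by information already extracted from the single feasibility SFM---is the technically delicate part of the proof.
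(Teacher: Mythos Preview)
The paper does not prove this theorem at all: it is stated in the Preliminaries section as a known result, attributed to Frank with the citation~\cite{feasible_submodflow}, and is used as a black box throughout. There is therefore no ``paper's own proof'' to compare your proposal against.

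As for the proposal itself, the feasibility test via a single submodular minimization of $h(X)=b'(X)+\delta_c(X)$ is correct and standard. The construction phase, however, is where your argument breaks down. You assert that after the initial feasibility SFM only $\mathcal{O}(1)$ further minimizations are needed and that ``dual information'' from Orlin's algorithm can be reused across augmentations, but you give no mechanism for this, and it is not how Frank's algorithm actually works. Frank's feasibility algorithm proceeds by a sequence of $\mathcal{O}(n^3)$ augmentations, each of which requires building an auxiliary digraph whose arcs are determined by membership in \emph{tight} sets; detecting tightness and finding the relevant exchange arcs is itself an SFM-type computation, and the $\mathcal{O}(n^5T)$ bound arises from the aggregate cost of these oracle calls over all augmentations, not from a single up-front minimization plus cheap combinatorics. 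Your amortization claim---that the augmenting phase can be ``driven by information already extracted from the single feasibility SFM''---is exactly the hard part, and nothing in the sketch indicates how a minimizer of $h$ (or even the full base-polyhedron representation Orlin's algorithm produces) would let you locate violated sets and exchange-capacity arcs for an \emph{arbitrary} intermediate $y$ without fresh oracle work. Absent that, the proposal is a plausible outline of the feasibility test but not of the stated running time for actually finding a flow.
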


For estimating the running time of the proposed algorithms, we will
also use the following known results.

\begin{lem}[Goemans~\cite{radzikfrac}]\label{goemans}
  Let $b\in \mathbb{R}^p$ be a real vector and let
  $x_1,x_2,\dots,x_q\in{\{-1,0,+1\}}^p$ be vectors such that
  \[0<bx_{i+1}\leq \frac{1}{2}bx_{i} \quad \forall i \in \{1,2,\dots,q-1\}\]
  Then $q=\mathcal{O}(p\log(p))$
\end{lem}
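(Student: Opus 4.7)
The plan is to combine a blocking (scaling) step with a linear-algebraic dimension bound on $\{-1,0,+1\}$-vectors.

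First, by reflecting each coordinate $k$ with $b_k<0$ (i.e.\ replacing $b_k$ by $-b_k$ and $x_i(k)$ by $-x_i(k)$ for every $i$) I preserve the inner products $bx_i$ and the membership $x_i\in\{-1,0,+1\}^p$, so without loss of generality I assume $b\geq 0$. Next I partition the indices $\{1,\dots,q\}$ into consecutive blocks of length $T=\Theta(\log p)$, chosen large enough so that $bx_{i+T}\leq bx_i/\mathrm{poly}(p)$, and let $y_j:=x_{(j-1)T+1}$ denote the representative of the $j$-th block. The representatives $y_1,\dots,y_K$ (with $K=\lfloor q/T\rfloor$) still lie in $\{-1,0,+1\}^p$ and now satisfy $by_{j+1}\leq by_j/\mathrm{poly}(p)$.

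The crux is then to bound $K=O(p)$, from which $q\leq KT=O(p\log p)$ follows immediately. Assuming for contradiction $K\geq p+1$, the vectors $y_1,\dots,y_{p+1}$ must be linearly dependent in $\mathbb{R}^p$, so there are integers $\alpha_1,\dots,\alpha_{p+1}$, not all zero, with $\sum_j \alpha_j y_j=0$. Let $j_0$ be the smallest index with $\alpha_{j_0}\neq 0$. Taking the inner product with $b$ and rearranging yields
\[
|\alpha_{j_0}|\,by_{j_0}
\;=\;\Big|\sum_{j>j_0}\alpha_j\,by_j\Big|
\;\leq\;\Big(\max_{j>j_0}|\alpha_j|\Big)\sum_{j>j_0}by_j
\;\leq\;2\Big(\max_{j>j_0}|\alpha_j|\Big)\,by_{j_0+1},
\]
where the last inequality uses the geometric decay of the $by_j$. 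Since by construction $by_{j_0+1}\leq by_{j_0}/\mathrm{poly}(p)$, this forces $\max_{j>j_0}|\alpha_j|/|\alpha_{j_0}|\geq \mathrm{poly}(p)$. I plan to derive the desired contradiction from a suitable Cramer's rule / Hadamard-type bound on the coefficients of a minimal integer dependence among $\{-1,0,+1\}$-vectors.

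The principal obstacle lies precisely in the calibration of $T$ against that coefficient bound. A straightforward Hadamard estimate gives only $|\alpha_j|\leq p^{p/2}$, which forces $T=\Omega(p\log p)$ and yields the weaker $q=O(p^2\log p)$; recovering the claimed $O(p\log p)$ requires a sharper argument exploiting the structure of $\{-1,0,+1\}$-vectors (choosing a particularly \emph{balanced} minimal dependence, or iterating the blocking) so that $\max_j|\alpha_j|/|\alpha_{j_0}|$ stays polynomial in $p$.
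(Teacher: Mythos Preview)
The paper does not prove this lemma: it is quoted as a known result attributed to Goemans (via the cited survey of Radzik) and used as a black box throughout. There is thus no in-paper argument to compare against.

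Your proposal is a plan with an explicitly acknowledged gap, and as it stands it establishes only $q=O(p^{2}\log p)$ --- Radzik's earlier bound, not Goemans' $O(p\log p)$. The reduction to $b\ge 0$ and the blocking into representatives $y_{1},\dots,y_{K}$ with $by_{j+1}\le 2^{-T}by_{j}$ are correct. The trouble is exactly where you locate it: taking $p+1$ representatives and invoking a \emph{minimal} integer dependence, the Hadamard estimate $\max_{j}|\alpha_{j}|\le p^{p/2}$ is essentially sharp (Hadamard-type $\{\pm 1\}$-matrices witness it), so a ``particularly balanced minimal dependence'' with coefficient ratio polynomial in $p$ is simply not available in general, and you do not supply the promised ``sharper argument''.

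The approach is, however, salvageable if you drop minimality and use more representatives. If $K\ge 2p$, apply Siegel's lemma (equivalently, pigeonhole on the map $\alpha\mapsto\sum_{j\le 2p}\alpha_{j}y_{j}$ restricted to $\alpha\in\{0,1,\dots,H\}^{2p}$) to obtain a nonzero integer relation among $y_{1},\dots,y_{2p}$ with $\max_{j}|\alpha_{j}|=O(p)$. Plugging this into your displayed inequality and choosing $T$ with $2^{T}>cp$ for a suitable constant $c$ --- still $T=O(\log p)$ --- gives the contradiction $1\le|\alpha_{j_{0}}|<1$; hence $K<2p$ and $q\le (K+1)T=O(p\log p)$. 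In short, replacing ``Cramer/Hadamard on $p+1$ vectors'' by ``Siegel on $2p$ vectors'' turns your outline into a complete proof.
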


\begin{thm}[Erdős--Szekeres~\cite{Erdos_Szekeres}]\label{thm:erdos-szekeres}
  Given integer numbers $r, s$ and a sequence of distinct real numbers
  with length at least $(r-1)(s-1)+1$, there exists a
  monotonically increasing subsequence of length $r$ or a
  monotonically decreasing subsequence of length $s$.

  In the case when $r=s$, every sequence of length $n$ contains a
  monotonically increasing or decreasing subsequence of length
  $\sqrt{n}$.
\end{thm}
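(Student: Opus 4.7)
The plan is to prove the first part by the classical pigeonhole argument of Seidenberg, then derive the $r=s$ special case as a corollary. Label the sequence $a_1, a_2, \dots, a_N$ with $N \geq (r-1)(s-1)+1$. For every index $i$, I would define
\[
    u_i := \text{length of the longest monotonically increasing subsequence ending at } a_i,
\]
\[
    v_i := \text{length of the longest monotonically decreasing subsequence ending at } a_i.
\]
Each pair $(u_i, v_i)$ is an element of $\{1,2,\dots\}^2$.

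The core step is to show that the map $i \mapsto (u_i, v_i)$ is injective. Take any two indices $i < j$. Since the numbers in the sequence are distinct, either $a_i < a_j$ or $a_i > a_j$. In the first case, any longest increasing subsequence ending at $a_i$ can be extended by $a_j$, so $u_j \geq u_i + 1 > u_i$. In the second case, by the symmetric argument $v_j > v_i$. Either way $(u_i,v_i) \neq (u_j,v_j)$, giving injectivity.

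Now I would argue by contradiction: if there is no increasing subsequence of length $r$ and no decreasing subsequence of length $s$, then $u_i \in \{1,\dots,r-1\}$ and $v_i \in \{1,\dots,s-1\}$ for every $i$. Hence the pairs $(u_i,v_i)$ take at most $(r-1)(s-1)$ distinct values, contradicting the fact that there are $N \geq (r-1)(s-1)+1$ of them. The hardest conceptual point in the whole argument is really just the definition of the right pair $(u_i,v_i)$; the injectivity step is then forced by the distinctness hypothesis.

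For the second assertion, I would simply specialize to $r = s = \lceil \sqrt{n}\,\rceil$. Then $(r-1)(s-1) + 1 \leq (\sqrt{n})^2 - 2\sqrt{n} + 2 \leq n$ for $n \geq 1$, so the hypothesis of the first part is satisfied by any sequence of length $n$, yielding a monotone subsequence of length $\lceil \sqrt{n}\,\rceil \geq \sqrt{n}$. No further work is needed. I do not expect any real obstacle here beyond setting up the two statistics correctly.
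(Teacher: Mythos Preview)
The paper does not prove this theorem at all; it is quoted with a citation to Erd\H{o}s--Szekeres and used only as a black-box tool later in Section~\ref{sec:weighted}. Your argument is the standard Seidenberg pigeonhole proof and is correct for the first assertion.

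One small slip in the second part: with $r=s=\lceil\sqrt{n}\,\rceil$ your inequality $(r-1)(s-1)+1 \leq (\sqrt{n})^2 - 2\sqrt{n} + 2$ goes the wrong way whenever $n$ is not a perfect square, since then $\lceil\sqrt{n}\,\rceil - 1 > \sqrt{n}-1$. The conclusion you want still holds, but via the cleaner route $\lceil\sqrt{n}\,\rceil - 1 < \sqrt{n}$, hence $(\lceil\sqrt{n}\,\rceil - 1)^2 < n$, and as both sides are integers, $(\lceil\sqrt{n}\,\rceil - 1)^2 + 1 \leq n$. With that fix the corollary follows exactly as you say.
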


\begin{thm}[Goemans, Gupta and Jaillet~\cite{compute_value_gen}]\label{thm:Goemans-Gupta-Jaillet}
  Let $b$ be a submodular function on $V$, $|V|=n$ and assume that
  $b(\emptyset)\geq 0$.  Then the value of
  \begin{equation} \label{eq:delta*}
    \delta^*:= \min \left\{\delta \geq 0 :
      \min_{S\subseteq V}\left(b(S)+\delta a(S)\right)\geq 0\right\}
  \end{equation}
  can be computed by performing $\mathcal{O}(n\log^2n)$ submodular
  function minimization over the set $V$, i.\,e.~in time
  $\mathcal{O}(n^2\Upsilon)$.
\end{thm}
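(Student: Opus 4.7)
The plan is to compute $\delta^*$ by a Newton--Dinkelbach type iteration whose elementary step is a single submodular minimization, and to bound the iteration count via Goemans' lemma (Lemma~\ref{goemans}).

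For any fixed $\delta\geq 0$ the set function $S\mapsto b(S)+\delta a(S)$ is submodular (since $b$ is submodular and $a$ is modular), so one submodular minimization yields both $g(\delta):=\min_{S\subseteq V}\bigl(b(S)+\delta a(S)\bigr)$ and a minimizer $S_\delta$. Being the pointwise minimum of the affine maps $\delta\mapsto b(S)+\delta a(S)$, the function $g$ is concave and piecewise linear; its super-level set $\{\delta:g(\delta)\geq 0\}$ is therefore a closed interval, and consequently $\delta^*$ is either $0$ (when $g(0)\geq 0$) or a breakpoint of the form $-b(S^*)/a(S^*)$ for some $S^*$ with $a(S^*)>0$. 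The feasibility assumption $b(\emptyset)\geq 0$ just rules out the pathological case in which $\{\delta:g(\delta)\geq 0\}$ is empty.

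Starting from $\delta_0:=0$, at iteration $i$ I would compute $S_i:=\argmin_S\bigl(b(S)+\delta_i a(S)\bigr)$; if $b(S_i)+\delta_i a(S_i)\geq 0$ return $\delta_i$, otherwise set $\delta_{i+1}:=-b(S_i)/a(S_i)$. Concavity of $g$ and the exact-minimizer property of $S_i$ together guarantee that the iterates approach $\delta^*$ monotonically, never overshoot it, and that every $S_i$ encountered is new, so each step costs exactly one submodular minimization, i.e.\ $\Upsilon$ time by Theorem~\ref{thm:submodminalg}.

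The main obstacle, and essentially the only nontrivial ingredient, is proving the bound of $\mathcal{O}(n\log^2 n)$ iterations. The plan here is to identify each $S_i$ with its characteristic vector $\chi_{S_i}\in\{0,1\}^V$ and consider the successive differences $\chi_{S_{i+1}}-\chi_{S_i}\in\{-1,0,+1\}^V$, to which Lemma~\ref{goemans} can be applied using a test vector built out of the coefficients of $b$ and $a$. A direct application then produces an $\mathcal{O}(n\log n)$ upper bound on any subsequence of iterations along which the violation $-g(\delta_i)>0$ at least halves; an additional factor of $\log n$ is expected to come from an outer binary-search wrapper that localises the value of $a(S^*)$ at the binding breakpoint before Goemans' halving argument can be invoked. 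Carrying out this two-level analysis cleanly is the technically delicate part; the remainder is routine bookkeeping, and together with Theorem~\ref{thm:submodminalg} it yields the claimed overall running time $\mathcal{O}(n^2\Upsilon)$.
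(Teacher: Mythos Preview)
Your Newton--Dinkelbach setup is correct and matches the approach the paper (and Goemans--Gupta--Jaillet) actually uses: each step is one submodular minimization, $g$ is concave piecewise linear, the iterates increase monotonically to $\delta^*$, and the slopes $a(S_i)$ strictly decrease. Bounding the iterations in which $a(S_{i+1})/a(S_i)\le 2/3$ by $\mathcal{O}(n\log n)$ via Lemma~\ref{goemans} (applied with $x_i=\chi_{S_i}\in\{0,1\}^n$ and the vector $a$) is also right.

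The gap is in the other half of the iteration-count analysis. You propose to handle the remaining iterations---those where the violation $-g(\delta_i)$ shrinks geometrically but $a(S_i)$ does not---by another invocation of Lemma~\ref{goemans} ``using a test vector built out of the coefficients of $b$ and $a$'', with an additional $\log n$ coming from a binary-search wrapper. This does not work: $b$ is a genuinely submodular set function, not modular, so it has no representation as a vector in $\mathbb{R}^n$ against which $\{-1,0,1\}^n$-vectors can be tested. Lemma~\ref{goemans} is a purely linear tool and cannot see the submodular structure of $b$; yet it is precisely that structure which forces termination in this regime.

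The actual argument (visible in the paper's proof of the extension, Theorem~\ref{thm:function_value}) splits the iterations into the set $I$ where $a_{i+1}/a_i\le 2/3$ and maximal runs $J_1,\dots,J_q$ of consecutive indices where instead $h_{i+1}/h_i<1/3$. On each such run one applies the ring-family machinery of Lemmas~\ref{thm:b_decrease_fast}--\ref{thm:chainringfamilies} and Theorem~\ref{thm: submodfunc} to the submodular function $b(\cdot)+\delta_t a(\cdot)$ (with $\delta_t$ the terminal value of the run): the geometric growth of the $b'$-values forces each successive $S_i$ to lie outside the ring family generated by the later ones, and that can happen at most $\binom{n+1}{2}+1$ times in total. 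The extra $\log n$ factor in the final bound is not a binary search; it comes from trimming $\lceil\log(n^2/4)\rceil$ indices off each run before Lemma~\ref{thm:notinringfamily} applies, and there are at most $|I|+1=\mathcal{O}(n\log n)$ runs.
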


A family $R\subseteq \P(V)$ is called a
ring family if it is closed under taking unions and intersections. For
a given family of subsets $X_1, X_2, \dots, X_k$ of $V$, let
$R(X_1, X_2, \dots X_k)$ denote the ring family generated by
$X_1, X_2, \dots, X_k$. 

\begin{lem}[Goemans et al.~\cite{compute_value_gen}]\label{thm:b_decrease_fast}
  Let $b:\P(V) \rightarrow \mathbb{R}$ be a submodular function with
  $b_{\min}=\min_{S\subseteq V}b(S)\leq 0$. Consider a sequence of
  distinct sets $X_1, X_2, \dots, X_q$ such that
  $b(X_1)=b_{\min}, b(X_2)>-2b_{\min}$, and $b(X_i)\geq 4b(X_{i-1})$
  for $3\leq i \leq q$. Then
  \[b(X_q)>\max\{b(X) : X\in R({X_1,\dots , X_{q-1}})\},\]
  therefore
  $X_q\notin R({X_1,\dots , X_{q-1}})$.
\end{lem}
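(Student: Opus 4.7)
The plan is to argue by strong induction on $q$. For the base case $q=2$, one has $R(X_1)=\{X_1\}$, so $\max_{X\in R(X_1)} b(X)=b(X_1)=b_{\min}$; the required strict inequality $b(X_2)>b_{\min}$ then follows from $b(X_2)>-2b_{\min}\geq b_{\min}$, where the last step uses $b_{\min}\leq 0$.

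For the inductive step with $q\geq 3$, fix an arbitrary $Y\in R(X_1,\dots,X_{q-1})$ and split into two cases according to whether $X_{q-1}$ is essential to the construction of $Y$. If $Y\in R(X_1,\dots,X_{q-2})$, then the inductive hypothesis applied to the truncated sequence $X_1,\dots,X_{q-1}$---which still satisfies all three hypotheses of the lemma with the same $b_{\min}$---yields $b(Y)<b(X_{q-1})$. Since the growth conditions force $b(X_{q-1})>0$ for every $q\geq 3$ (starting from $b(X_2)>-2b_{\min}\geq 0$), this gives $b(X_q)\geq 4b(X_{q-1})>b(Y)$.

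Otherwise, I would exploit the distributive-lattice structure of $R(X_1,\dots,X_{q-1})$: writing $Y$ in disjunctive normal form as a union of intersections of generators and grouping the terms by whether they contain $X_{q-1}$ produces a decomposition $Y=A\cup(B\cap X_{q-1})$ with $A,B\in R(X_1,\dots,X_{q-2})$. Applying the submodular inequality $b(P\cup Q)+b(P\cap Q)\leq b(P)+b(Q)$ twice, combined with the trivial bound $b(\cdot)\geq b_{\min}$, gives
\[
b(Y)\;\leq\;b(A)+b(B\cap X_{q-1})-b_{\min}\;\leq\;b(A)+b(B)+b(X_{q-1})-2b_{\min}.
\]
The inductive hypothesis then yields $b(A),b(B)<b(X_{q-1})$, so $b(Y)<3b(X_{q-1})-2b_{\min}$. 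Finally, $b(X_{q-1})>-2b_{\min}$ (immediate for $q=3$ from the hypothesis $b(X_2)>-2b_{\min}$, and a fortiori for larger $q$ by geometric growth), so $3b(X_{q-1})-2b_{\min}<4b(X_{q-1})\leq b(X_q)$, as required.

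The principal obstacle is the decomposition step: justifying that every $Y$ essentially using $X_{q-1}$ admits the form $A\cup(B\cap X_{q-1})$ with $A,B$ in the smaller ring family, and handling the degenerate corner cases where $A$ or $B$ equals $\emptyset$ or $V$. The latter may fall outside $R(X_1,\dots,X_{q-2})$, but such a boundary case only arises when $V$ itself lies in $R(X_1,\dots,X_{q-1})$, forcing $V=\bigcup_{i\in I}X_i$ for some $I$. In that case, iterated submodularity on this union combined with the decay bound $b(X_i)\leq b(X_{q-1})/4^{q-1-i}$ (a direct consequence of the geometric growth) keeps $b(V)$ strictly below $b(X_q)$, so the induction goes through.
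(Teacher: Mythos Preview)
The paper does not prove this lemma; it is quoted verbatim from Goemans, Gupta and Jaillet with a citation and no argument, so there is no in-paper proof to compare against.

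Your inductive strategy is the standard one and matches the original Goemans--Gupta--Jaillet argument: decompose an arbitrary $Y\in R(X_1,\dots,X_{q-1})$ as $A\cup(B\cap X_{q-1})$ via DNF, apply submodularity twice together with $b(\cdot)\geq b_{\min}$ to obtain $b(Y)\leq b(A)+b(B)+b(X_{q-1})-2b_{\min}$, invoke the inductive hypothesis on $A$ and $B$, and close with $3b(X_{q-1})-2b_{\min}<4b(X_{q-1})\leq b(X_q)$. The arithmetic in that last step is precisely why the constant $4$ appears in the hypothesis.

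The genuine gap is in your final paragraph. Your claim that the degenerate case ``$B=V$'' arises only when $V\in R(X_1,\dots,X_{q-1})$ is false: already the trivial element $Y=X_{q-1}$ forces $A=\emptyset$ and $B=V$ in the DNF decomposition, with no constraint whatsoever on whether $V$ lies in the ring family. So the argument you sketch there (bounding $b(V)$ via iterated submodularity over a union $\bigcup_{i\in I}X_i$) is aimed at the wrong target. The actual fix is far simpler than what you propose. When $A=\emptyset$ you skip the first submodularity application and get $b(Y)=b(B\cap X_{q-1})\leq b(B)+b(X_{q-1})-b_{\min}$; when $B=V$ you skip the second and get $b(Y)\leq b(A)+b(X_{q-1})-b_{\min}$; when both degenerate, $b(Y)=b(X_{q-1})$. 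In every one of these cases the resulting upper bound is \emph{stronger} than the general bound $3b(X_{q-1})-2b_{\min}$, so the same closing inequality $b(Y)<4b(X_{q-1})\leq b(X_q)$ goes through with room to spare. Replace your last paragraph with this observation and the proof is complete.
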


\begin{lem}[Goemans et al.~\cite{compute_value_gen}]\label{thm:notinringfamily}
  Let $b:\P(V) \rightarrow \mathbb{R}$ be a submodular function and
  $\mathcal{T}\subseteq \P(V)$, where $|V|=n$. Assume that
  $b(S)\leq M$ for all $S\in \mathcal{T}$. Let $X \subseteq V$ be a
  subset, such that
  \begin{equation*}
    b(X)>\frac{n^2}{4}(M-b_{\min})
  \end{equation*} 
  Then $X\notin R(\mathcal{T})$.
\end{lem}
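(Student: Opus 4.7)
My plan is to establish the contrapositive: every $X\in R(\mathcal{T})$ satisfies $b(X)\leq \tfrac{n^2}{4}(M-b_{\min})$. The strategy combines a structural ``disjunctive normal form'' representation for elements of $R(\mathcal{T})$ with iterated submodularity, the latter paying at most $M-b_{\min}$ per combination step. Since shifting $b$ by a constant affects neither submodularity nor the quantity $M-b_{\min}$, I may assume $b_{\min}\leq 0$ from the outset.

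For the representation, for each $v\in V$ appearing in some member of $\mathcal{T}$ define
\[
X_v \;:=\; \bigcap\{T\in\mathcal{T}: v\in T\}.
\]
Closure of $R(\mathcal{T})$ under intersection gives $X_v\in R(\mathcal{T})$, and in fact $X_v$ is the $\subseteq$-minimum element of $R(\mathcal{T})$ containing $v$. Using this minimality one checks that $X=\bigcup_{v\in X}X_v$ for every $X\in R(\mathcal{T})$, so $X$ is a union of at most $n$ distinct atoms $X_v$. Since $|X_v|\leq n$, superfluous factors in the intersection defining $X_v$ can be discarded, leaving each $X_v$ as an intersection of at most $n$ sets from $\mathcal{T}$. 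Thus every $X\in R(\mathcal{T})$ admits a representation
\[
X \;=\; \bigcup_{i=1}^{k}\bigcap_{j=1}^{\ell_i} T_{ij}, \qquad T_{ij}\in\mathcal{T},\ k\leq n,\ \ell_i\leq n.
\]

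Applying submodularity pairwise gives $b(A\cap B),\,b(A\cup B)\leq b(A)+b(B)-b_{\min}$, and by induction
\[
b\!\Bigl(\bigcap_{j=1}^{\ell_i} T_{ij}\Bigr)\leq \ell_i M - (\ell_i-1)b_{\min}, \qquad b\!\Bigl(\bigcup_{i=1}^{k} Y_i\Bigr)\leq \sum_i b(Y_i) - (k-1)b_{\min}.
\]
Combining these and simplifying yields $b(X)\leq (M-b_{\min})\sum_i\ell_i + b_{\min}$, which together with $b_{\min}\leq 0$ gives an upper bound proportional to $\sum_i\ell_i\cdot(M-b_{\min})$.

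The main obstacle is obtaining the sharp constant $\tfrac{n^2}{4}$. The crude bound $\sum_i\ell_i\leq k\cdot \max_i\ell_i\leq n\cdot n = n^2$ only delivers $b(X)\leq n^2(M-b_{\min})$, so to recover the factor $\tfrac14$ one must argue that the DNF representation can be chosen with width $k$ and depth $\ell:=\max_i\ell_i$ satisfying $k+\ell\leq n$; AM--GM then gives $k\ell\leq n^2/4$. Establishing this width-plus-depth trade-off — for example by starting with an atom $X_{v_0}$ realising the maximal depth, observing that $\ell\leq n-|X_{v_0}|+1$ because each redundant-free factor strips at least one element from $V$, and showing that the remaining elements of $X\setminus X_{v_0}$ can be covered by at most $|X|-|X_{v_0}|$ further atoms — is the delicate combinatorial step I expect to be the crux of the argument.
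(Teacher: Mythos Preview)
The paper does not prove this lemma; it is quoted verbatim from Goemans, Gupta and Jaillet and used as a black box. So there is no ``paper's own proof'' to compare against, and your proposal must stand on its own.

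Your DNF-plus-submodularity scheme is sound up to the point where it yields
\[
b(X)\;\le\;(M-b_{\min})\sum_i\ell_i\;+\;b_{\min}\;\le\;(M-b_{\min})\sum_i\ell_i,
\]
and with the atom representation $X=\bigcup_{v\in X}X_v$ this gives $\sum_i\ell_i\le n^2$, hence $b(X)\le n^2(M-b_{\min})$. That much is correct.

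The gap is exactly where you flag it: the factor $\tfrac14$. Your proposed mechanism, proving $k+\ell\le n$ for the canonical atom DNF, is not merely delicate---it is false. Take $V=\{1,2,3\}$ and $\mathcal{T}=\{\{1,2\},\{1,3\},\{2,3\}\}$. Then $X_1=\{1\}$, $X_2=\{2\}$, $X_3=\{3\}$, each requiring $\ell_i=2$ factors, and $X=V$ forces $k=3$ maximal atoms. So $k+\ell=5>3=n$ and $\sum_i\ell_i=6>n^2/4=9/4$. The bound you want simply does not hold for the atom-based DNF you chose, even though a \emph{different} DNF ($V=\{1,2\}\cup\{2,3\}$, two leaves) does fit under $n^2/4$ here. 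Your sketch of the trade-off argument also does not add up arithmetically: from $\ell\le n-|X_{v_0}|+1$ and $k\le 1+|X|-|X_{v_0}|$ you get $k+\ell\le n+|X|-2|X_{v_0}|+2$, which is $\le n$ only when $|X_{v_0}|\ge(|X|+2)/2$, a condition you have no control over.

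A secondary point: the ``shift so that $b_{\min}\le 0$'' reduction is not valid, because shifting also moves $b(X)$ and hence does not preserve the inequality $b(X)\le\frac{n^2}{4}(M-b_{\min})$. In fact the lemma as written is false when $b_{\min}>0$ (take $b$ constant). The surrounding Lemmas~2.7 and~2.10 in the paper both carry the hypothesis $b_{\min}\le 0$ explicitly, and that is the intended context here as well; you should assume it outright rather than argue it is WLOG.
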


\begin{lem}[Goemans et al.~\cite{compute_value_gen}]\label{thm:chainringfamilies}
  Given a sequence of subsets $X_1, X_2 \dots X_k$ of $V$, define
  $L_i = R({X_1,\dots , X_i})$ for $ 1 \leq i \leq k$, where
  $|V|=n$. Assume that $X_i\notin L_{i-1}$ for all $i>1$. Then
    \begin{equation*}
        k\leq \binom{n+1}{2}+1
    \end{equation*}
\end{lem}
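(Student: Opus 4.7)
The plan is to define a monotone integer potential $\phi$ on ring families whose range has size $\binom{n+1}{2}+1$, so that $\phi$ increases strictly along the chain and the bound on $k$ follows. I would attach to each ring family $L$ an \emph{extended preorder} $\preceq_L^+$ on $V^+:=V\cup\{\top,\bot\}$, where $\top$ (resp.\ $\bot$) is a formal element lying in every (resp.\ no) set. Explicitly $u\preceq_L^+ v$ iff $u\in X\Rightarrow v\in X$ for every $X\in L$, under this convention on $\top,\bot$. Since $\preceq_L^+$ determines $L$ (one recovers $L$ as the family of subsets respecting the preorder and the forced memberships of $\top,\bot$), the correspondence $L\mapsto\preceq_L^+$ is injective; and since enlarging $L$ only adds implications, $L_{i-1}\subsetneq L_i$ forces $\preceq_{L_i}^+\subsetneq\preceq_{L_{i-1}}^+$.

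Writing $c(L)$ for the number of equivalence classes of $\preceq_L^+$ and $\nu(L)$ for the number of strictly ordered pairs of classes in the condensation poset, I would set
\[
\phi(L):=c(L)+\binom{c(L)}{2}-\nu(L),
\]
i.e.\ the number of classes plus the number of incomparable class-pairs in the condensation. A direct computation gives $\phi(L_1)=\phi(\{X_1\})=2$ (two classes $[\bot],[\top]$ with the single strict relation $[\bot]<[\top]$) and $\phi(2^V)=\binom{n+1}{2}+2$ (the $n+2$ singleton classes $\{\bot\},\{v_1\},\ldots,\{v_n\},\{\top\}$ with the middle ones forming an antichain, so $c=n+2$ and $\nu=2n+1$). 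Non-emptiness of $L$ forces $c(L)\geq 2$ and every class other than $[\bot],[\top]$ to lie strictly between them, which yields $\nu(L)\geq 2c(L)-3$ and the uniform bound $\phi(L)\leq\binom{n+1}{2}+2$ for every admissible $L$.

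The heart of the proof is to show $\phi(L_i)\geq\phi(L_{i-1})+1$ whenever $L_{i-1}\subsetneq L_i$. The transition realises $\preceq_{L_i}^+=\preceq_{L_{i-1}}^+\cap P_{X_i}^+$, where $P_{X_i}^+$ is the elementary bipartition preorder whose only two equivalence classes are $X_i\cup\{\top\}$ and $(V\setminus X_i)\cup\{\bot\}$. Consequently each equivalence class $C$ of $\preceq_{L_{i-1}}^+$ is either left intact or splits into exactly two sub-classes $C^-,C^+$ (its intersections with the two sides) ordered $C^-<C^+$ in the new preorder. If no class splits then the partial order on the condensation must strictly weaken (otherwise $L_i=L_{i-1}$), so $\nu$ drops by at least one while $c$ is unchanged, giving $\Delta\phi\geq 1$. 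If $m\geq 1$ classes split, a case analysis of how each old class-pair $\{C,D\}$ evolves into up to four sub-class pairs — with the key observation that when both $C$ and $D$ split and $C<D$, exactly one of the four new sub-class pairs (the ``wrong side'' one, namely $\{C^+,D^-\}$) becomes incomparable — shows that the combined gain $\Delta c+\Delta\binom{c}{2}$ exceeds $\Delta\nu$ by at least one. The main technical obstacle will be this bookkeeping in the splitting case, since simultaneous splits of multiple classes interact and one must carefully track whether each new sub-class pair becomes comparable or incomparable, and whether the relations to non-splitting classes are inherited on one or both sides. Once monotonicity is established, the chain values $\phi(L_1)<\phi(L_2)<\cdots<\phi(L_k)$ inject into the integer interval $[2,\binom{n+1}{2}+2]$, forcing $k\leq\binom{n+1}{2}+1$.
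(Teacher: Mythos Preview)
The paper does not prove this lemma; it is quoted from Goemans, Gupta and Jaillet and used as a black box, so there is no in-paper argument to compare against.

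Your strategy is sound and leads to a correct proof. The bijection between ring families on $V$ and preorders on $V^+=V\cup\{\top,\bot\}$ having $\bot$ as minimum and $\top$ as maximum is standard Birkhoff duality; your endpoint computations $\phi(L_1)=2$ and $\phi(L)\le\binom{n+1}{2}+2$ (via $c\le n+2$ and $\nu\ge 2c-3$) are correct; and the potential $\phi(L)=\binom{c(L)+1}{2}-\nu(L)$ does strictly increase along the chain. For the splitting case you flag as the main obstacle, the bookkeeping in fact goes through cleanly. Write $c=m+s+t$, where $m\ge 1$ classes split and $s$ (resp.\ $t$) intact classes lie inside (resp.\ outside) $X\cup\{\top\}$. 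Tabulating the contribution to $\nu'$ of each type of old comparable pair $E<F$ (both intact; smaller split and larger intact in/out; smaller intact in/out and larger split; both split, where as you note exactly the pair $\{E^+,F^-\}$ becomes incomparable) yields $\Delta\nu\le m+ms+mt+2\binom{m}{2}$. Since $\Delta c=m$ and $\Delta\binom{c}{2}=cm+\binom{m}{2}$, one obtains
\[
\Delta\phi\;\ge\;m+cm+\tbinom{m}{2}-m-m(s+t)-2\tbinom{m}{2}
\;=\;m^2-\tbinom{m}{2}\;=\;\tbinom{m+1}{2}\;\ge\;1.
\]
Together with the $m=0$ case (where $X_i\notin L_{i-1}$ forces at least one ``bad'' intact pair $E<F$ with $E\subseteq X\cup\{\top\}$ and $F\cap(X\cup\{\top\})=\emptyset$, so $\nu$ strictly drops), this closes the monotonicity step you left open.
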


\begin{thm}[Goemans et al.~\cite{compute_value_gen}]\label{thm: submodfunc}
  Let $b:\P(V) \rightarrow \mathbb{R}$ be a submodular function with
  $b_{\min}:=\min_{S\subseteq V}b(S)\leq 0$. Consider a sequence of
  distinct sets $T_1, T_2, \dots, T_q$ such that
  $f(T_1)=b_{\min}$, $b(T_2)>-2b_{\min}$ and $b(T_i)\geq 4b(T_{i-1})$ for
  $3\leq i \leq q$. Then $q\leq \binom{n+1}{2}+1$.
\end{thm}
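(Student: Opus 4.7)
The plan is to derive this statement as an immediate combination of Lemmas~\ref{thm:b_decrease_fast} and~\ref{thm:chainringfamilies}, with the content of Lemma~\ref{thm:b_decrease_fast} being reused at every prefix of the sequence.

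First, I would check that Lemma~\ref{thm:b_decrease_fast} can be applied not just to the full sequence but to each prefix $T_1,T_2,\dots,T_i$ with $2\leq i\leq q$. The two ``base'' conditions $b(T_1)=b_{\min}$ and $b(T_2)>-2b_{\min}$ are properties of the first two terms only, hence inherited by every prefix of length at least $2$; the ``growth'' condition $b(T_j)\geq 4b(T_{j-1})$ for $3\leq j\leq i$ is part of the original hypothesis. Therefore, for each such $i$, Lemma~\ref{thm:b_decrease_fast} yields
\[
  T_i \notin R(T_1,\dots,T_{i-1}).
\]
The borderline case $i=2$ needs no lemma at all: $R(T_1)=\{T_1\}$, and $T_2\neq T_1$ by distinctness.

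Next, setting $L_i:=R(T_1,\dots,T_i)$, the previous step says exactly that $T_i\notin L_{i-1}$ for every $i>1$, which is the hypothesis of Lemma~\ref{thm:chainringfamilies}. Applying that lemma to the sequence $T_1,\dots,T_q$ delivers the desired bound
\[
  q \leq \binom{n+1}{2}+1.
\]

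I do not expect any genuine obstacle: the theorem packages the two preceding lemmas into a single quantitative statement, and the only point that requires a moment of thought is that the second-term condition $b(T_2)>-2b_{\min}$ is a property of a single element of the sequence and therefore survives restriction to any prefix of length $\geq 2$, which is precisely what allows Lemma~\ref{thm:b_decrease_fast} to be re-invoked at each step.
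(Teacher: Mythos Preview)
Your proposal is correct and matches the derivation implicit in the paper: the paper states Theorem~\ref{thm: submodfunc} without proof, citing it from the same source as Lemmas~\ref{thm:b_decrease_fast} and~\ref{thm:chainringfamilies}, and the ordering of these results makes clear that the theorem is meant to be read as the direct combination you describe. Your observation that the hypotheses of Lemma~\ref{thm:b_decrease_fast} restrict to every prefix of length at least~$2$ is exactly the point that turns the single conclusion $T_q\notin R(T_1,\dots,T_{q-1})$ into the chain condition required by Lemma~\ref{thm:chainringfamilies}.
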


\section{Balanced Submodular Flows}\label{sec:balanced_submod_flow}
\begin{defn}
  The \emph{spread}\footnote{This expression can be seen under another
    name in the literature..} of a vector $x\in\R^A$ is the value
  \[\sigma(x):=\max_{a\in A}x(a) - \min_{a\in A}x(a)\].
\end{defn}

The \emph{Balanced Submodular Flow Problem} is to find a submodular
flow of minimum spread.
\begin{prob}\label{prb:balflow}
  Let us given a directed graph $G=(V,A)$ and a submodular function
  $b:\P(V)\nyil\Z$. Find
  \begin{equation}
    \sigma^*:=\min\left\{\sigma(x): x\in \R^A\quad
      \varrho_x(X)-\delta_x(X) \leq b(X)\quad\forall X\subseteq V\right\}
  \end{equation}
  along with a minimizing vector $x^*$.
\end{prob}

For the sake of simplicity, we assume that there is no isolated vertex
in the graph $G$ and there exists an unbounded submodular flow,
i.e. $b(U)\geq 0$ holds for every subset $U\subseteq V$ with 
$\varrho(U)=0$ and $\delta(U)=0$.

The following section presents a dual characterization of the value of
the minimum spread and then strongly polynomial algorithms to compute
this value will be given. For technical reasons, the cases of Eulerian
and non-Eulerian graphs are treated separately.

\subsection{Eulerian Graphs}

Throughout this section, a graph $G$ is called \emph{Eulerian} if
$\partial({v})=0$ holds for all $v\in V$. Note that $G$ is not
required to be connected.

Clearly, if $G$ is Eulerian and $x\in\R^A$ is a submodular flow, then
$x+\alpha\egy$ is also a submodular flow for any
$\alpha\in\R$. Therefore the Balanced Submodular Flow Problem reduces
to the problem of finding the minimum value $\sigma^*$ for which there
exists a bounded submodular flow $0\leq x^*\leq \sigma^*\egy$.
Applying Theorem~\ref{thm:submodflow}, $\sigma^*$ is the smallest
nonnegative value for which $b(X)+\sigma^*\delta(X)\geq 0$ holds for
all $X\subseteq V$. In other words, we are looking for the root of the
function
\begin{equation}
  \label{eq:nullaf}
  f(\sigma):=\min\{b(X)+\sigma\delta(X) : X\subseteq V\}
\end{equation}
or $f(\sigma)=0$ if this minimum is negative. 
This immediately gives the following dual characterization of the
minimum spread submodular flows.

\begin{thm}\label{thm:eqnulla}
  Assume that $G$ is Eulerian. Then
  \begin{equation}
    \label{eq:eqnulla}
    \sigma^* =  \max\left\{
      \frac{-b(X)}{\delta(X)} :
      X\subseteq V, \delta(X)>0
      \right\}
  \end{equation}
  or $\sigma^*=0$ if the value of this maximum is negative. 
\end{thm}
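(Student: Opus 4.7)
The plan is to convert the minimization problem defining $\sigma^*$ into the stated max formula by combining Theorem~\ref{thm:submodflow} with the Eulerian shift-invariance observed just above the statement. As already noted, when $G$ is Eulerian the set of feasible submodular flows is translation-invariant, so $\sigma^*$ is the smallest nonnegative value for which a $(0,\sigma^*\egy)$-bounded submodular flow exists. Applying Theorem~\ref{thm:submodflow} with $l=0$ and $u=\sigma^*\egy$, existence is equivalent to
\[
-\sigma^*\delta(X)\;\leq\; b(X)\qquad\forall X\subseteq V,
\]
i.e.\ to $b(X)+\sigma^*\delta(X)\geq 0$ for every $X\subseteq V$.

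Next I would dispose of the sets with $\delta(X)=0$ separately. Since $G$ is Eulerian, summing $\partial(v)=0$ over $v\in X$ yields $\varrho(X)=\delta(X)$ for every $X\subseteq V$, so $\delta(X)=0$ forces $\varrho(X)=0$. By the standing assumption that $b(U)\geq 0$ whenever $\varrho(U)=\delta(U)=0$, the inequality $b(X)+\sigma\delta(X)\geq 0$ holds automatically on every such set, regardless of $\sigma\geq 0$. Consequently only the sets with $\delta(X)>0$ constrain the choice of $\sigma$, and each of them contributes the lower bound $\sigma\geq -b(X)/\delta(X)$.

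Denoting $M:=\max\{-b(X)/\delta(X):X\subseteq V,\ \delta(X)>0\}$, the smallest nonnegative $\sigma$ satisfying every such lower bound is therefore $\max\{M,0\}$, and the two cases stated in the theorem follow directly. I do not expect any genuine obstacle here: the statement is essentially a repackaging of Theorem~\ref{thm:submodflow} as a ratio optimization, and the only minor subtlety, namely the treatment of sets with $\delta(X)=0$, is handled cleanly via the Eulerian identity $\varrho(X)=\delta(X)$ together with the feasibility assumption on $b$.
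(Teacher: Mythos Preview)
Your proposal is correct and follows essentially the same approach as the paper: the paper's proof is really the paragraph immediately preceding the statement, which reduces the problem via translation invariance to the existence of a $(0,\sigma\egy)$-bounded submodular flow and then applies Theorem~\ref{thm:submodflow}. You simply make explicit the one detail the paper leaves implicit, namely that the constraints from sets with $\delta(X)=0$ are vacuous thanks to the Eulerian identity $\varrho(X)=\delta(X)$ and the standing feasibility assumption on $b$.
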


Therefore, the problem reduces to a fractional optimization problem,
the optimum of which can be calculated using the standard
Discrete Newton Method~\cite{radzikfrac} outlined in
Algorithm~\ref{alg:nulla}. It is straightforward to see that
$\delta(X_i)$ strictly decreases in every iteration and a standard
argument shows that the final set $X_i$ indeed maximizes the value
$\frac{-b(X)}{\delta(X)}$. Thus we get the following

\begin{thm}
  The value $\sigma^*$ of the minimum spread and the corresponding dual $X$ 
  in Theorem~\ref{thm:eqnulla} are determined by Algorithm~\ref{alg:nulla} 
  within $\mathcal{O}(m)$ iterations. \qedwhite{}
\end{thm}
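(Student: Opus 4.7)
The plan is to analyse the Discrete Newton iteration behind Algorithm~\ref{alg:nulla} and bound the number of iterations by $m$. At iteration $i$ the algorithm computes $X_i$ minimizing $b(X)+\sigma_i\delta(X)$ via a single submodular function minimization and, if $f(\sigma_i)<0$, sets $\sigma_{i+1}:=-b(X_i)/\delta(X_i)$; otherwise it stops and returns $\sigma_i$ together with $X_i$.

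First I would verify correctness by the standard Newton argument. The function
\[
f(\sigma)=\min_{X\subseteq V}\bigl(b(X)+\sigma\delta(X)\bigr)
\]
is concave and non-decreasing in $\sigma$, being the pointwise minimum of affine functions with non-negative slopes (when $\delta(X)=0$ the constant term $b(X)$ is non-negative by the standing assumption, since in an Eulerian graph $\varrho(X)=\delta(X)=0$). The line $\ell_i(\sigma):=b(X_i)+\sigma\delta(X_i)$ supports $f$ at $\sigma_i$, so $\ell_i\geq f$ pointwise; its root $\sigma_{i+1}$ therefore satisfies $f(\sigma_{i+1})\leq \ell_i(\sigma_{i+1})=0$, hence $\sigma_{i+1}\leq\sigma^*$. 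Moreover $f(\sigma_i)<0$ together with $\delta(X_i)>0$ forces $\sigma_{i+1}>\sigma_i$, so the iterates form a strictly increasing sequence bounded above by $\sigma^*$, and the algorithm terminates exactly when $f(\sigma_i)=0$, at which point $X_i$ attains the maximum in~(\ref{eq:eqnulla}).

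Next I would turn to the iteration count, the core of which is the strict decrease of $\delta(X_i)$. Combining the optimality of $X_i$ at $\sigma_i$ with the optimality of $X_{i+1}$ at $\sigma_{i+1}$ gives
\begin{align*}
b(X_i)+\sigma_i\delta(X_i) &\leq b(X_{i+1})+\sigma_i\delta(X_{i+1}),\\
b(X_{i+1})+\sigma_{i+1}\delta(X_{i+1}) &\leq b(X_i)+\sigma_{i+1}\delta(X_i)=0.
\end{align*}
Adding after rearrangement yields $(\sigma_{i+1}-\sigma_i)\bigl(\delta(X_{i+1})-\delta(X_i)\bigr)\leq 0$, and since $\sigma_{i+1}>\sigma_i$ this gives $\delta(X_{i+1})\leq\delta(X_i)$. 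The hard part, namely upgrading this to a \emph{strict} inequality, I would handle by observing that equality throughout would force $b(X_{i+1})=b(X_i)$, hence $f(\sigma_{i+1})=b(X_{i+1})+\sigma_{i+1}\delta(X_{i+1})=0$, contradicting the assumption that the algorithm performs another iteration.

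Finally, since $\delta(X_i)\in\{1,2,\dots,m\}$ whenever the algorithm has not yet terminated, the strictly decreasing integer sequence $\delta(X_1)>\delta(X_2)>\cdots$ has length at most $m$, which yields the claimed $\mathcal{O}(m)$ bound.
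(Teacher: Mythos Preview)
Your proof is correct and follows exactly the approach the paper sketches: the paper merely asserts that ``$\delta(X_i)$ strictly decreases in every iteration and a standard argument shows that the final set $X_i$ indeed maximizes the value $\frac{-b(X)}{\delta(X)}$'' and then states the theorem without further detail. You have supplied precisely those missing details --- the supporting-line argument for correctness and monotonicity of $\sigma_i$, and the swap inequality plus the ``equality forces termination'' step for the strict decrease of $\delta(X_i)$ --- so there is nothing to add.
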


\begin{algorithm}[!ht]
  \begin{algorithmic}[1]
    \State{} Let $\sigma_1 :=0$.
    \State{} $i:=1$
    \Loop{}
    \State{} Let $X_i:=\argmin\{b(X)+\sigma_i\delta(X) : X\subseteq V\}$
    \If {$b(X_i)+\sigma_i\delta(X_i) \geq 0$}
    \State{} RETURN $\sigma_i, X_i$
    \ElsIf{$\delta(X_i) = 0$}
    \State{} RETURN ``INFEASIBLE''
    \Else{} 
    \State{} $\sigma_{i+1}:=\frac{-b(X_i)}{\delta(X_i)}$
    \EndIf{}
    \State{} $i\longleftarrow i+1$
    \EndLoop{}
  \end{algorithmic}
  \caption{Minimum spread calculation in Eulerian graphs}\label{alg:nulla}
\end{algorithm}

\subsection{Non-Eulerian Graphs}

Now, let us consider the Balanced Submodular Flow Problem in
non-Eulerian graphs. Let $G=(V,A)$ be a directed graph assuming that
there exists a node $v\in V$ such that $\partial({v})\neq 0$.  First,
we analyze a simpler algorithm and then improve this
algorithm. Furthermore, a dual characterization of the minimum spread
submodular flows is also given. More precisely, the following theorem
will be proved.

\begin{thm}\label{thm:eqxy}
  Assume that $G$ is not Eulerian. Then
  \begin{equation}
  \label{eq:eqxy}
    \sigma^* = \max_{X,Y\subseteq V} \left\{
      \sigma(X,Y) :
      \partial(X)\geq 0, \partial(Y)< 0, \varrho(X)+\delta(X)>0 \right\},
  \end{equation}
  where
  \begin{equation}
    \label{eq:sigmaxy}
    \sigma(X,Y):= \frac{b(X)\partial(Y)-b(Y)\partial(X)}
      {\delta(Y)\partial(X)-\delta(X)\partial(Y)}
  \end{equation}
  or $\sigma^*=0$ if the value of this maximum is negative. 
\end{thm}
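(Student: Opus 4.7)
The plan is to reduce the Balanced Submodular Flow Problem to a one-parameter feasibility question and then to a collection of pairwise lower bounds on $\sigma$. For any $\sigma\geq 0$, a submodular flow of spread at most $\sigma$ exists if and only if there is some real $\alpha$ for which the box $l:=\alpha\egy$, $u:=(\alpha+\sigma)\egy$ admits a submodular flow. By Theorem~\ref{thm:submodflow} this is equivalent to
\[
\alpha\,\partial(X)\;-\;\sigma\,\delta(X)\;\leq\;b(X) \qquad \text{for every } X\subseteq V,
\]
so $\sigma^*$ is the smallest nonnegative $\sigma$ for which such an $\alpha$ exists.

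I would then sort these inequalities by the sign of $\partial(X)$. Sets with $\partial(X)>0$ upper-bound $\alpha$ by $(b(X)+\sigma\delta(X))/\partial(X)$; sets with $\partial(X)<0$ lower-bound $\alpha$ by the analogous expression (the inequality flips when dividing by a negative); and sets with $\partial(X)=0$ impose the purely $\sigma$-side constraint $\sigma\delta(X)\geq -b(X)$. Hence $\alpha$ exists exactly when every lower bound is dominated by every upper bound and all $\partial=0$ constraints hold. For a pair $X,Y$ with $\partial(X)>0$, $\partial(Y)<0$, clearing denominators in
\[
\frac{b(Y)+\sigma\delta(Y)}{\partial(Y)} \;\leq\; \frac{b(X)+\sigma\delta(X)}{\partial(X)}
\]
(and remembering that multiplying by $\partial(Y)<0$ reverses the inequality) yields $\sigma\bigl(\delta(Y)\partial(X)-\delta(X)\partial(Y)\bigr)\geq b(X)\partial(Y)-b(Y)\partial(X)$. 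Since $\partial(Y)<0$ forces $\delta(Y)>0$, the coefficient of $\sigma$ is strictly positive, and the inequality rewrites precisely as $\sigma\geq\sigma(X,Y)$.

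Finally, I would absorb the $\partial(X)=0$ constraints into the same formula. If $\partial(X)=0$ and $X$ has no incident edge then the constraint is vacuous and such an $X$ is excluded by $\varrho(X)+\delta(X)>0$; otherwise $\delta(X)>0$, and plugging such an $X$ together with any $Y$ having $\partial(Y)<0$ into $\sigma(X,Y)$ collapses the expression to $-b(X)/\delta(X)$, exactly matching the constraint $\sigma\delta(X)\geq -b(X)$. Such a $Y$ exists because $G$ is non-Eulerian and $\sum_v\partial(v)=0$ forces some vertex with negative boundary. The side condition $\varrho(X)+\delta(X)>0$ is precisely what makes the denominator of $\sigma(X,Y)$, a sum of two nonnegative terms, strictly positive across the whole optimization range. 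Combining these observations yields $\sigma^*=\max\{0,\max_{X,Y}\sigma(X,Y)\}$ over the admissible pairs, the maximum being attained because $\P(V)$ is finite. The main subtlety I anticipate is the sign bookkeeping when clearing denominators and the unified treatment of the boundary case $\partial(X)=0$; once those are in place, the identity follows by direct algebra.
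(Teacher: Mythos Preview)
Your argument is correct and is genuinely different from the paper's. You characterize, for each fixed $\sigma\ge 0$, the feasibility of some $\alpha$ via the elementary ``every lower bound is at most every upper bound'' criterion for a one-variable system of linear inequalities; this converts the pairwise compatibility conditions directly into $\sigma\ge\sigma(X,Y)$, and the $\partial(X)=0$ constraints fold into the same formula. The only small imprecision is calling the $\partial(X)=\varrho(X)=\delta(X)=0$ constraint ``vacuous'': it reads $b(X)\ge 0$, which is not vacuous but is exactly the standing feasibility assumption made before Theorem~\ref{thm:eqnulla}.

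The paper instead first proves $\sigma^*\ge\sigma(X,Y)$ as a separate lemma, then takes a maximizing pair $(X,Y)$ (breaking ties by maximizing the auxiliary value $\kappa(X,Y)$), sets $\kappa^*:=\kappa(X,Y)$, and shows by contradiction that a $(\kappa^*\egy,(\kappa^*+\sigma(X,Y))\egy)$-bounded submodular flow exists: a violating set $C$ is shown, through a page of algebra, to yield a pair with strictly larger $\sigma$ or, in the boundary case $\partial(X)=0$, strictly larger $\kappa$. Your route is considerably shorter and avoids this computation entirely. What the paper's argument buys is that it is constructive in a way that immediately drives Algorithm~\ref{alg:xy}: the violating set $C$ tells you exactly how to update $(X,Y)$, which is the core of the subsequent running-time analysis. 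Your proof establishes the min--max identity cleanly but does not by itself suggest that iterative improvement.
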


Note that the condition $\varrho(X)+\delta(X)>0$ in~\eqref{eq:eqxy}
is equivalent to $\delta(Y)\partial(X)-\delta(X)\partial(Y)\neq 0$,
assuming that $\partial(X)\geq 0, \partial(Y)< 0$ hold for
$X,Y$. Therefore, the expression in~\eqref{eq:eqxy} is well-defined.

In order to prove Theorem~\ref{thm:eqxy}, we first show that the
expression above constitutes a lower bound of $\sigma^*$ for any pairs
of sets $X$ and $Y$.

\begin{lem}\label{thm:lbxy}
  Let $X,Y\subseteq V$ such that $\partial(X)\geq 0$,
  $\partial(Y) < 0$ and $\varrho(X)+\delta(X)>0$. Then
  \begin{equation}
    \label{eq:lbxy}
    \sigma^*\geq \sigma(X, Y).
  \end{equation}
\end{lem}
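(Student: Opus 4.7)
The plan is to take an optimal submodular flow $x^*$ realizing the spread $\sigma^*$, set $m^* := \min_{a \in A} x^*(a)$ so that every edge value lies in $[m^*,\, m^* + \sigma^*]$, and exploit the obvious bounds $x^*(e) \geq m^*$ on entering edges together with $x^*(e) \leq m^* + \sigma^*$ on leaving edges. For any $Z \subseteq V$ this yields
\begin{equation*}
\partial_{x^*}(Z) \;\geq\; m^*\varrho(Z) - (m^* + \sigma^*)\delta(Z) \;=\; m^*\partial(Z) - \sigma^*\delta(Z),
\end{equation*}
which, combined with the submodular flow constraint $\partial_{x^*}(Z) \leq b(Z)$, produces a single master inequality
\begin{equation*}
m^*\partial(Z) - \sigma^*\delta(Z) \;\leq\; b(Z) \qquad \text{for every } Z \subseteq V.
\end{equation*}
Note that no assumption on the sign of $m^*$ is needed for these bounds, since $\varrho(Z)$ and $\delta(Z)$ are just non-negative edge counts.

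Next, I would apply the master inequality to $Z = X$ and $Z = Y$ and eliminate the unknown $m^*$ by an LP-duality style combination. Since $\partial(X) \geq 0$ and $\partial(Y) < 0$, scaling the $X$-inequality by $-\partial(Y) > 0$ and the $Y$-inequality by $\partial(X) \geq 0$ makes the $m^*$-coefficients cancel, and after rearrangement one obtains
\begin{equation*}
\sigma^*\bigl(\delta(Y)\partial(X) - \delta(X)\partial(Y)\bigr) \;\geq\; b(X)\partial(Y) - b(Y)\partial(X).
\end{equation*}
Dividing by the denominator then gives exactly $\sigma^* \geq \sigma(X,Y)$.

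The main technical point, and the place where the hypothesis $\varrho(X) + \delta(X) > 0$ is needed, is that the denominator must be shown strictly positive. The inequality $\partial(Y) < 0$ forces $\delta(Y) > \varrho(Y) \geq 0$, so $\delta(Y) > 0$; both terms $\delta(Y)\partial(X)$ and $-\delta(X)\partial(Y)$ are then non-negative, and they cannot both vanish because $\partial(X) = 0$ combined with $\varrho(X) + \delta(X) > 0$ forces $\delta(X) > 0$. The degenerate case $\partial(X) = 0$ is also the only place where the combined argument collapses (the multiplier $\partial(X)$ becomes $0$); there I would invoke the master inequality for $X$ alone, which reads $-\sigma^*\delta(X) \leq b(X)$, and observe that $\sigma(X,Y)$ simplifies to $-b(X)/\delta(X)$, giving the same conclusion. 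Beyond careful sign tracking I do not foresee any further obstacle; the whole argument is essentially a two-constraint LP weak-duality computation.
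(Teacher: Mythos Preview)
Your proof is correct and follows essentially the same approach as the paper: pick a lower bound $\kappa$ (your $m^*$) on the optimal flow values, derive the inequality $\kappa\,\partial(Z) - \sigma^*\delta(Z) \leq b(Z)$ for every $Z$ (the paper obtains this by citing the easy direction of Theorem~\ref{thm:submodflow}, you derive it directly from the edge-wise bounds), and then eliminate $\kappa$ by taking the same non-negative combination of the two instances $Z=X$ and $Z=Y$. Your separate treatment of the case $\partial(X)=0$ is harmless but unnecessary: the combined inequality is still a valid non-negative combination (the $Y$-contribution simply vanishes), and dividing by the still-positive denominator $-\delta(X)\partial(Y)$ yields $\sigma^*\geq -b(X)/\delta(X)=\sigma(X,Y)$ directly.
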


\begin{proof}
  By the definition of $x^*$, there exists a real value $\kappa$ such that
  $\kappa\egy\leq x^*\leq (\kappa+\sigma^*)\egy$.  Using
  Theorem~\ref{thm:submodflow} with $l:=\kappa\egy$ and
  $u:=(\kappa+\sigma^*)\egy$ we get

  \begin{equation}
    \kappa\partial(X) - \sigma^*\delta(X) \leq b(X)
  \end{equation}
  and
  \begin{equation}
    \kappa\partial(Y) - \sigma^*\delta(Y) \leq b(Y).
  \end{equation}
  From these, we get
  \begin{equation}
    b(X)\partial(Y) + \sigma^*\delta(X)\partial(Y)
    \leq \kappa\partial(X)\partial(Y)
    \leq b(Y)\partial(X)+\sigma^*\delta(Y)\partial(X),
  \end{equation}
  therefore
  \begin{equation}
    \sigma^*\big(\delta(Y)\partial(X)-\delta(X)\partial(Y)\big)
    \geq b(X)\partial(Y) - b(Y)\partial(X).
  \end{equation}
  Since $\delta(Y)\partial(X)-\delta(X)\partial(Y)>0$, the equation
  above is equivalent to
  \begin{equation}
    \sigma^*
    \geq
    \frac{b(X)\partial(Y) - b(Y)\partial(X)}
    {\delta(Y)\partial(X)-\delta(X)\partial(Y)}
    = \sigma(X, Y).
   \end{equation}
\end{proof}

\begin{proof}[Proof of Theorem~\ref{thm:eqxy}]
 Let
  \begin{equation}
    \label{eq:kappaxy}
    \displaystyle \kappa(X,Y) := \frac{b(X)\delta(Y)-b(Y)\delta(X)}  
    {\delta(Y)\partial(X)-\delta(X)\partial(Y)}
  \end{equation}
  and let $X$ and $Y$ be a pair of sets maximizing $(\ref{eq:eqxy})$
  and among them the one maximizing
  $\kappa^* = \displaystyle \kappa(X,Y)$.  We show that there exists a
  bounded submodular flow such that
  $\kappa^*\egy \leq x \leq (\kappa^*+\sigma^*)\egy$.
  
  Suppose to the contrary, assume that such a flow does not exist. Then,
  Theorem~\ref{thm:submodflow} states that a set
  $C\subseteq V$ with the property
  \begin{equation*}
    \kappa^*\partial(C)-\sigma^*\delta(C)>b(C)
  \end{equation*}
  must exist.  Note that $\varrho(X)+\delta(X)>0$, since otherwise
  $\kappa(X,Y)=0$ and therefore $b(C)<0$ would hold, thus the
  problem would be infeasible.

  If $\partial(C)<0$ and $\partial(X)>0$, we get
  \begin{align*}
    &\big(b(X)\partial(Y)-b(Y)\partial(X)\big)
      \big(\delta(C)\partial(X)-\delta(X)\partial(C)\big) \\
    =\quad &b(X)\partial(Y)\delta(C)\partial(X)-
             b(X)\partial(Y)\delta(X)\partial(C)-\\
    &b(Y)\partial(X)\delta(C)\partial(X)+b(Y)\partial(X)\delta(X)\partial(C)\\
    =\quad & -b(X)\partial(Y)\delta(X)\partial(C)+
             \partial(X)\big(b(X)\partial(Y)\delta(C)-\\
    &b(Y)\delta(C)\partial(X)+b(Y)\delta(X)\partial(C)\big)\\
    =\quad & -b(X)\partial(Y)\delta(X) \partial(C)+
             \partial(X)\big(\delta(C)\big(b(X)\partial(Y)-\\
    &b(Y)\partial(X)\big)+\partial(C)\big(b(Y)\delta(X)\big)\big) \\
    =\quad & \partial(X)\big(\delta(C)\big(b(X)\partial(Y)-
             b(Y)\partial(X)\big)-\partial(C)
             \big(b(X)\delta(Y)-b(Y)\delta(X)\big)\big) \\
    &-b(X)\partial(Y)\delta(X) \partial(C)+
      \partial(X)\partial(C)b(X)\delta(Y)\\
    =\quad & \big(\delta(Y)\partial(X)-\delta(X)\partial(Y)\big)
             \partial(X)\big(\delta(C)\sigma-\partial(C)\kappa\big)-\\
    &b(X)\partial(Y)\delta(X) \partial(C)+
      \partial(X)\partial(C)b(X)\delta(Y)\\
    <\quad & \big(\delta(Y)\partial(X)
             -\delta(X)\partial(Y)\big)\partial(X)(-b(C))\\
    & -b(X)\partial(Y)\delta(X) \partial(C)
      +\partial(X)\partial(C)b(X)\delta(Y)\\
    =\quad & \big(\delta(Y)\partial(X)-\delta(X)\partial(Y)\big)
             \big(b(X)\partial(C)-\partial(X)b(C)\big),
  \end{align*}
  therefore
  \begin{equation*}
    \sigma(X,Y)<\sigma(X,C),
  \end{equation*}
  contradicting the maximizing property of $X$ and $Y$.

  By the same token, if $\partial(C)\geq 0$
  we get that 
  \begin{equation*}
    \sigma(X,Y)<\sigma(C, Y).
  \end{equation*}
  Finally, if $\partial(C)<0$ and $\partial(X)=0$, we get
  \begin{equation*}
      \kappa(X,Y)<\kappa(X,C)
  \end{equation*}
  in the same way, which finishes the proof.
\end{proof}

\subsection{Algorithm for non-Eulerian Graphs}

\begin{algorithm}[!ht]
\begin{algorithmic}[1]
  \State{} Choose $X_1,Y_1\subseteq V$ such that
  $\partial(X_1)\geq0$ and $\partial(Y_1)<0$.\label{algline:initial}
  \State{} $i:=1$
  \Loop{}
  \State{}
  $\displaystyle \sigma_i:= \sigma(X_i, Y_i)$\label{algline:sigma_i}
  \State{}
  $\displaystyle \kappa_i:= \kappa(X_i, Y_i)$
  \State{} $C_i \in \argmin\{b(X)+\sigma_i\delta(X)-\kappa_i\partial(X) :
  X\subseteq V\}$\label{algline:xy:compc}
  \If {$b(C_i)+\sigma_i\delta(C_i)-\kappa_i\partial(C_i) \geq 0$}
  \State{} RETURN $\kappa_i, \sigma_i, X_i, Y_i$\label{algline:xy:opt}
  \ElsIf {$\varrho(C_i)+\delta(C_i)=0$}
  \State{} RETURN ``INFEASIBE''\label{algline:xy:infeas}
  \ElsIf  {$\partial(C_i)\geq 0$}
  \State{} $X_{i+1}:=C_i$
  \State{} $Y_{i+1}:=Y_i$
  \Else{} 
  \State{} $X_{i+1}:=X_i$
  \State{} $Y_{i+1}:=C_i$
  \EndIf{} 
  \State{} $i\longleftarrow i+1$
  \EndLoop{}
\end{algorithmic}
\caption{Minimum spread calculation in non-Eulerian graphs}\label{alg:xy}
\end{algorithm}

The proof of Theorem~\ref{thm:eqxy}, immediately provides the
algorithm outlined in Algorithm~\ref{alg:xy} for finding an optimal
pair of sets $X$ and $Y$.

The initial sets $X_1$ and $Y_1$ can be chosen as single node sets
satisfying conditions in line~\ref{algline:initial}.

Computing $C_i$ in line~\ref{algline:xy:compc} involves in the
minimization of the submodular function
$b(X)+\sigma_i\delta(X)-\kappa_i\partial(X)$.  If the algorithm terminates
at line\ \ref{algline:xy:opt}, then the last value of the $\sigma_i$
is optimal and $X_i, Y_i$ are maximizing sets in Theorem
$\ref{thm:eqxy}$.

Note, that $\partial(X_i)\geq 0$ and $\partial(Y_i)<0$ hold in every
iteration, therefore the denominator of $\sigma_i$ and $\kappa_i$ are
not zero.

The algorithm is clearly finite because the value $\sigma_i$ is
monotonically increasing throughout the execution, and either the value
of $\sigma_i$ or $\kappa_i$ strictly increases in each iteration.

In the following, we will show that not only is Algorithm~\ref{alg:xy}
finite, but in fact it runs in strongly polynomial time. Let us
consider two sets $Z_1, Z_2\subseteq V$ \emph{equivalent} if
$\delta(Z_1)=\delta(Z_2)$, $\varrho(Z_1)=\varrho(Z_2)$ and
$b(Z_1)=b(Z_2)$.
\begin{lem}\label{thm:noneqnum}
  The algorithm computes at most ${(m+1)}^2$ non-equivalent sets.
\end{lem}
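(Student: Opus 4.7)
The approach is to use the fact that the number of pairs $(\delta(X),\varrho(X))$ with $X\subseteq V$ is bounded: since $\delta(X),\varrho(X)$ are each nonnegative integers not exceeding $m=|A|$, there are at most $(m+1)^2$ such pairs. So it suffices to prove that if $C_i$ and $C_j$ are two sets produced by Algorithm~\ref{alg:xy} with the same $\delta$ and $\varrho$ values, then they are in fact equivalent, i.e.\ they agree on $b$ as well. Once this is established, the equivalence classes among $C_1,C_2,\dots$ are in injective correspondence with the pairs $(\delta(C_i),\varrho(C_i))$, giving the desired bound.

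The crux is thus to show that the $b$-value of the computed sets is forced by the $(\delta,\varrho)$ pair. For this I would use the minimality property on line~\ref{algline:xy:compc}: the set $C_i$ minimizes $b(X)+\sigma_i\delta(X)-\kappa_i\partial(X)$ over all $X\subseteq V$. Applied to the alternative $X:=C_j$, this yields
\begin{equation*}
  b(C_i)+\sigma_i\delta(C_i)-\kappa_i\partial(C_i)
  \;\leq\;
  b(C_j)+\sigma_i\delta(C_j)-\kappa_i\partial(C_j).
\end{equation*}
Assuming $\delta(C_i)=\delta(C_j)$ and $\varrho(C_i)=\varrho(C_j)$, we also have $\partial(C_i)=\partial(C_j)$, so the inequality collapses to $b(C_i)\leq b(C_j)$. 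The symmetric argument at iteration $j$, using the minimality of $C_j$ with the alternative $C_i$, gives $b(C_j)\leq b(C_i)$. Hence $b(C_i)=b(C_j)$ and the two sets are equivalent in the sense introduced above.

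Combining the two observations finishes the proof: each equivalence class of computed sets is uniquely identified by its pair $(\delta,\varrho)\in\{0,1,\dots,m\}^2$, hence their number is at most $(m+1)^2$. I do not expect any real obstacle here; the argument is purely a consequence of the fact that $C_i$ is a \emph{global} optimum of the submodular function minimized in the $i$-th iteration, so every previously or subsequently generated $C_j$ is an admissible competitor in both directions.
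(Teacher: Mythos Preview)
Your argument is correct and in fact more direct than the paper's. Both proofs establish the same implication --- if two computed sets share the same $(\delta,\varrho)$ pair then they must be equivalent --- but the paper arrives there via a case distinction on whether $\delta(C_1)=\delta(C_2)$ is zero or positive, rewriting the minimized objective in fractional form (as $\varrho(X)\big(b(X)/\varrho(X)-\kappa_i\big)$ in the first case and as $\delta(X)\big((\kappa_i\partial(X)-b(X))/\delta(X)-\sigma_i\big)$ in the second) and then comparing these fractions across the two iterations to force $\varrho(C_1)\neq\varrho(C_2)$. Your two-line sandwich from global optimality at iterations $i$ and $j$ bypasses all of this: once $\delta$ and $\partial$ agree, the linear terms in the objective cancel identically regardless of the values of $\sigma_i,\kappa_i,\sigma_j,\kappa_j$, and only $b$ survives, immediately giving $b(C_i)=b(C_j)$. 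The paper's route does extract a little extra structure (a strict ordering of the $\varrho$-values when the $b$-values differ), but that information is never used elsewhere, so your approach is a genuine simplification.
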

\begin{proof}
  The lemma is proved by showing that if $C_1$ and $C_2$ are
  non-equivalent sets found by the algorithm and
  $\delta(C_1)=\delta(C_2)$ then $\varrho(C_1)\neq \varrho(C_2)$.

  If $b(C_1)=b(C_2)$, then
  $\varrho(Z_1)\neq\varrho(Z_2)$, because they are
  non-equivalent. Therefore, we may assume that $b(C_1)\neq b(C_2)$.

  First, let us consider the case when
  $\delta(C_1)=\delta(C_2)=0$. Then $\varrho(C_1) \neq 0$ and
  $\varrho(C_2) \neq 0$, otherwise the problem would be infeasible.
  Thus, the expression in line~\ref{algline:xy:compc} can be rewritten
  as
\begin{equation} \label{eq:argmin} \argmin
  \{b(X)+\sigma_i\delta(X)-\kappa_i\partial(X)\}= \argmin
  \bigg\{\varrho(x)\left(\frac{b(X)}{\varrho(X)}-\kappa_i\right)
  \bigg\}.
\end{equation}
By re-indexing we may assume that
$\frac{b(C_1)}{\varrho(C_1)} > \frac{b(C_2)}{\varrho(C_2)}$.
(If these two values are equal then the algorithm will only
find the set with the greater $\varrho$ value.)  Since $C_{1}$
minimizes the right hand side of (\ref{eq:argmin}) for
$\kappa_{1}$, we have $\varrho(C_{i_1}) < \varrho(C_{i_2})$.

Now assume that $\delta(C_1)=\delta(C_2)>0$ and $b(C_1)>b(C_2)$. Then
the expression in line~\ref{algline:xy:compc} can be rewritten as
\begin{equation} \label{eq:argmax} \argmin
  \{b(X)+\sigma_1\delta(X)-\kappa_1\partial(X)\}= \argmax
  \bigg\{
  \delta(x)\left(\frac{\kappa_1\partial(X)-b(X)}{\delta(X)}-\sigma_1\right)
  \bigg\}.
\end{equation}

Since $\delta(C_1)=\delta(C_2)$ and $C_{1}$ maximizes the right hand side of
(\ref{eq:argmax}) for $\kappa_{1}, \sigma_{1}$, then
\begin{equation*}
  \frac{\kappa_1\partial(C_{1})-b(C_{1})}{\delta(C_{1})}
  \geq \frac{\kappa_1\partial(C_{2})-b(C_{2})}{\delta(C_{2})},
\end{equation*}
from which we get that
$\frac{\partial(C_1)}{\delta(C_{1})} >
\frac{\partial(C_2)}{\delta(C_{2})}$. Thus
$\varrho(C_1)\neq \varrho(C_2)$, which finishes the proof.
\end{proof}

\begin{lem}\label{thm:noneqpair}
  Assume that $X_i$ and $X_j$ are equivalent for some $i\neq j$. Then
  $Y_i$ and $Y_j$ are not equivalent.
\end{lem}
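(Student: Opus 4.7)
The plan is to combine the equivalence hypothesis with the monotonicity observations recorded just before the lemma, namely that $\sigma_i$ is weakly monotonically increasing and that in every iteration at least one of $\sigma_i$, $\kappa_i$ strictly increases. The crucial first remark I would make is that the expressions
\[
  \sigma(X,Y)=\frac{b(X)\partial(Y)-b(Y)\partial(X)}{\delta(Y)\partial(X)-\delta(X)\partial(Y)}
  \quad\text{and}\quad
  \kappa(X,Y)=\frac{b(X)\delta(Y)-b(Y)\delta(X)}{\delta(Y)\partial(X)-\delta(X)\partial(Y)}
\]
depend on $X$ and $Y$ only through the triples $\bigl(b(X),\delta(X),\varrho(X)\bigr)$ and $\bigl(b(Y),\delta(Y),\varrho(Y)\bigr)$, since $\partial=\varrho-\delta$. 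Hence replacing $X$ or $Y$ by an equivalent set leaves both $\sigma$ and $\kappa$ unchanged.

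Given this invariance, I would argue by contradiction: suppose $X_i$ is equivalent to $X_j$ and $Y_i$ is equivalent to $Y_j$ for some $i<j$. The remark above yields $\sigma_i=\sigma_j$ and $\kappa_i=\kappa_j$. From $\sigma_i\leq \sigma_{i+1}\leq\dots\leq \sigma_j$ with equal endpoints, the whole chain must be constant, so $\sigma$ does not strictly increase at any step between iterations $i$ and $j$. The dichotomy then forces $\kappa$ to strictly increase at each of these $j-i$ steps, producing $\kappa_i<\kappa_{i+1}<\dots<\kappa_j$ and contradicting $\kappa_i=\kappa_j$.

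I do not anticipate a real obstacle here; the proof is essentially a one-liner once the invariance of $\sigma$ and $\kappa$ under the equivalence relation is observed. The only subtle point worth spelling out is that the monotonicity statement must be read in the weak sense (``$\sigma_i$ is weakly increasing, but at every step either $\sigma_i$ or $\kappa_i$ strictly increases''), which is how it is phrased in the paragraph preceding the lemma and is itself a direct consequence of the sign-of-$\partial(C_i)$ case analysis inside the proof of Theorem~\ref{thm:eqxy}.
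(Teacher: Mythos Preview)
Your argument is correct and is essentially the paper's own proof: the paper likewise observes that if both $X_i\sim X_j$ and $Y_i\sim Y_j$ then $\sigma_i=\sigma_j$ and $\kappa_i=\kappa_j$, and derives a contradiction from the fact that $\sigma_i$ is monotone and that at least one of $\sigma_i,\kappa_i$ strictly increases at each step. You simply spell out the invariance of $\sigma,\kappa$ under the equivalence relation and the chain-of-inequalities reasoning a bit more explicitly than the paper does.
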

\begin{proof}
  If $X_i$ is equivalent to $X_j$ and $Y_i$ is equivalent to $Y_j$,
  then $\sigma_i=\sigma_j$ and $\kappa_i=\kappa_j$ hold.  But --- as
  we have seen in the proof of Theorem~\ref{thm:eqxy} --- the
  $\sigma_i$ is monotonically increasing, and either $\sigma_i$ or
  $\kappa_i$ strictly increases at each iteration.
\end{proof}

Combining Lemma~\ref{thm:noneqnum} and \ref{thm:noneqpair}, we
immediately get the following upper bound on the number of iterations.

\begin{thm}
  Algorithm~\ref{alg:xy} terminates after at most $\mathcal{O}(m^4)$
  iterations. \qedwhite{}
\end{thm}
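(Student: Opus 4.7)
The plan is simply to combine the two preceding lemmas. By Lemma~\ref{thm:noneqnum}, the sets encountered by the algorithm (the initial $X_1$ and $Y_1$, together with all sets $C_i$ produced in line~\ref{algline:xy:compc}) fall into at most ${(m+1)}^2$ equivalence classes under the relation that declares $Z_1\sim Z_2$ when $\delta(Z_1)=\delta(Z_2)$, $\varrho(Z_1)=\varrho(Z_2)$ and $b(Z_1)=b(Z_2)$. Since at every iteration both $X_i$ and $Y_i$ are members of this pool, each pair $(X_i,Y_i)$ determines a pair of equivalence classes, of which there are at most ${(m+1)}^2\cdot {(m+1)}^2={(m+1)}^4$.

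Next I would invoke Lemma~\ref{thm:noneqpair} to argue that no pair of equivalence classes can be hit twice: if two iterations $i\neq j$ produced pairs $(X_i,Y_i)$ and $(X_j,Y_j)$ with $X_i\sim X_j$ and $Y_i\sim Y_j$, then $\sigma_i=\sigma_j$ and $\kappa_i=\kappa_j$, contradicting the strict monotone progress of $(\sigma_i,\kappa_i)$ established in the proof of Theorem~\ref{thm:eqxy}. Hence each equivalence-class pair appears in at most one iteration.

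Combining these two observations, the total number of iterations is bounded by the number of admissible equivalence-class pairs, namely ${(m+1)}^4=\mathcal{O}(m^4)$, which is exactly the claimed bound. No step here is truly delicate; the only thing to be careful about is to note that both components $X_i$ and $Y_i$ are drawn from the same pool counted by Lemma~\ref{thm:noneqnum}, so that lemma can be applied to both coordinates simultaneously.
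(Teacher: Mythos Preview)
Your argument is correct and is exactly the approach the paper takes: it states the theorem as an immediate consequence of combining Lemma~\ref{thm:noneqnum} and Lemma~\ref{thm:noneqpair}. The only microscopic quibble is that Lemma~\ref{thm:noneqnum} literally counts the sets $C_i$ produced in line~\ref{algline:xy:compc}, not the initial $X_1,Y_1$ chosen in line~\ref{algline:initial}; but those add at most two extra equivalence classes, so the $\mathcal{O}(m^4)$ bound is unaffected.
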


The running time of each single iteration is dominated by the submodular
function minimization, therefore we get the following.
\begin{thm}
  Algorithm~\ref{alg:xy} computes $\sigma^*$ along with the maximizing
  sets $X$ and $Y$ in $\mathcal{O}(m^4\Upsilon)$ time.
  \qedwhite{}
\end{thm}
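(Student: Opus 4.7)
The plan is to combine the iteration-count bound from the immediately preceding theorem with an analysis of the cost of a single iteration, and to verify that upon termination the algorithm indeed returns the optimal spread and a maximizing pair.

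First I would observe that each iteration of Algorithm~\ref{alg:xy} consists of: (i) computing $\sigma_i$ and $\kappa_i$ from the graph-theoretic quantities $b(X_i), b(Y_i), \delta(X_i), \delta(Y_i), \partial(X_i), \partial(Y_i)$, which costs $\mathcal{O}(1)$ arithmetic operations once these values are cached; (ii) minimizing the set function $b(X)+\sigma_i\delta(X)-\kappa_i\partial(X)$ in line~\ref{algline:xy:compc}, which is submodular since $b$ is submodular and both $\delta$ and $-\partial=\delta-\varrho$ arise as $\delta_x$ for a nonnegative vector; and (iii) a constant-time case distinction that updates either $X_{i+1}$ or $Y_{i+1}$. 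Step (ii) dominates and takes $\Upsilon$ time by definition of $\Upsilon$.

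Next I would invoke the preceding theorem, which states that the total number of iterations is $\mathcal{O}(m^4)$. Multiplying the per-iteration bound by this count gives the overall running time of $\mathcal{O}(m^4\Upsilon)$, as required.

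For correctness of the returned values, I would argue as follows. Throughout the execution we maintain $\partial(X_i)\geq 0$ and $\partial(Y_i)<0$, and $\varrho(X_i)+\delta(X_i)>0$, so the pair $(X_i,Y_i)$ is feasible for the maximization in~(\ref{eq:eqxy}); hence $\sigma_i=\sigma(X_i,Y_i)\leq\sigma^*$ by Lemma~\ref{thm:lbxy}. When the algorithm terminates at line~\ref{algline:xy:opt}, the stopping condition
\[
b(C)+\sigma_i\delta(C)-\kappa_i\partial(C)\geq 0\quad\forall C\subseteq V
\]
is precisely the condition of Theorem~\ref{thm:submodflow} applied with $l:=\kappa_i\egy$ and $u:=(\kappa_i+\sigma_i)\egy$, and therefore certifies the existence of a submodular flow of spread at most $\sigma_i$. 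This yields $\sigma^*\leq\sigma_i$, so in fact $\sigma^*=\sigma_i$, and $(X_i,Y_i)$ is a maximizing pair in Theorem~\ref{thm:eqxy}. If instead the algorithm exits at line~\ref{algline:xy:infeas}, the set $C_i$ satisfies $\varrho(C_i)=\delta(C_i)=0$ and $b(C_i)<0$, which by our standing feasibility assumption cannot happen, so this branch correctly reports infeasibility.

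The main (and essentially only) obstacle here is the iteration bound, which has already been established via Lemmas~\ref{thm:noneqnum} and~\ref{thm:noneqpair}; the remaining work is the bookkeeping above, so the statement follows.
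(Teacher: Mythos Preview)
Your proposal is correct and follows exactly the paper's approach: the paper's own argument is the single sentence ``The running time of each single iteration is dominated by the submodular function minimization,'' combined with the preceding $\mathcal{O}(m^4)$ iteration bound, and your additional correctness check simply unpacks what was already established around Theorem~\ref{thm:eqxy}. One small imprecision worth fixing: $-\partial=\delta-\varrho$ does \emph{not} arise as $\delta_x$ for a nonnegative $x$; the right reason the objective in line~\ref{algline:xy:compc} is submodular is that $\partial$ is \emph{modular} (it equals $\sum_{v\in X}(\varrho(v)-\delta(v))$), so $-\kappa_i\partial$ is submodular for any sign of $\kappa_i$, and $\sigma_i\delta$ is submodular once $\sigma_i\geq 0$.
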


\subsection{Improved Algorithm for Non-Eulerian Graphs}

This section presents an $\mathcal{O}(m^2\Upsilon)$ time algorithm for
finding the minimum spread and the corresponding dual pair of sets $X$
and $Y$.

\begin{algorithm}[!ht]
\begin{algorithmic}[1]
  \State{}Choose $X_1,Y_1\subseteq V$ such that
  $\partial(X_1)\geq 0$ and $\partial(Y_1)<0$
  \State{}$i:=1,\quad j:=1$
  \Loop{}
  \State{}$\sigma_i:= \sigma(X_i,Y_j)$
  \State{}$\kappa_i:= \kappa(X_i,Y_j)$
  \State{}$C \in \argmin\{b(X)+\sigma\delta(X)-\kappa\partial(X) :
  X\subseteq V\}$\label{algline:xy2:compc}
  \If{$b(C)+\sigma_i\delta(C)-\kappa_i\partial(C) \geq 0$}
  \State{}RETURN $\kappa_i, \sigma_i, X_i, Y_j$\label{algline:xy2:opt}
  \ElsIf{$\varrho(C)+\delta(C)=0$}
  \State{}RETURN ``INFEASIBLE''\label{algline:xy2:infeas}
  \ElsIf{$\delta(C)=0$}\label{algline:xy2:delta0}
  \State{}$X_1=C$ and $i:=1$ 
  \While{$\sigma(C,Y_j)\leq\sigma(C,Y_{j-1})$}
  \State{}$j := j-1$
  \EndWhile{}
  \Else{}
  \While{$\delta(X_i)=0$ or $\frac{\partial(X_i)}{\delta(X_i)}\leq\frac{\partial(C)}{\delta(C)}$
    or $\kappa(C,X_i)\geq\kappa(C,X_{i-1})$}
  \State{}$i := i-1$
  \EndWhile{}
  \While{$\frac{\partial(Y_J)}{\delta(Y_j)}\geq\frac{\partial(C)}{\delta(C)}$
    or $\kappa(C,Y_j)\leq\kappa(C,Y_{j-1})$}
  \State{}$j := j-1$
  \EndWhile{}
  \If{$\partial(C)\geq 0$}
  \State{}$X_{i+1}:=C$
  \State{}$i := i+1$
  \Else{} 
  \State{}$Y_{j+1}:=C$
  \State{}$j := j+1$
  \EndIf{} 
  \EndIf{} 
  \EndLoop{}
\end{algorithmic}
\caption{Minimum spread calculation}\label{alg:xy2}
\end{algorithm}

The improved procedure is outlined in Algorithm~\ref{alg:xy2}. The
main difference compared to Algorithm~\ref{alg:xy} is that the
algorithm revisits the previously found sets at each iteration,
ensuring that every set can be found at most once.  Therefore, the
number of iterations is equal to the number of non-equivalent sets
found by the algorithm.

For this we maintain sequences of previously calculated sets
$X_1, X_2,\dots,X_i$ and $Y_1,Y_2,\dots,Y_j$ such that
\begin{equation}\label{eq:seqeq}
  \frac{\partial(Y_1)}{\delta(Y_1)}
  < \frac{\partial(Y_2)}{\delta(Y_2)}
  < \dots
  < \frac{\partial(Y_j)}{\delta(Y_j)}
  < \frac{\partial(X_i)}{\delta(X_i)}
  < \dots
  < \frac{\partial(X_2)}{\delta(X_2)} < \frac{\partial(X_1)}{\delta(X_1)}
\end{equation}

At each iteration, the unnecessary sets are deleted. 
Note that --- by Theorem~\ref{thm:submodflow}--- if there exists a set $Z$ with
$\delta(Z)=0$ and $\varrho(Z)\neq 0$ and
$\kappa > \frac{b(Z)}{\varrho(Z)}$, then there exists no feasible submodular
flow with lower bound $\kappa\egy$.
Analogously, if $\delta(Z)>0$ and
$\sigma<\frac{\kappa\partial(Z)-b(Z)}{\delta(Z)}$, then a
$(\kappa\egy, (\kappa+\sigma)\egy)$-bounded submodular flow cannot exist.
Therefore, if the value of a flow is at least $\kappa$ on each edge,
then spread of the flow is at least $\frac{\kappa\partial(Z)-b(Z)}{\delta(Z)}$.

Therefore a set $X_{i}$ is to be
considered unnecessary if for any $\kappa'$ there exists a set
$Z\in \{Y_1,Y_2,\dots ,Y_j, X_{i-1}, \dots , X_1\}$ such that
\begin{equation*}
  \frac{\kappa'\partial(X_{i})-b(X_{i})}{\delta(X_{i})}
  \leq \frac{\kappa'\partial(Z)-b(Z)}{\delta(Z)}.
\end{equation*}
It means that for any $\kappa'$, there exists another found set $Z$
which defines a larger lower bound for the spread than those obtained
by $X_i$. Due to the deleting process at the end of every iteration,
the inequalities in~(\ref{eq:seqeq}) hold true.

It is clear that if
$Z,Z' \in \{Y_1,Y_2,\dots ,Y_j, X_i, X_{i-1}, \dots , X_1\}$ are two
different sets and
$\frac{\partial(Z)}{\delta(Z)} < \frac{\partial(Z')}{\delta(Z')}$ then
the followings holds true.
\begin{align*}
  \frac{\kappa'\partial(Z)-b(Z)}{\delta(Z)}
  < \frac{\kappa'\partial(Z')-b(Z')}{\delta(Z')}
  \quad \text{if } \kappa'<\kappa(Z,Z') \\
  \frac{\kappa'\partial(Z)-b(Z)}{\delta(Z)}
  = \frac{\kappa'\partial(Z')-b(Z')}{\delta(Z')}
  \quad \text{if } \kappa'=\kappa(Z,Z') \\
  \frac{\kappa'\partial(Z)-b(Z)}{\delta(Z)}
  > \frac{\kappa'\partial(Z')-b(Z')}{\delta(Z')}
  \quad \text{if } \kappa'>\kappa(Z,Z') 
\end{align*}
At the beginning of iteration $i$, every set in
$\{Y_1,Y_2,\dots ,Y_j, X_i, X_{i-1}, \dots , X_1\}$ was necessary,
thus
\[\kappa(Y_1, Y_2)< \kappa(Y_2,Y_3)< \cdots < \kappa(Y_j,X_j) < \cdots <
  \kappa(X_2, X_1).\]
Therefore, the following properties hold true in
every iteration.
\begin{enumerate}
\item If $\kappa<\kappa(Y_1,Y_2)$ and
  $Z\in \{Y_2,\dots ,Y_j, X_i, X_{i-1}, \dots , X_1\}$, then
  \begin{equation*}
    \frac{\kappa\partial(Y_1)-b(Y_1)}{\delta(Y_1)}
    > \frac{\kappa\partial(Z)-b(Z)}{\delta(Z)}.
  \end{equation*}
\item If $\kappa(Y_{j'-1},Y_{j'}) < \kappa < \kappa(Y_{j'},Y_{j'+1})$
  and $Z\in \{Y_1,Y_2,\dots ,Y_j, X_i, X_{i-1}, \dots , X_1\}$,
  $Z\neq Y_{j'}$, then
  \begin{equation*}
    \frac{\kappa\partial(Y_{j'})-b(Y_{j'})}{\delta(Y_{j'})}
    > \frac{\kappa\partial(Z)-b(Z)}{\delta(Z)}.
  \end{equation*}
\item If $\kappa(Y_{j-1},Y_{j}) < \kappa < \kappa(Y_{j},X_i)$ and
  $Z\in \{Y_1,Y_2,\dots Y_{j-1}, X_i, X_{i-1}, \dots , X_1\}$, then
  \begin{equation*}
    \frac{\kappa\partial(Y_{j})-b(Y_{j})}{\delta(Y_{j})}
    > \frac{\kappa\partial(Z)-b(Z)}{\delta(Z)}.
  \end{equation*}
\end{enumerate}
Similar properties hold for any $X_{i'}\in \{X_i, X_{i-1}, \dots , X_1\}$.

If $\delta(C)=0$, then $\kappa_i> \frac{b(C)}{\varrho(C)}$. In this
case (line (\ref{algline:xy2:delta0})) every set in
$\{X_i, X_{i-1} \dots X_1\}$ is unnecessary.
At the beginning of this
iteration, $Y_{j'}$ defines the largest lower bound for the spread if
and only if
$\kappa(Y_{j'-1},Y_{j'})\leq \kappa \leq
\kappa(Y_{j'},Y_{j'+1})$. Therefore, $Y_{j'}$ is no longer necessary
if and only if $\frac{b(C)}{\varrho(C)}<
\kappa(Y_{j'-1},Y_{j'})$. This is equivalent to the condition
$\sigma(C,Y_j)\leq\sigma(C,Y_{j-1})$.

If $\delta(C)\neq 0$, we have that
\begin{equation*}
  \frac{\kappa\partial(Y_j)-b(Y_j)}{\delta(Y_j)}
  = \sigma(X_i, Y_j) < \frac{\kappa\partial(C)-b(C)}{\delta(C)}.
\end{equation*}
Therefore, $X_i$ or $Y_j$ are unnecessary whenever
$\frac{\partial(X_i)}{\delta(X_i)}\leq\frac{\partial(C)}{\delta(C)}$
or
$\frac{\partial(Y_j)}{\delta(Y_j)}\geq\frac{\partial(C)}{\delta(C)}$
hold, respectively.  Since $X_i$ was necessary at the end of the
previous iteration, then
\begin{equation*}
  \frac{\kappa\partial(X_i)-b(X_i)}{\delta(X_i)}
  \geq \frac{\kappa\partial(X)-b(X)}{\delta(X)}
  \quad \forall X \in \{ X_{i-1},\dots ,X_1\},
\end{equation*}
holds for any $\kappa(Y_{j},X_i) \leq \kappa \leq \kappa(X_i, X_{i-1})$,
which means that $X_i$ is unnecessary if and only if
\begin{equation} \label{eq:X_ideleting}
  \frac{\kappa\partial(X_i)-b(X_i)}{\delta(X_i)}
  \leq \frac{\kappa\partial(C)-b(C)}{\delta(C)}
  \quad \forall \kappa\in [ \kappa(Y_{j},X_i), \kappa(X_i, X_{i-1})]
\end{equation}
Since
$\frac{\partial(X_i)}{\delta(X_i)}>\frac{\partial(C)}{\delta(C)}$
(otherwise $X_i$ would have already been deleted), it is enough to
check if expression~(\ref{eq:X_ideleting}) holds at the end of the interval,
i.\,e.\ for $\kappa= \kappa(X_i, X_{i-1})$. Thus,
inequality~(\ref{eq:X_ideleting}) reduces to
$\kappa(C,X_i)\geq\kappa(C,X_{i+1})$. The decision of whether or not
$Y_j$ is necessary can be made in the same way. Note that if
$\frac{\partial(X_1)}{\delta(X_1)}>\frac{\partial(C)}{\delta(C)}$,
then $X_1$ will not be deleted. Analogously, if
$\frac{\partial(Y_1)}{\delta(Y_1)}<\frac{\partial(C)}{\delta(C)}$, then
$Y_1$ will not be deleted in the current iteration.

After deleting all the unnecessary sets, the algorithm adds the new
set $C$ to the appropriate subsequence. Note that $C$ is always necessary in the
iteration where it is computed and it is not equivalent to any
$Z\in \{Y_1, Y_2,\dots,Y_j, X_i, X_{i-1}, \dots, X_1\}$. In addition, a deleted (i.\,e.\ unnecessary) set cannot become
necessary again. Therefore, the algorithm will find every set at most
once, and delete it at most once.

To sum up, the algorithm finishes after $\mathcal{O}(m^2)$ iterations,
therefore we get the following theorem.
\begin{thm}
  Algorithm~\ref{alg:xy2} solves the Balanced Submodular Flow Problem
  in $\mathcal{O}(m^2\Upsilon)$ time. 
  \qedwhite{}
\end{thm}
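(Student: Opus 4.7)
The plan is to combine a global iteration count of $\mathcal{O}(m^2)$ with a per-iteration cost of $\mathcal{O}(\Upsilon)$. For the iteration count, I would start from Lemma~\ref{thm:noneqnum}, whose proof inspects only the form of the submodular function $b(X)+\sigma\delta(X)-\kappa\partial(X)$ minimized in line~\ref{algline:xy2:compc} and therefore applies verbatim to Algorithm~\ref{alg:xy2}, bounding by $(m+1)^2$ the number of pairwise non-equivalent sets that can ever be returned as $C$. It then remains to show that each iteration produces a set non-equivalent to every set produced in any earlier iteration; together with Lemma~\ref{thm:noneqnum}, this caps the number of iterations at $\mathcal{O}(m^2)$.

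The ``no repetition'' property splits into two parts. First, the set $C$ computed in the current iteration is not equivalent to any currently maintained $X_{i'}$ or $Y_{j'}$: the algorithm only continues past line~\ref{algline:xy2:opt} when $b(C)+\sigma_i\delta(C)-\kappa_i\partial(C)<0$, whereas by the invariant~(\ref{eq:seqeq}) and the chain of $\kappa$-breakpoints maintained at the top of the iteration, every currently kept set $Z$ satisfies $b(Z)+\sigma_i\delta(Z)-\kappa_i\partial(Z)\geq 0$ at the active $(\sigma_i,\kappa_i)$. Second, $C$ is not equivalent to any previously deleted set. For this I would argue by induction on iterations that once a set $Z$ is removed, the currently maintained chain dominates the affine lower-bound function $\kappa\mapsto(\kappa\partial(Z)-b(Z))/\delta(Z)$ pointwise on the entire $\kappa$-range on which $Z$ had previously been responsible. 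The inductive step is a transitivity argument: if $Z$ was deleted because a then-new $Z'$ gave a better bound on $Z$'s interval, and $Z'$ is later deleted by some $Z''$ on a sub-interval, then by linearity in $\kappa$ together with the strict ordering of $\partial/\delta$ ratios in~(\ref{eq:seqeq}), $Z''$ inherits $Z'$'s dominance over $Z$ on that sub-interval.

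The per-iteration cost is dominated by the single submodular function minimization in line~\ref{algline:xy2:compc}, which takes $\Upsilon$. The two pruning \textbf{while} loops do $\mathcal{O}(1)$ work per deletion (evaluating $\kappa(\cdot,\cdot)$, $\sigma(\cdot,\cdot)$, and the stored values $b(\cdot),\delta(\cdot),\varrho(\cdot),\partial(\cdot)$), and since deleted sets never re-enter, the total deletion work summed over the run is $\mathcal{O}(m^2)$, which is absorbed into $\mathcal{O}(m^2\Upsilon)$. The main obstacle I expect is making the ``no re-entry'' claim completely rigorous, since the maintained family itself shrinks over time and one must verify that a deleted set remains dominated even after its original dominator is later removed. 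The transitivity observation sketched above, combined with the preserved invariant~(\ref{eq:seqeq}) and the monotone tightening of the $\kappa$-chain, is the intended way to close this gap; once it is secured, correctness, the bound on the number of iterations, and the overall running time all follow.
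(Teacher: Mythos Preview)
Your proposal is correct and follows essentially the same line as the paper: both invoke Lemma~\ref{thm:noneqnum} (whose proof depends only on the form of the minimization in line~\ref{algline:xy2:compc} and hence carries over verbatim) to bound the non-equivalent $C$'s by $(m+1)^2$, and both argue that each iteration yields a fresh $C$ because the new set is ``necessary'' at $(\kappa_i,\sigma_i)$ while maintained sets are not strictly violating and deleted sets remain dominated by the current envelope. Your envelope-monotonicity/transitivity sketch for the no-re-entry claim is exactly what underlies the paper's terse assertion that ``a deleted (i.e.\ unnecessary) set cannot become necessary again,'' and your amortized accounting of the pruning loops matches the paper's implicit reasoning.
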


\section{Weighted Balanced Submodular Flows}\label{sec:weighted}

This section introduces the weighted version of the problem and
extends the dual characterization to this case. Then, a strongly
polynomial algorithm is presented to determine the minimum weighted
spread with a positive weight function.

\begin{defn}
  Given a weight function $c: A\nyil \R^+$ on the edges, the
  \emph{weighted spread} of a vector $x\in\R^A$ is the
  value
\[\sigma_c(x):=\max_{a\in A}c(a)x(a) - \min_{a\in A}c(a)x(a)\]
\end{defn}

The \emph{Balanced Submodular Flow Problem} is defined as the following.

\begin{prob}\label{prb:gensubflow}
  Let us given a directed graph $G=(V,A)$, a weight function
  $c: A\nyil \R^+$ and a submodular function $b:\P(V)\nyil\Z$. Find
\begin{equation}
  \sigma^*:=\min\left\{\sigma_c(x):x\in\R^A,
    \varrho_x(X)-\delta_x(X) \leq b(X)\quad\forall X\subseteq V\right\}
\end{equation}
along with a minimizing vector $x$.
\end{prob}

Let us introduce the following notations.
\begin{flalign*}
  \tilde{c} (e) &:= \frac{1}{c(e)}\\
  \varrho_{\tilde{c}}(X)&:=\sum_{e\in \varrho(X)}\tilde{c}(e)\\
  \delta_{\tilde{c}}(X)&:=\sum_{e\in \delta(X)}\tilde{c}(e) \\
  \partial_{\tilde{c}}(X)&:=\varrho_{\tilde{c}}(X)-\delta_{\tilde{c}}(X)
\end{flalign*}  

\begin{defn}
  For an arbitrary real value $\kappa\in \R$, let $s(\kappa)$ denote
  the minimal value $\sigma$ for which there exists a
  $\left(\kappa\tilde{c},(\kappa+\sigma)\tilde{c}\right)$-bounded
  submodular flow $x$.
\end{defn}

Clearly, 
\begin{equation*}
    \sigma^* = \min_{\kappa}s(\kappa)
\end{equation*}

The following theorem determines a dual characterization of
Problem~\ref{prb:gensubflow} and then we will show that the optimal
spread can be computed in strongly polynomial time.
 
\begin{thm}\label{thm:weightedoptimum}
  Assume that there exists a set $X\subseteq V$ such that
  $\partial_{\tilde{c}}(X)\gneqq 0$. Then
  \begin{equation} \label{eq:weighted_opt}
    \sigma^*= \max \Bigg\{
    \frac{-b(X)\partial_{\tilde{c}}(Y)+b(Y)\partial_{\tilde{c}}(X)}
    {\delta_{\tilde{c}}(X)\partial_{\tilde{c}}(Y)-
      \delta_{\tilde{c}}(Y)\partial_{\tilde{c}}(X)}
    :
    \partial_{\tilde{c}}(X)\geq 0, \partial_{\tilde{c}}(Y)<0  \Bigg\},
  \end{equation} 
  or $\sigma^*=0$ if the maximum above is negative.

  If $\partial_{\tilde{c}}(X)=0$ holds for all $X\subseteq V$, then
  $s(\kappa)$ is a constant function. In this case
  \begin{equation}
    \sigma^* = \max_{\partial_{\tilde{c}}(X)=0}\Bigg\{
    \frac{-b(X)}{\delta_{\tilde{c}}(C)} : b(X)<0\Bigg\},
  \end{equation}
  or $\sigma^*=0$ if $b(X)\geq 0$ hold for all $X\subseteq V$.
\end{thm}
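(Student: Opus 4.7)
The plan is to follow the architecture of the proof of Theorem~\ref{thm:eqxy}, substituting the unweighted counts $\delta,\partial$ with their $\tilde{c}$-weighted analogues $\delta_{\tilde{c}},\partial_{\tilde{c}}$. The starting observation is that applying Theorem~\ref{thm:submodflow} to $l:=\kappa\tilde{c}$ and $u:=(\kappa+\sigma)\tilde{c}$ gives $\varrho_{\kappa\tilde{c}}(X)-\delta_{(\kappa+\sigma)\tilde{c}}(X)=\kappa\partial_{\tilde{c}}(X)-\sigma\delta_{\tilde{c}}(X)$, so a $(\kappa\tilde{c},(\kappa+\sigma)\tilde{c})$-bounded submodular flow exists if and only if
\[
\kappa\partial_{\tilde{c}}(X)-\sigma\delta_{\tilde{c}}(X)\leq b(X)\qquad\forall X\subseteq V.
\]
Hence $s(\kappa)$ is the smallest $\sigma\geq 0$ satisfying these inequalities, and $\sigma^*=\min_\kappa s(\kappa)$.

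For the lower bound I would mimic Lemma~\ref{thm:lbxy}: let $x^*$ attain $\sigma^*$ and pick $\kappa$ with $\kappa\tilde{c}\leq x^*\leq(\kappa+\sigma^*)\tilde{c}$. For any $X,Y$ with $\partial_{\tilde{c}}(X)\geq 0$ and $\partial_{\tilde{c}}(Y)<0$ the above inequality holds at both $X$ and $Y$; multiplying the one at $X$ by $\partial_{\tilde{c}}(Y)\leq 0$ (flipping the direction) and the one at $Y$ by $\partial_{\tilde{c}}(X)\geq 0$, then eliminating $\kappa$, yields exactly the ratio in~(\ref{eq:weighted_opt}) as a lower bound on $\sigma^*$. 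The denominator is nonzero because $\delta_{\tilde{c}}(Y)>0$ (forced by $\partial_{\tilde{c}}(Y)<0$ together with $c>0$) and the hypothesis $\partial_{\tilde{c}}(X)\gneqq 0$ rules out the degenerate case.

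For the matching upper bound I would take a pair $(X,Y)$ maximizing the right-hand side of~(\ref{eq:weighted_opt}) and, among all such maximizers, also maximizing the auxiliary quantity
\[
\kappa(X,Y):=\frac{b(X)\delta_{\tilde{c}}(Y)-b(Y)\delta_{\tilde{c}}(X)}{\delta_{\tilde{c}}(Y)\partial_{\tilde{c}}(X)-\delta_{\tilde{c}}(X)\partial_{\tilde{c}}(Y)},
\]
and claim that a $(\kappa(X,Y)\tilde{c},(\kappa(X,Y)+\sigma(X,Y))\tilde{c})$-bounded submodular flow exists. Otherwise Theorem~\ref{thm:submodflow} produces a set $C$ with $\kappa(X,Y)\partial_{\tilde{c}}(C)-\sigma(X,Y)\delta_{\tilde{c}}(C)>b(C)$. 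The same three-way case split as in the proof of Theorem~\ref{thm:eqxy}---(i) $\partial_{\tilde{c}}(C)\geq 0$; (ii) $\partial_{\tilde{c}}(C)<0$ with $\partial_{\tilde{c}}(X)>0$; (iii) $\partial_{\tilde{c}}(C)<0$ with $\partial_{\tilde{c}}(X)=0$---combined with the analogous chain of equalities (written throughout in $\delta_{\tilde{c}},\partial_{\tilde{c}}$) yields respectively $\sigma(C,Y)>\sigma(X,Y)$, $\sigma(X,C)>\sigma(X,Y)$, or $\kappa(X,C)>\kappa(X,Y)$, in each case contradicting the maximal choice of $(X,Y)$. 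The main obstacle is precisely this algebraic bookkeeping: the manipulations are mechanically parallel to the Eulerian case, but sign-tracking is more delicate because $\delta_{\tilde{c}}$ and $\partial_{\tilde{c}}$ are arbitrary reals rather than integers.

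Finally, in the degenerate regime where $\partial_{\tilde{c}}(X)=0$ for every $X\subseteq V$, the feasibility inequality collapses to $\sigma\delta_{\tilde{c}}(X)\geq -b(X)$, which is independent of $\kappa$; hence $s(\kappa)$ is a constant function of $\kappa$ and its value is $\max\{-b(X)/\delta_{\tilde{c}}(X): \partial_{\tilde{c}}(X)=0,\ b(X)<0\}$, yielding the second formula, with $\sigma^*=0$ whenever $b(X)\geq 0$ for all $X$. Sets with $\delta_{\tilde{c}}(X)=0$ (hence $\varrho_{\tilde{c}}(X)=0$, i.e.\ no incident edges) impose no constraint thanks to the standing assumption $b(X)\geq 0$ on such sets.
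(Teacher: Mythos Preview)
Your proposal is correct and matches the paper's own approach: the paper explicitly states that the proof of Theorem~\ref{thm:weightedoptimum} ``follows the same pattern as that of Theorem~\ref{thm:eqxy} and the details are left to the reader,'' and you have carried out precisely this substitution of $\delta,\partial$ by $\delta_{\tilde c},\partial_{\tilde c}$, including the lower-bound lemma, the three-case contradiction argument, and the degenerate case. The only minor wrinkle is your remark that the global hypothesis $\partial_{\tilde c}(X)\gneqq 0$ ``rules out the degenerate case'' for the denominator---strictly speaking one still needs the particular $X$ in the max to have an incident edge (the analogue of the condition $\varrho(X)+\delta(X)>0$ in~\eqref{eq:eqxy}), but this is a well-definedness detail of the statement rather than a gap in your proof strategy.
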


The proof of Theorem \ref{thm:weightedoptimum} follows the same pattern as that of
Theorem~\ref{thm:eqxy} and the details are left to the reader.

The following two sections present a strongly polynomial algorithm to
compute the value of $s(\kappa)$ then to find its minimum, therefore
solving the weighted version of the Balanced Submodular Flow
Problem. This latter algorithm also provides a corresponding dual
solution described in Theorem~\ref{thm:weightedoptimum}.

\subsection{Computing the Value of $s(\kappa)$}

In this section, the following theorem is proved.

\begin{thm}\label{thm:s(kappa)computed}
  The value of $s(\kappa)$ can be computed with
  $n^2+\mathcal{O}(m\log^2(m))$ submodular function
  minimizations.
\end{thm}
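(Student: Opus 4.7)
Using Theorem~\ref{thm:submodflow}, a $(\kappa\tilde c,(\kappa+\sigma)\tilde c)$-bounded submodular flow exists if and only if
\[f_\kappa(\sigma) := \min_{X\subseteq V}\bigl[b(X) - \kappa\,\partial_{\tilde c}(X) + \sigma\,\delta_{\tilde c}(X)\bigr] \geq 0,\]
so $s(\kappa)$ is the smallest nonnegative root of $f_\kappa$, or $s(\kappa)=0$ if already $f_\kappa(0)\geq 0$. Since $b$ is submodular, $\partial_{\tilde c}$ is modular, and $\delta_{\tilde c}$ is nonnegative and submodular, the bracketed expression is submodular in $X$ for every fixed $\sigma\geq 0$. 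Hence a single submodular function minimization both evaluates $f_\kappa$ at a chosen $\sigma$ and returns a minimizing set.

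The plan is to apply the Discrete Newton Method to the concave, piecewise-linear, nondecreasing function $f_\kappa$. Starting from $\sigma_1:=0$, at step $i$ compute a minimizer $X_i$ via one SFM; if $f_\kappa(\sigma_i)\geq 0$ return $\sigma_i$; if $\delta_{\tilde c}(X_i)=0$ but the value is still negative, declare infeasibility; otherwise set
\[\sigma_{i+1} := \frac{-\bigl(b(X_i)-\kappa\,\partial_{\tilde c}(X_i)\bigr)}{\delta_{\tilde c}(X_i)}.\]
A standard concavity calculation shows that $\sigma_i\nearrow s(\kappa)$ and that the slopes $\delta_{\tilde c}(X_i)$ strictly decrease, so the procedure is correct and each iteration consumes exactly one submodular function minimization.

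The remaining task is to bound the number of iterations as $n^2+\mathcal{O}(m\log^2 m)$. I would partition the iterations into two types. The first type collects those iterations in which $|f_\kappa(\sigma_i)|$ fails to shrink by at least a factor of four between consecutive steps; applying Theorem~\ref{thm: submodfunc} (via Lemmas~\ref{thm:b_decrease_fast} and~\ref{thm:chainringfamilies}) to the submodular function $X\mapsto b(X)-\kappa\,\partial_{\tilde c}(X)+\sigma_i\delta_{\tilde c}(X)$ shows that at most $\binom{n+1}{2}+1=\mathcal{O}(n^2)$ such iterations can occur, since the successive minimizers generate a strictly growing chain of ring families in $\P(V)$. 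The second type consists of iterations that do exhibit the fourfold drop; a standard Radzik-style argument then forces a corresponding halving of the slope $\delta_{\tilde c}(X_i)$ inside each geometric block, and Goemans's Lemma~\ref{goemans} applied with $p=m$, weight vector $\tilde c$, and characteristic vectors $\chi_{\delta(X_i)}\in\{0,1\}^m$ (so that $\tilde c\cdot\chi_{\delta(X_i)}=\delta_{\tilde c}(X_i)$) bounds iterations of this type by $\mathcal{O}(m\log m)$ per halving scale, and hence by $\mathcal{O}(m\log^2 m)$ across all $\mathcal{O}(\log m)$ scales.

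The main obstacle will be aligning the two regimes so that the bounds compose additively into $n^2+\mathcal{O}(m\log^2 m)$ rather than multiplicatively. Concretely, I would need to verify that between consecutive first-type iterations the shifted submodular function satisfies the quadrupling hypothesis of Theorem~\ref{thm: submodfunc}, and that on second-type iterations the characteristic vectors $\chi_{\delta(X_i)}$ cleanly satisfy the halving condition of Goemans's lemma once grouped by scale, without any interleaving with first-type iterations re-opening exhausted budget. Once these bookkeeping points are settled, summing the two bounds gives the claimed $n^2+\mathcal{O}(m\log^2 m)$ submodular function minimizations.
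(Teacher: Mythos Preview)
Your setup and the use of the Discrete Newton Method are correct (and in fact your sign $+\sigma\delta_{\tilde c}(X)$ is the right one). The gap is in the iteration-count argument: you have swapped which tool applies to which regime.

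Recall the basic Newton inequality (Claim~\ref{lem:hi+ai<=1}): writing $h_i=f_\kappa(\sigma_i)<0$ and $a_i=\delta_{\tilde c}(X_i)$, one has $\tfrac{h_{i+1}}{h_i}+\tfrac{a_{i+1}}{a_i}\le 1$. Hence it is precisely on your \emph{first-type} iterations, where $|h_{i+1}|>|h_i|/4$, that the slopes are forced to drop by a constant factor ($a_{i+1}/a_i<3/4$); Goemans's Lemma~\ref{goemans} then bounds \emph{those} iterations by $\mathcal{O}(m\log m)$. Conversely, on your \emph{second-type} iterations, where $|h_{i+1}|\le|h_i|/4$, nothing prevents $a_{i+1}/a_i$ from being arbitrarily close to $1$, so your ``Radzik-style halving of the slope'' does not follow and Lemma~\ref{goemans} is inapplicable there. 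The ring-family machinery (Lemmas~\ref{thm:b_decrease_fast}--\ref{thm:chainringfamilies}, Theorem~\ref{thm: submodfunc}) is what handles the value-dropping iterations, not the value-stalling ones: its hypothesis is a geometric \emph{growth} condition on $b'(X_i)$, which in the Newton picture corresponds to $|h_i|$ shrinking geometrically, not failing to shrink.

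There is a second, related issue. You propose to apply Theorem~\ref{thm: submodfunc} to the function $X\mapsto b(X)-\kappa\partial_{\tilde c}(X)+\sigma_i\delta_{\tilde c}(X)$, but this function varies with $i$, while the theorem requires a single fixed submodular function against which all the $X_i$ are measured. The paper handles this by partitioning the value-dropping indices into maximal consecutive runs $[s_l,t_l]$ and, within each run, fixing the reference function $b'(X):=b(X)-\kappa\partial_{\tilde c}(X)+\sigma_{t_l}\delta_{\tilde c}(X)$ at the run's endpoint; after trimming the last $\lceil\log(n^2/4)\rceil$ indices from each run (to make Lemma~\ref{thm:notinringfamily} fire across runs), the remaining sets form a chain of ring families of length at most $\binom{n+1}{2}+1$. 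The extra $\log$ factor you were looking for does not come from ``$\mathcal{O}(\log m)$ halving scales''; it comes from this trimming: the number of runs is at most $|I|+1=\mathcal{O}(m\log m)$, each losing $\mathcal{O}(\log n)$ indices, which contributes the $\mathcal{O}(m\log^2 m)$ term.
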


By Theorem~\ref{thm:submodflow}, we get that
\begin{claim}
  \begin{equation}\label{eq:s(kappa)weighted}
    s(\kappa) = \min \left\{\sigma \geq 0 :
      \min_{X\subseteq V}
      \left(b(S)-\kappa\partial_{\tilde{c}}(X) -
        \sigma\delta_{\tilde{c}}(X)\right)\geq 0\right\}
  \end{equation}
  \qedwhite{}
\end{claim}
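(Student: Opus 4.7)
The identity is a direct specialization of Theorem~\ref{thm:submodflow} to the particular choice of bounds $l := \kappa\tilde{c}$ and $u := (\kappa+\sigma)\tilde{c}$. By the definition of $s(\kappa)$, proving the claim amounts to showing that the set of $\sigma \geq 0$ for which a $(\kappa\tilde{c},(\kappa+\sigma)\tilde{c})$-bounded submodular flow exists coincides with the feasible set of the minimization on the right-hand side of~\eqref{eq:s(kappa)weighted}.

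The plan has two routine steps. First, I will verify that the boundedness condition $l \leq u$ collapses to $\sigma \geq 0$: since the weight function $c$ is strictly positive by assumption, so is $\tilde{c}$, and hence $(\kappa+\sigma)\tilde{c}(e)-\kappa\tilde{c}(e)=\sigma\tilde{c}(e)\geq 0$ holds for every edge $e$ if and only if $\sigma \geq 0$. This matches the outer constraint $\sigma \geq 0$ in~\eqref{eq:s(kappa)weighted}. Second, I will substitute the chosen $l$ and $u$ into inequality~\eqref{eqn:submoddual} and expand: by linearity $\varrho_{\kappa\tilde{c}}(X) = \kappa\varrho_{\tilde{c}}(X)$ and $\delta_{(\kappa+\sigma)\tilde{c}}(X) = (\kappa+\sigma)\delta_{\tilde{c}}(X)$, so the condition $\varrho_l(X)-\delta_u(X)\leq b(X)$ becomes $\kappa\varrho_{\tilde{c}}(X)-(\kappa+\sigma)\delta_{\tilde{c}}(X)\leq b(X)$; collecting the $\kappa$-terms via $\partial_{\tilde{c}}=\varrho_{\tilde{c}}-\delta_{\tilde{c}}$ and rearranging produces precisely the inner inequality in~\eqref{eq:s(kappa)weighted}. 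Requiring it for all $X\subseteq V$ is exactly the inner $\min_{X\subseteq V}(\,\cdot\,)\geq 0$ condition, so Theorem~\ref{thm:submodflow} guarantees that a feasible $(\kappa\tilde{c},(\kappa+\sigma)\tilde{c})$-bounded submodular flow exists precisely for these $\sigma$.

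There is essentially no obstacle in this argument; Theorem~\ref{thm:submodflow} does all the work and the computation is mechanical. The only minor point worth a sentence is that the infimum in~\eqref{eq:s(kappa)weighted} is attained: because $\tilde{c}>0$ we have $\delta_{\tilde{c}}(X)\geq 0$ for every $X$, so the inner expression is monotone nondecreasing in $\sigma$ for each fixed $X$; combined with the finiteness of $\P(V)$, this guarantees that the feasible set of $\sigma$ is a closed half-line $[s(\kappa),\infty)$, whose left endpoint is therefore the value of $s(\kappa)$.
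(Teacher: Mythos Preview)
Your argument is correct and is exactly the paper's approach: the paper simply writes ``By Theorem~\ref{thm:submodflow}, we get that\ldots'' before stating the claim, and you have spelled out that one-line justification in full (with the helpful extra remark on attainment of the minimum). One caveat: your computation actually yields $b(X)-\kappa\partial_{\tilde{c}}(X)+\sigma\delta_{\tilde{c}}(X)\geq 0$, whereas the displayed claim has $-\sigma\delta_{\tilde{c}}(X)$; this is a sign typo in the paper (cf.\ the consistent form in~\eqref{eqn:eskappa-frac}), not a flaw in your derivation.
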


Note that $b(S)-\kappa\partial_{\tilde{c}}(X)$ is a submodular function,
therefore Theorem~\ref{thm:s(kappa)computed} is clearly a special case
of the following extension of Theorem~\ref{thm:Goemans-Gupta-Jaillet}
proven below.

\begin{thm}\label{thm:function_value}
  Let $b$ be a submodular function over the set $V$, $|V|=n$ and assume that
  $b(\emptyset)\geq 0$. In addition, let $f:\P(V) \nyil \P(A)$ and
  $a\in \R^A$, such that $b'(S)=a(f(S))=\sum_{e\in f(S)}a(e)$ is
  also submodular and let $|E|=m$. Then the value of
  \begin{equation}\label{eq:delta2*}
    \delta^*:= \min \left\{\delta \geq 0 :
      \min_{S\subseteq V}\left(b(S)+\delta a(f(S))\right)\geq 0\right\}
  \end{equation}
  can be computed by performing $n^2+\mathcal{O}(m\log^2m)$ submodular
  function minimization over the set $V$, i.\,e.~in time
  $\mathcal{O}((n^2+m\log^2m)\Upsilon)$.
\end{thm}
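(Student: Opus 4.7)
The plan is to adapt the proof of Theorem~\ref{thm:Goemans-Gupta-Jaillet} from \cite{compute_value_gen}. The crucial observation is that each slope $b'(S) := a(f(S))$ equals $a \cdot \chi_{f(S)}$, where $\chi_{f(S)} \in \{0,1\}^A$ is the characteristic vector of $f(S) \subseteq A$. Hence the slopes appearing in the Discrete Newton Method live in a vector space indexed by subsets of $A$ (of size $m$) rather than by subsets of $V$ (of size $n$), which is why Goemans' Lemma (Lemma~\ref{goemans}) is to be applied with parameter $p = m$, explaining the $\log^2 m$ factor.

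I would first set up a Discrete Newton iteration: starting from $\delta_0 = 0$, at each step $i$ compute $S_i \in \argmin\{b(S) + \delta_i b'(S) : S \subseteq V\}$ by one submodular function minimization (since $b + \delta b'$ is submodular for $\delta \geq 0$). If $b(S_i) + \delta_i b'(S_i) \geq 0$ then $\delta^* = \delta_i$ and we return; otherwise $b'(S_i) > 0$, and we set $\delta_{i+1} := -b(S_i)/b'(S_i)$, giving $\delta_i < \delta_{i+1} \leq \delta^*$. Following the Goemans--Gupta--Jaillet scheme, this Newton iteration is then embedded inside a bisection on $\delta$, where the bisection narrows the interval containing $\delta^*$ and the Newton iterates provide a geometric decrease of the residual within each sub-interval. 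Applying Lemma~\ref{goemans} to the differences $\chi_{f(S_i)} - \chi_{f(S_{i+1})} \in \{-1,0,+1\}^A$ bounds the number of Newton iterations per bisection phase by $\mathcal{O}(m \log m)$, and the enclosing bisection adds a logarithmic factor, yielding the $\mathcal{O}(m \log^2 m)$ term.

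The additive $n^2$ term arises from a preliminary phase that localises $\delta^*$ to an interval small enough for the above geometric halving to take effect. Here the ring-family machinery of Lemmas~\ref{thm:b_decrease_fast}--\ref{thm:chainringfamilies} and Theorem~\ref{thm: submodfunc} enters: the sequence of minimising sets produced before the geometric decrease regime sets in must satisfy the growth conditions of Lemma~\ref{thm:b_decrease_fast}, so by Theorem~\ref{thm: submodfunc} there are at most $\binom{n+1}{2} + 1 = \mathcal{O}(n^2)$ of them, each costing one SFM. The main obstacle will be verifying that these ring-family arguments---originally stated for a fixed submodular function on $V$---transfer to the parametric family $b(\cdot) + \delta b'(\cdot)$ as $\delta$ varies; specifically, one must ensure that the ``non-geometric'' minimisers sit inside a single ring family on $V$ so that Lemma~\ref{thm:chainringfamilies} and Theorem~\ref{thm: submodfunc} apply and deliver the $\binom{n+1}{2}$ bound before control is handed off to the Newton-plus-bisection machinery that yields the $\mathcal{O}(m \log^2 m)$ remainder.
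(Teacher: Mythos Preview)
You have identified the correct ingredients---the Discrete Newton Method, Goemans' Lemma~\ref{goemans} applied with parameter $p=m$ (since each slope is $a\cdot\chi_{f(S)}$ with $\chi_{f(S)}\in\{0,1\}^A$), and the ring-family bound of Theorem~\ref{thm: submodfunc} giving $\binom{n+1}{2}+1$---but the way you assemble them does not match the paper (nor~\cite{compute_value_gen}). There is no bisection and no preliminary localisation phase: the algorithm is a \emph{single} run of the Discrete Newton Method (Algorithm~\ref{alg:s_kappa}), and the entire analysis is a partition of its iteration indices after the fact.

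Concretely, let $a_i:=a(f(X_i))$ and $h_i:=b(X_i)+\delta_i a_i$. The standard Newton inequality $\frac{h_{i+1}}{h_i}+\frac{a_{i+1}}{a_i}\le 1$ (Claim~\ref{lem:hi+ai<=1}) lets one split the indices into $I=\{i:a_{i+1}/a_i\le 2/3\}$ and $J=\{i:a_{i+1}/a_i>2/3\}$. Lemma~\ref{goemans} with $p=m$ gives $|I|=\mathcal{O}(m\log m)$ directly. For $i\in J$ one has $h_{i+1}/h_i<1/3$, so on each maximal interval $J_l=[s_l,t_l]$ of $J$ the values $h_i$ decrease geometrically; applying Lemmas~\ref{thm:b_decrease_fast} and~\ref{thm:notinringfamily} to the submodular function $b(\cdot)+\delta_{t_l}\,a(f(\cdot))$ shows that, after discarding the last $\lceil\log(n^2/4)\rceil$ indices of each $J_l$, the remaining sets $X_j$ (with index set $J'$) satisfy $X_j\notin R(X_k:k\in J',\,k>j)$, whence $|J'|\le\binom{n+1}{2}+1$ by Lemma~\ref{thm:chainringfamilies}. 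The discarded indices number $\mathcal{O}(\log n)$ per interval, and there are at most $|I|+1$ intervals, so $|J\setminus J'|=\mathcal{O}(m\log m\cdot\log n)=\mathcal{O}(m\log^2 m)$. The extra logarithm thus comes from the trimmed tails of the $J$-intervals, not from any outer bisection loop; and the $n^2$ term counts iterations interspersed throughout the Newton run, not a separate preliminary phase.
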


\begin{proof}
Note that $\delta^*$ is the smallest root of the piecewise linear
concave function
\[h(\delta):=\min_{S\subseteq V}\left(b(S)+\delta
    a(f(S))\right).\]
Therefore $\delta^*$ can be computed using the
well-known Discrete Newton Method outlined in
Algorithm~\ref{alg:s_kappa}. In the following, we show that the
number of iterations taken by the algorithm is at most
$n^2+\mathcal{O}(m\log^2m)$.

\begin{algorithm}[!h]
\begin{algorithmic}[1]
  \State{}$\delta_1=0$, $i=0$
  \Loop{}
  \State{}$i= i+1$
  \State{}$h_i = \min\{b(X)+\delta_i a(f(X))\}$
  \State{}$X_i\in \argmin\{b(X)+\delta_i a(f(X))\}$
  \If{$h_i=0$} \Return{}$\delta_i, X_i$\EndIf{}
  \State{}$a_i:=a(f(X_i))$
  \If{$a_i=0$}\label{alg:s_kappa:undefline} \Return{}UNDEFINED, $X_i$\EndIf{}
  \State{}$\delta_{i+1} = \frac{-b(X_{i})}{a_i}$
  \EndLoop{}
\end{algorithmic}
\caption{Discrete Newton Method}\label{alg:s_kappa}
\end{algorithm}

We use the following generic properties of the Discrete Newton Method
(\cite{radzikfrac}).
 
\begin{claim}\label{lem:newtonmon}\,
  \begin{enumerate}
  \item $h_1<h_2<\cdots <h_t=0$
  \item $\delta_1<\delta_2<\dots \delta_t$ 
  \item  $a_1>a_2>\cdots >a_t\geq 0$\qedwhite{}
  \end{enumerate}
\end{claim}

\begin{claim}\label{lem:hi+ai<=1}\,  
  \[\frac{h_{i+1}}{h_i}+\frac{a_{i+1}}{a_i} \leq 1\]
  \qedwhite{}
\end{claim}

Let $I$ denote the set of indices for which
$\frac{a_{i+1}}{a_i}\leq \frac{2}{3}$. Then --- as a direct
consequence of Lemma~\ref{goemans} --- we get that
\[
  |I|=\Big|\left\{ i: \frac{a_{i+1}}{a_i}\leq \frac{2}{3} \right\}
  \Big| = \mathcal{O}(m\log m).
\]

Let $J$ be the set of the remaining indices, i.\,e.
\begin{equation*}
    J=\left\{ i: \frac{a_{i+1}}{s_i} > \frac{2}{3} \right\}.
\end{equation*}
Due to Claim~\ref{lem:hi+ai<=1}, $\frac{h_{i+1}}{h_i}<\frac{1}{3}$
hold for any $i\in J$.

Let us consider the smallest partition
$J=J_1\cup J_2,\cup\cdots\cup J_q$ such that $J_l$ is an interval for
each $l=1,\dots, q$, i.\,e.\ $J_l=[s_l,t_l]$ and $t_l+1<s_{l+1}$. Let
$t'_l:=t_l-\lceil \log(n^2/4)\rceil$ and $J'_l:=[s_l,t'_l]$. (If
$t'_l<s_l$, then let $J'_l:=\emptyset$). Let
$J':=J'_1\cup J'_2,\cup\cdots\cup J'_q$.

\begin{lemma}
  For each $j\in J'$, we have
  \[X_j\notin R(X_k: k\in J',\ j<k).\]
\end{lemma}

\begin{proof}
  On the one hand, let $s_i\leq l < t'_i$ for some $1\leq i \leq
  q$. Then the sequence $X_{l}, X_{l+1}, \dots, X_{t'_i}$ with the
  submodular function $b'(X):=b(X)+\delta_{t_i}a(f(X))$ satisfies the
  conditions of Lemma~\ref{thm:b_decrease_fast}, therefore
  $X_{l}\notin R(X_{l+1}, X_{l+2}, \dots, X_{t'_i})$.
  
  On the other hand, if $i<q$, then let
  $F:=J'_{i+1}\cup\dots\cup J'_q$. Clearly, $b'(X_j)<0$ for each
  $j\in F$. Lemma~\ref{thm:notinringfamily} with $M=0$
  implies\footnote{Note that
    $b'(X_{t_i})=\min\{b'(X): X\subseteq V\}$.} that
  $b'(X)\leq -\frac{n^2}{4}b'(X_{t_i})$ holds for each
  $X\in R(X_k: k\in F)$.  Due to Theorem~\ref{thm: submodfunc},
  $X_{l}\in R(F)$ only if
  $t'_i < l \leq t_i$.  Therefore if $s_i\leq l \leq t'_i$, then
  $X_l\notin R(X_k: k\in J',\quad l<k)$, finishing the proof.
\end{proof}

The sequence $(X_j : j\in J')$ meets the conditions of
Lemma~\ref{thm:chainringfamilies}, thus $|J'|\leq\binom{n+1}{2}+1$.

Finally the total number of iterations taken by
Algorithm~\ref{alg:s_kappa} is
\begin{align*}
  \big|I\big|+\big|J\big|
  &\leq\big|I\big|+\big|J'\big|+2\log(n)|I|\\
  &\leq O(m\log m)+\binom{n+1}{2}+1 + 2\log(n)O(m\log m)\\
  &= O(n^2+m\log^2m),
\end{align*}
finishing the proof of Theorem~\ref{thm:s(kappa)computed}.
\end{proof}

Note that when Algorithm~\ref{alg:s_kappa} is used to compute the value of
$s(\kappa_0)$ by finding the smallest root of the function
$h(\sigma):=\min_{X\subseteq V}\left(b(S)-\kappa\partial_{\tilde{c}}(X)
  -\sigma\delta_{\tilde{c}}(X)\right)$, and the algorithm terminates at
line~\ref{alg:s_kappa:undefline}, then the provided set $X\subseteq V$
will have the properties $\delta_{\tilde{c}}(X)=0$ and
$\kappa_0> \frac{b(X)}{\varrho_{\tilde{c}}(X)}$. Therefore, no
submodular flow with lower bound $\kappa_0\tilde{c}$ may exist.

With this information in hand, one can compute the largest value
$\kappa_{\max}$ for which $s(\kappa_{\max})$ is defined (i.\,e.\ there
exists a submodular flow with lower bound $\kappa_{\max}\tilde{c}$) as
follows.

By Theorem~\ref{thm:submodflow},
\begin{equation*}
  \kappa_{\max}=
  \max\bigg\{\kappa : \min\{b(X)-\kappa\varrho_{\tilde{c}}(X):
  \forall X\subseteq V,\quad\delta_{\tilde{c}}(X)=0\} \geq 0 \bigg\}.
\end{equation*}

Therefore, this value can again be computed by applying the Discrete
Newton Method to find the largest root of the concave function
\[h(\kappa):=\min\{b(X)-\kappa\varrho_{\tilde{c}}(X): \forall
  X\subseteq V,\quad\delta_{\tilde{c}}(X)=0\}\]
with initial value
$\kappa_0$.

By the same token as in the proof of
Theorem~\ref{thm:s(kappa)computed}, the value of $\kappa_{\max}$ can be
found in $n^2+\mathcal{O}(m\log^2(m))$ iterations.

\subsection{Minimizing $s(\kappa)$}

\begin{claim}\label{lem:sconvex}
The function of $s(\kappa)$ is a convex function.
\end{claim}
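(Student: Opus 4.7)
The plan is to express $s(\kappa)$ explicitly as a pointwise supremum of a family of affine functions in $\kappa$, from which convexity is immediate.

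First I would apply Theorem~\ref{thm:submodflow} to the bounds $l:=\kappa\tilde c$ and $u:=(\kappa+\sigma)\tilde c$. The feasibility criterion is that for every $X\subseteq V$,
\begin{equation*}
  b(X)-\kappa\,\partial_{\tilde c}(X)-\sigma\,\delta_{\tilde c}(X)\ \geq\ 0.
\end{equation*}
Sets $X$ with $\delta_{\tilde c}(X)=0$ do not involve $\sigma$; they only restrict the set of $\kappa$ for which $s(\kappa)$ is defined (i.e.\ the feasible domain). For each $X$ with $\delta_{\tilde c}(X)>0$, the constraint rearranges to
\begin{equation*}
  \sigma\ \geq\ \frac{\kappa\,\partial_{\tilde c}(X)-b(X)}{\delta_{\tilde c}(X)}
  \ =:\ \ell_X(\kappa),
\end{equation*}
where $\ell_X$ is an affine function of $\kappa$. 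Combining with the requirement $\sigma\geq 0$ built into the definition of $s(\kappa)$, I obtain on the feasible domain
\begin{equation*}
  s(\kappa)\ =\ \max\Bigl\{0,\ \sup_{X:\,\delta_{\tilde c}(X)>0}\ell_X(\kappa)\Bigr\}.
\end{equation*}

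To finish, I would invoke the standard fact that the pointwise supremum of any family of affine (hence convex) functions is convex, and that taking the maximum with the constant function $0$ preserves convexity. Therefore $s$ is convex on its (convex) domain of definition.

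The argument is essentially bookkeeping once the right formula is written down; the only mild subtlety is separating the role of the constraints with $\delta_{\tilde c}(X)=0$ (which cut out the domain of $s$) from those with $\delta_{\tilde c}(X)>0$ (which contribute affine lower bounds on $s(\kappa)$). Since both roles are handled by a single application of Theorem~\ref{thm:submodflow}, there is no real obstacle.
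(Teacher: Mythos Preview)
Your argument is correct, up to a sign slip in the first displayed inequality: applying Theorem~\ref{thm:submodflow} with $l=\kappa\tilde c$ and $u=(\kappa+\sigma)\tilde c$ gives
\[
  b(X)-\kappa\,\partial_{\tilde c}(X)+\sigma\,\delta_{\tilde c}(X)\geq 0,
\]
with a plus sign in front of $\sigma\delta_{\tilde c}(X)$. Your subsequent rearrangement to $\sigma\geq\ell_X(\kappa)$ is nevertheless the right one, so the conclusion stands.

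Your route, however, is genuinely different from the paper's. The paper argues \emph{primally}: for $\kappa_1,\kappa_2$ it takes optimal flows $x_1,x_2$ witnessing $s(\kappa_1),s(\kappa_2)$, observes that the convex combination $\lambda x_1+(1-\lambda)x_2$ is again a submodular flow, and checks that it is $\bigl((\lambda\kappa_1+(1-\lambda)\kappa_2)\tilde c,\,(\lambda\kappa_1+(1-\lambda)\kappa_2+\lambda s(\kappa_1)+(1-\lambda)s(\kappa_2))\tilde c\bigr)$-bounded, which gives the convexity inequality directly. You instead argue \emph{dually}, exhibiting $s(\kappa)$ as a pointwise supremum of affine functions via the feasibility criterion. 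Your approach has the pleasant side effect of producing the explicit formula~\eqref{eqn:eskappa-frac}, which the paper derives separately later; the paper's approach is marginally more self-contained in that it does not need to invoke Theorem~\ref{thm:submodflow} at this point.
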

\begin{proof}
  For any $\kappa_1,\kappa_2\in\R$, let $x_1$ and $x_2$ be submodular
  flows such that
  $\kappa_i\egy\leq x_i \leq (\kappa_i+s(\kappa_i))\egy$, and let
  $0\leq\lambda\leq 1$. Then $x':=\lambda x_1 + (1-\lambda)x_2$ is
  also a submodular flow and
  $[\lambda\kappa_1 + (1-\lambda)\kappa_2]\egy\leq x'\leq
  [\lambda(\kappa_1+s(\kappa_1)) +
  (1-\lambda)(\kappa_2+s(\kappa_2))]\egy$, therefore
  \begin{equation}
    s(\lambda\kappa_1 + (1-\lambda)\kappa_2)
    \leq \lambda s(\kappa_1) + (1-\lambda) s(\kappa_2)
    \label{eq:sdef}.
  \end{equation}
\end{proof}

Since $s(\kappa)$ is a convex function, the Handler--Zang method ---
detailed in Appendix~\ref{subsec:handler-zang} --- is applicable to
find the minimum of it. It will be shown that it actually provides a
strongly polynomial algorithm. For this an efficient way to find an
appropriate initial interval is given, then a strongly polynomial bound
on the number of iterations is provided.

\subsubsection{Finding the Initial Interval}
In order to find the initial interval, let us start with an arbitrary value
$\kappa_0\leq\kappa_{\max}$ and iterate the usual Newton steps on the
function $s(\kappa)$ until either
\begin{enumerate}
\item a root of $s(\kappa)$ is found. Then the optimal solution of
  $0$ to the original problem is found.
\item a value with a subgradient $0$ is found. In this case the
  minimum of $s(\kappa)$ has also been reached.
\item a value with a subgradient of the opposite sign is found. Then
  $[\kappa_{i-1},\kappa_i]$ (or $[\kappa_i,\kappa_{i-1}]$ if
  $\kappa_i < \kappa_{i-1}$) is an appropriate initial interval.
\item $\kappa_i>\kappa_{\max}$. Then $[\kappa_{i-1}, \kappa_{\max}]$ is
  an appropriate initial interval.
\end{enumerate}

\subsubsection{Upper Bound on the Number of Iterations}

\begin{thm}
  The minimum of $s(\kappa)$ can be computed using the Handler--Zang
  method with $\mathcal{O}(n^4m^6\log^6(m))$ iterations.
\end{thm}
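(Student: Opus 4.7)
The plan is to bound the iteration count of the Handler--Zang method applied to the convex piecewise linear function $s(\kappa)$ (convexity by Claim~\ref{lem:sconvex}) by lifting the Discrete Newton analysis of Theorem~\ref{thm:function_value} to the two-sided setting through an Erdős--Szekeres extraction.

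First, I would observe that every Handler--Zang iteration evaluates $s$ at a trial point $\kappa_i$ via the subroutine of Theorem~\ref{thm:s(kappa)computed}, and that, together with the value, this call returns a dual certificate $(X_i, Y_i)$ in the sense of Theorem~\ref{thm:weightedoptimum}. The four quantities $\delta_{\tilde c}(X_i), \partial_{\tilde c}(X_i), \delta_{\tilde c}(Y_i), \partial_{\tilde c}(Y_i)$ determine the left and right subgradients of $s$ at $\kappa_i$, which are the only inputs to the Handler--Zang update rule; so bounding the iteration count reduces to bounding the number of distinct pairs $(X_i, Y_i)$ the algorithm can generate before localizing the minimum.

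Second, I would split iterations into \emph{fast} ones, where a suitable slope magnitude at least halves, and \emph{slow} ones, where it does not. Lemma~\ref{goemans} bounds the number of fast iterations by $\mathcal{O}(m\log m)$. For slow iterations I would mirror the proof of Theorem~\ref{thm:function_value}: on each block where the relevant submodular value evolves by constant factors, peel off a tail of length $\mathcal{O}(\log n)$ so that Lemma~\ref{thm:b_decrease_fast} applies, and then invoke Lemma~\ref{thm:chainringfamilies} to cap the surviving chain at $\binom{n+1}{2}+1$. The wrinkle is that Handler--Zang may refresh either the $X$- or the $Y$-component at any step, so the raw sequence violates the one-sided monotonicity required by Lemma~\ref{thm:b_decrease_fast}. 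I would circumvent this by applying Theorem~\ref{thm:erdos-szekeres} to extract, from any window of length $L$, a monotone one-sided subsequence of length $\sqrt{L}$ on which the ring-family argument goes through, paying a quadratic loss.

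Combining the three ingredients --- a monotone one-sided run of length $\mathcal{O}(n^2+m\log^2 m)$, an Erdős--Szekeres squaring to absorb the two-sided alternation, and an $\mathcal{O}(m^2)$ bound on distinct non-equivalent pairs in the style of Lemma~\ref{thm:noneqnum} --- yields an outer iteration count of roughly $\mathcal{O}(m^2(n^2+m\log^2 m)^2)$. Each outer iteration in turn costs $n^2+\mathcal{O}(m\log^2 m)$ submodular function minimizations by Theorem~\ref{thm:s(kappa)computed}, and multiplying produces the claimed $\mathcal{O}(n^4 m^6 \log^6 m)$. The main obstacle is the Erdős--Szekeres step: the Goemans--Gupta--Jaillet chain arguments demand strict one-sided monotonic refinement, whereas Handler--Zang naturally produces an alternating sequence, so the delicate work is to verify that the extracted monotone sub-chain still satisfies the 4-factor doubling hypothesis of Lemma~\ref{thm:b_decrease_fast} with a submodular function compatible with the Handler--Zang update.
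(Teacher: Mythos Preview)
Your plan rests on a structural picture that does not match the object being analysed, and it skips the reduction that carries the argument. An evaluation of $s(\kappa)$ does not return a pair $(X_i,Y_i)$: by \eqref{eqn:eskappa-frac}, $s(\kappa)=\max_X\big(\kappa\,\partial_{\tilde c}(X)/\delta_{\tilde c}(X)-b(X)/\delta_{\tilde c}(X)\big)$, so the subroutine hands back a \emph{single} maximizing set $A_i$ and the subgradient $s_i=\partial_{\tilde c}(A_i)/\delta_{\tilde c}(A_i)$. There is thus no ``two-sided $X$/$Y$ alternation'' for Erd\H{o}s--Szekeres to untangle, and Lemma~\ref{thm:noneqnum} --- an unweighted counting argument relying on $\delta(X),\varrho(X)\in\{0,\dots,m\}$ --- has no analogue in the weighted setting to supply your $\mathcal{O}(m^2)$ factor. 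Finally, the theorem counts Handler--Zang iterations, not submodular minimizations, so the closing multiplication by the per-evaluation cost is out of place, and in any case $m^2(n^2+m\log^2 m)^2$ does not equal $n^4m^6\log^6 m$.

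What the paper actually does is avoid analysing Handler--Zang directly: it observes that the increasing left-endpoint sequence produced by Handler--Zang is pointwise dominated by the Discrete Newton iterates for the shifted function $\overline s(\kappa):=s(\kappa)-\sigma^*$ (and symmetrically on the right), so it suffices to bound one-sided Newton on $\overline s$. That analysis uses the standard inequality $\sigma_{i+1}/\sigma_i+s_{i+1}/s_i\le 1$ and splits on whether $s_{i+1}/s_i\le 2/3$. Erd\H{o}s--Szekeres is applied to the \emph{denominator} sequence $\delta_i=\delta_{\tilde c}(A_i)$, and Lemma~\ref{goemans} then gives $\mathcal{O}(m^4\log^4 m)$ ``fast'' steps. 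For a maximal run of ``slow'' steps one introduces the auxiliary submodular function $f(X)=\sigma_{i+l}\delta_{\tilde c}(X)+b(X)-\kappa_{i+l}\partial_{\tilde c}(X)$, shows the associated quantities $F_j=f(A_j)/\delta_j$ halve every two steps, and invokes Theorem~\ref{thm: submodfunc} (again after an Erd\H{o}s--Szekeres pass on $\delta_j$) to bound the run by $\mathcal{O}(n^4m^2\log^2 m)$. The product of these two bounds is the claimed $\mathcal{O}(n^4m^6\log^6 m)$. Your outline contains none of these three ingredients --- the $\overline s$ domination, the role of the denominators $\delta_i$, or the specific submodular $f$ on which the ring-family chain argument is run --- and without them the plan does not go through.
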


Note that during the execution of the algorithm, a set $X$ with
$\delta_{\tilde{c}}(X)=0$ can never be relevant when computing
(\ref{eq:s(kappa)weighted}).  Therefore it can be rewritten as
follows.
\begin{equation}\label{eqn:eskappa-frac}
  s(\kappa)= \max\bigg\{  \kappa \frac{\partial_{\tilde{c}}(X)}
  {\delta_{\tilde{c}}(X)} -
  \frac{b(X)}{\delta_{\tilde{c}}(X)}
  :
  \delta_{\tilde{c}}(X)>0 \bigg\}.
\end{equation}

Thus,
for any given value of $\kappa$ and set $X$ maximizing
Equation~(\ref{eqn:eskappa-frac}), the value
$\frac{ \partial_{\tilde{c}}(X)}{\delta_{\tilde{c}}(X)}$ is a
subgradient of $s(\kappa)$.

Now, let us assume that $\sigma^*$ is already known, and we are only
looking for the value of $\kappa^*$ for which $s(\kappa^*)=\sigma^*$,
i.\,e.\ we are looking for a root of the function
$\overline{s}(\kappa):=s(\kappa)-\sigma^*$. This could also be
computed by the Discrete Newton Method outlined in
Algorithm~\ref{alg:s_kappavonas}.

An important observation is the following. Let
$a'_1< a'_2< \cdots < a'_k=\kappa^*$ be all the \emph{distinct} values
computed by the Handler--Zang method during the minimization of
$s(\kappa)$ and let
$a'_1=\kappa_0<\kappa_1< \cdots < \kappa_l=\kappa^*$ be the values
computed by the Discrete Newton Method for finding the root of
$\overline{s}(\kappa)$ with the initial value $\kappa_0=a_1$.  Then
$\kappa_i\leq a'_i$ holds for all $i=1,\dots,k$, consequently $k<l$.The same holds for the sequence of
$b'_1 > b'_2 > \cdots > b'_r=\kappa^*$.

The Handler-Zang method
updates either $a_i$ or $b_i$ in every iteration, therefore we get
that the number of its iteration is at most twice the number of
iterations taken by the Discrete Newton Method to find the root of
$\overline{s}(\kappa)$. An upper bound on the latter quantity is given
below.

\begin{thm}
  The root of $\overline{s}(\kappa)$ can be computed using the
  Discrete Newton Method with $\mathcal{O}(n^4m^6\log^6(m))$
  iterations.
\end{thm}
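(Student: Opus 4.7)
By~\eqref{eqn:eskappa-frac},
\[
  s(\kappa) \;=\; \max_{X\colon \delta_{\tilde c}(X)>0}\bigl(\alpha_X\,\kappa + \beta_X\bigr),
  \qquad \alpha_X := \frac{\partial_{\tilde c}(X)}{\delta_{\tilde c}(X)},\quad \beta_X := -\frac{b(X)}{\delta_{\tilde c}(X)},
\]
so $\overline s(\kappa)=s(\kappa)-\sigma^*$ is convex and piecewise linear with minimum value~$0$ attained at $\kappa^*$. Approaching $\kappa^*$ from the left, the Discrete Newton Method produces iterates $\kappa_1<\kappa_2<\dots<\kappa^*$ with corresponding sets $X_i$ maximising $\overline s$ at $\kappa_i$, slopes $\alpha_i:=\alpha_{X_i}\le 0$, and residuals $r_i:=\overline s(\kappa_i)>0$. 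The plan is to closely follow the proof of Theorem~\ref{thm:s(kappa)computed}, adapted to the present convex-max setting.

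The first step is to establish the analogs of Claims~\ref{lem:newtonmon} and~\ref{lem:hi+ai<=1}: namely, $r_i$ strictly decreases to~$0$, $|\alpha_i|$ strictly decreases, and
\[
  \frac{r_{i+1}}{r_i}+\frac{|\alpha_{i+1}|}{|\alpha_i|}\;\le\;1,
\]
which follows from $\alpha_i\kappa_{i+1}+\beta_i=\sigma^*$ together with evaluating $X_{i+1}$'s linear piece at $\kappa_i$. Thus the iterations split into the set $I$ on which $|\alpha_{i+1}|/|\alpha_i|\le 2/3$ and the complement $J$ on which $r_{i+1}/r_i<1/3$.

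For the bound on $|I|$, I would apply Goemans' Lemma~\ref{goemans} to $\{-1,0,+1\}^A$ sign vectors read off from the entering and leaving edges of consecutive $X_i$. A subtle point here is that each $\alpha_i$ is a \emph{ratio} of two sums over $A$, so the halving condition must first be cross-multiplied to produce a single signed linear form over $A$ whose geometric decay is exactly what Goemans' lemma controls. For each maximal contiguous run in $J$, the residuals $r_i$ decrease geometrically, so after blocking into stretches of $\Theta(\log n)$ iterations the submodular ring-family machinery of Lemmas~\ref{thm:b_decrease_fast}--\ref{thm:chainringfamilies} and Theorem~\ref{thm: submodfunc} applies and forces each block to have length bounded by the $\binom{n+1}{2}+1$ chain bound.

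The principal obstacle, and the reason the final count is far larger than the $n^2+O(m\log^2 m)$ of Theorem~\ref{thm:s(kappa)computed}, is that $\overline s(\kappa)$ is never evaluated directly: every outer Newton iteration triggers an inner parametric submodular minimisation in the style of Theorem~\ref{thm:s(kappa)computed} in order to supply $s(\kappa_i)$ together with a maximising set. Consequently the Goemans/ring-family decomposition above must be carried out on both the outer (over $\kappa$) and the inner (over $\sigma$) level simultaneously. The delicate step is the bookkeeping that ensures the two counts compose multiplicatively without double counting sets that are reused across successive outer iterations. Once this nested accounting is in place, combining the outer and inner bounds derived by the Goemans--ring-family decomposition yields the announced $\mathcal{O}(n^4m^6\log^6 m)$ bound on the number of Newton iterations.
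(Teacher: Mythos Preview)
Your high-level decomposition into an $I$-part (where slopes shrink geometrically) and a $J$-part (where residuals shrink geometrically) matches the paper, but the two technical steps you sketch both have real gaps, and your diagnosis of where the large bound comes from is off.

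First, the $\mathcal{O}(n^4m^6\log^6 m)$ count has nothing to do with nesting outer and inner Newton loops. The theorem bounds only the number of \emph{outer} iterations; the inner cost of evaluating $s(\kappa)$ is accounted for separately and simply multiplies in at the end. The large exponents come from a different source: each slope $\alpha_i=\partial_{\tilde c}(X_i)/\delta_{\tilde c}(X_i)$ is a \emph{ratio} of two linear forms in $\tilde c$, so neither Goemans' Lemma~\ref{goemans} nor the ring-family bounds apply directly. Your proposed fix of ``cross-multiplying'' $\alpha_{i+1}\le\tfrac{2}{3}\alpha_i$ yields $3\,\partial_{\tilde c}(X_{i+1})\,\delta_{\tilde c}(X_i)\le 2\,\partial_{\tilde c}(X_i)\,\delta_{\tilde c}(X_{i+1})$, which is bilinear in the two indicator vectors and is \emph{not} of the form $b\cdot x_i$ required by Lemma~\ref{goemans}; so that step, as stated, does not go through.

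The paper resolves this with the Erd\H{o}s--Szekeres theorem (Theorem~\ref{thm:erdos-szekeres}): from the iterates one first extracts a monotone subsequence of the denominators $\delta_{\tilde c}(X_i)$ of length $\sqrt{k}$. Along a monotone-$\delta$ subsequence the ratio inequality can be converted into geometric decay of a genuine linear form ($\partial_{\tilde c}$ in one case, with a further split when $\delta$ is increasing), so that Lemma~\ref{goemans} applies; this yields $|I|=\mathcal{O}(m^4\log^4 m)$, the squaring coming from undoing the $\sqrt{\,\cdot\,}$. The same trick is needed for the $J$-runs: one shows $F_j:=\sigma_{i+l}-\sigma_j+(\kappa_j-\kappa_{i+l})s_j$ halves every two steps and has submodular numerator $f(X)=\sigma_{i+l}\delta_{\tilde c}(X)+b(X)-\kappa_{i+l}\partial_{\tilde c}(X)$, but since $F_j=f(A_j)/\delta_j$ one must again pass to a monotone-$\delta$ subsequence before Theorem~\ref{thm: submodfunc} can be invoked; this gives run-length $\mathcal{O}(n^4m^2\log^2 m)$. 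Multiplying these two bounds produces the announced count. Your plan does not mention Erd\H{o}s--Szekeres and, without it (or an equivalent device to tame the denominators), neither the $I$-bound nor the $J$-bound can be established.
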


\begin{algorithm}[!ht]
\begin{algorithmic}[1]
  \State{}$\kappa_1=a_1$, $i=1$
  \State{} $\sigma_i = s(\kappa_1)-\sigma^*$
  \While{$\sigma_i>0$}
  \State{} $i = i+1$
  \State{} $\kappa_{i} = \frac{b_{i-1}}{\partial_{i-1}}$
  \State{} $\sigma_{i} = s(\kappa_{i-1})-\sigma^*$
  \State{} $A_{i}= \argmax_{A\subseteq V}
  \frac{\kappa_i \partial_{\tilde{c}}(A)-b(A)}  {\delta_{\tilde{c}}(A)}$
  \State{} $b_i=b(A_i),
  \quad \partial_i=\partial_{\tilde{c}}(A_i),
  \quad \delta_i=\delta_{\tilde{c}}(A_i),
  \quad s_i=\frac{\partial_i}{\delta_i}$
  \EndWhile{}
\end{algorithmic}
\caption{Discrete Newton Method for
  $\overline{s}(\kappa)$}\label{alg:s_kappavonas}
\end{algorithm}

\begin{claim}\label{thm:lessthan1}
  \begin{equation}
    \frac{\sigma_{i+1}}{\sigma_i}+\frac{s_{i+1}}{s_i}\leq 1
    \label{eq:lessthan1}
  \end{equation}
  \qedwhite{}
\end{claim}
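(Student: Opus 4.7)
The plan is to derive the inequality from the convex piecewise-linear structure of $s(\kappa)$ exhibited in~(\ref{eqn:eskappa-frac}), following the same Discrete Newton template that underlies Claim~\ref{lem:hi+ai<=1}. Writing $\ell_X(\kappa):=\kappa\,\partial_{\tilde{c}}(X)/\delta_{\tilde{c}}(X)-b(X)/\delta_{\tilde{c}}(X)$, we have $s(\kappa)=\max_X \ell_X(\kappa)$, so the affine function $\ell_{A_{i+1}}$ has slope $s_{i+1}$, agrees with $s$ at $\kappa_{i+1}$ (since $A_{i+1}$ is the argmax there), and lies below $s$ at $\kappa_i$ (as $s$ is the pointwise maximum of such lines).

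Comparing these two uses of $\ell_{A_{i+1}}$ immediately gives $s(\kappa_i)-s(\kappa_{i+1}) \geq (\kappa_i-\kappa_{i+1})\,s_{i+1}$, which via $\sigma_j=s(\kappa_j)-\sigma^*$ becomes $\sigma_i - \sigma_{i+1} \geq (\kappa_i-\kappa_{i+1})\,s_{i+1}$. The second ingredient is the Newton update in Algorithm~\ref{alg:s_kappavonas}: by construction, $\kappa_{i+1}$ is the abscissa where the tangent of slope $s_i$ through $(\kappa_i, s(\kappa_i))$ meets the target value $\sigma^*$, so after substituting $s(\kappa_i)=\kappa_i s_i-b_i/\delta_i$ one reads off the identity $\kappa_i - \kappa_{i+1} = \sigma_i/s_i$. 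Plugging this into the previous display and dividing by $\sigma_i>0$ (guaranteed by the \textbf{while} guard) yields exactly the claimed inequality $\sigma_{i+1}/\sigma_i + s_{i+1}/s_i \leq 1$.

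The only real subtlety I anticipate is sign bookkeeping. When $\kappa_i$ sits on the decreasing branch of the convex function $s$, the subgradient $s_i$ is negative, and consequently $\kappa_i-\kappa_{i+1}=\sigma_i/s_i$ is negative as well; but the inequality $s(\kappa_i)-s(\kappa_{i+1})\geq (\kappa_i-\kappa_{i+1})\,s_{i+1}$ remains correct regardless of sign, because it is derived from an affine underestimator. Within one monotone phase of the Newton iteration the slopes $s_i$ and $s_{i+1}$ share their sign, so $s_{i+1}/s_i$ is non-negative and the division by $\sigma_i>0$ preserves the direction of the inequality. Beyond this sign check --- and a minor reconciliation between the compact form of the update in line~5 of Algorithm~\ref{alg:s_kappavonas} and the tangent-crossing derivation above --- the argument is purely algebraic and parallels the proof of Claim~\ref{lem:hi+ai<=1}.
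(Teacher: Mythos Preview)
Your argument is correct. The paper does not actually prove this claim: it is stated with only the $\qedwhite$ marker, exactly like Claim~\ref{lem:hi+ai<=1}, and is treated as a generic property of the Discrete Newton Method. Your derivation via the affine underestimator $\ell_{A_{i+1}}$ together with the Newton identity $\kappa_i-\kappa_{i+1}=\sigma_i/s_i$ (which the paper itself invokes verbatim in the proof of Claim~\ref{thm:kappadistance}) is precisely the standard proof of this inequality, so you have simply filled in what the paper left implicit.
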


An upper bound on the number of iterations is given below. For this
purpose, the indices are divided into parts, depending on the value
of $\frac{s_{i+1}}{s_i}$. Note that it is between $0$ and $1$, since
$|s_{i}|$ is decreasing.

\begin{lem}\label{thm:I1_size}
There are $\mathcal{O}(m^4\log^4(m))$ iterations such that
$\frac{s_{i+1}}{s_i}\leq \frac{2}{3}$.
\end{lem}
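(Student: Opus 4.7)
The plan is to apply Goemans' Lemma (Lemma~\ref{goemans}) after using the Erd\H{o}s--Szekeres theorem (Theorem~\ref{thm:erdos-szekeres}) twice in order to extract a sub-subsequence of $I$ on which both the numerators and denominators of the slopes behave monotonically. Let $\partial_i := \partial_{\tilde c}(A_i)$ and $\delta_i := \delta_{\tilde c}(A_i)>0$, so that $s_i = \partial_i/\delta_i$. The Newton iteration applied to the piecewise-linear convex function $\overline{s}(\kappa)$ produces a monotone sequence of slopes $(s_i)$ (the analogue of Claim~\ref{lem:newtonmon}); consequently, along any subsequence of $I$, consecutive ratios $s_{k+1}/s_k$ still satisfy the bound $\le 2/3$, because in the telescoped product of intermediate ratios each factor is at most $1$ and at least one factor originates from $I$.

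I would first apply Erd\H{o}s--Szekeres to $(\partial_i)_{i\in I}$ to obtain a monotone subsequence of length at least $\sqrt{|I|}$, and then once more to $(\delta_i)$ restricted to that subsequence to obtain a sub-subsequence $I''$ of length at least $|I|^{1/4}$ on which both $(\partial_i)$ and $(\delta_i)$ are monotone. The monotonicity of the two sequences combined with $s_{k+1}/s_k \le 2/3$ then forces at least one of $|\partial_i|$ or $\delta_i$ to change by a multiplicative factor of at least $3/2$ per consecutive pair: when both sequences are non-increasing, $|\partial_i|$ must shrink; when both are non-decreasing, $\delta_i$ must grow; the case $\partial_i$ non-decreasing with $\delta_i$ non-increasing is excluded by $(s_i)$ being decreasing; and in the remaining mixed case ($\partial_i$ non-increasing, $\delta_i$ non-decreasing) I would invoke Claim~\ref{thm:lessthan1} to obtain the additional geometric decrease $\sigma_{i+1}/\sigma_i \le 1/3$ of a suitable $\tilde c$-linear combination with $\{-1,0,+1\}$-coefficients drawn from $b$ and the $A_i$'s.

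Finally, since $\partial_{\tilde c}(X) = \tilde c\cdot v$ for a vector $v\in\{-1,0,+1\}^A$ encoding edge orientation with respect to $X$, and $\delta_{\tilde c}(X) = \tilde c\cdot w$ for $w\in\{0,1\}^A$, Goemans' lemma with $p=m$ bounds the length of any geometrically decreasing subsequence by $\mathcal{O}(m\log m)$; the ratio $2/3$ rather than $1/2$ costs only a factor of $2$, obtained by pairing consecutive terms so that the product of two steps is $\le 4/9 < 1/2$. Combining with the two $\sqrt{\cdot}$ losses from Erd\H{o}s--Szekeres gives $|I|^{1/4} = \mathcal{O}(m\log m)$, i.\,e.\ $|I| = \mathcal{O}(m^4\log^4 m)$. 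The main obstacle is the mixed-monotonicity case: there $s_i$ decreases for two combined reasons, and neither $|\partial_i|$ nor $\delta_i$ alone need shrink by a constant factor; identifying the correct $\{-1,0,+1\}$-vector (built from $b$, $\partial_{\tilde c}$ and $\delta_{\tilde c}$) whose $\tilde c$-weighted sum inherits the geometric decrease promised by Claim~\ref{thm:lessthan1} is the delicate point to which Goemans' Lemma is ultimately applied.
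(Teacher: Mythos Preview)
Your overall plan --- extract a monotone subsequence via Erd\H{o}s--Szekeres and feed a geometrically decaying quantity into Lemma~\ref{goemans} --- matches the paper's, and your three ``easy'' cases are handled correctly. The gap is in the mixed case ($\partial_i$ non-increasing, $\delta_i$ non-decreasing), which you yourself flag as the delicate point.

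Your proposed fix there does not work. The appeal to Claim~\ref{thm:lessthan1} has the implication backwards: for $i\in I$ you have $s_{i+1}/s_i\le \tfrac{2}{3}$, and the claim then only gives $\sigma_{i+1}/\sigma_i \le 1 - s_{i+1}/s_i$, which can be arbitrarily close to $1$; it is the \emph{complementary} set of indices (those with $s_{i+1}/s_i>\tfrac{2}{3}$) that enjoys $\sigma_{i+1}/\sigma_i<\tfrac{1}{3}$. Even if the bound held, $\sigma_i = s(\kappa_i)-\sigma^*$ is not a $\tilde c$-weighted $\{-1,0,+1\}$-combination --- it involves $b(A_i)$, the iterates $\kappa_i$, and the unknown $\sigma^*$ --- so Lemma~\ref{goemans} is not applicable to the sequence $(\sigma_i)$. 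As written, the mixed case therefore remains unbounded, and the claimed $|I|^{1/4}=\mathcal{O}(m\log m)$ is not established.

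The paper sidesteps the difficulty by applying Erd\H{o}s--Szekeres only \emph{once}, to the $\delta$-sequence, so that only two cases arise. When $\delta$ is non-increasing, $\partial$ drops by a factor $\le\tfrac{2}{3}$ per step and Goemans gives length $\mathcal{O}(m\log m)$. When $\delta$ is non-decreasing, the paper does not try to make $\partial$ monotone; instead it marks the indices where $\delta$ jumps by a factor $\ge\tfrac{4}{3}$ (at most $\mathcal{O}(m\log m)$ of them, by Goemans on $\delta$) and observes that on each run between consecutive marks $\partial$ must drop by a factor $\le\tfrac{2}{3}\cdot\tfrac{4}{3}=\tfrac{8}{9}$, so each run has length $\mathcal{O}(m\log m)$ by Goemans on $\partial$. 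This yields $\sqrt{|I|}=\mathcal{O}(m^2\log^2 m)$ and hence the stated bound. Your double-Erd\H{o}s--Szekeres route can in fact be rescued by the same splitting trick in the mixed case (using the extra monotonicity to separate steps where $\partial$ shrinks by a factor $\le\sqrt{2/3}$ from those where $\delta$ grows by $\ge\sqrt{3/2}$, each family bounded by Goemans), but not via $\sigma_i$.
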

\begin{proof}
  Let $i_1, i_2, \dots, i_k$ be the indices for which
  $\frac{s_{i_j+1}}{s_{i_j}}\leq \frac{2}{3}$ holds and consider the sequence
  $\delta_{i_1},\delta_{i_2},\dots ,\delta_{i_k}$.  By
  Theorem~\ref{thm:erdos-szekeres}, there exists a monotonic
  subsequence $\delta_{i'_1}, \delta_{i'_2},\dots , \delta_{i'_{k'}}$
  of length $k'=\left\lfloor\sqrt{k}\right\rfloor$.

  Let us assume that all the subgradients computed by the Newton
  method are positive i.\,e. $\partial_i> 0$. (The case of
  negative subgradients is proved the same way.)

  \paragraph{Case 1. $\delta_{i'_1}\geq \delta_{i'_2} \geq\cdots \geq \delta_{i'_{s'}}$.}
  Then
  \begin{equation*}
    \partial_{i'_{j+1}}
    = s_{i'_{j+1}}\delta_{i'_{j+1}}
    \leq s_{i'_{j+1}}\delta_{i'_{j}}
    \leq \frac{2}{3}s_{i'_{j}}\delta_{i'_{j}}
    = \frac{2}{3}\partial_{i'_j}
  \end{equation*}
  therefore
  \begin{equation*} \partial_{i'_{j+2}}\leq
    \frac{2}{3}\partial_{i'_{j+1}} \leq
    \frac{4}{9}\partial_{i'_{j}} \leq
    \frac{1}{2}\partial_{i'_j}
  \end{equation*}
  holds for all $j=1,\dots,k'-2$. Observe, that with
  \begin{align*}
    b_i&=\tilde{c}(e_i) \quad 1\leq i \leq |E| \\ 
    x_i&= \begin{cases}
      1 &e_i\in \varrho(A_i)\\
      -1 &e_i\in \delta(A_i)\\
      0 &\text{otherwise}
    \end{cases}
  \end{align*}
  the values of $\partial_i$ are of the form required by Lemma~\ref{goemans}.
  Therefore we get $k'=\mathcal{O}(m\log(m))$.
  
  \paragraph{Case 2. $\delta_{i'_1}\leq \delta_{i'_2} \leq\cdots \leq \delta_{i'_{k'}}$.}
  Similarly to Case 1, one can prove that there are
  $\mathcal{O}(m\log(m))$ indices such that
  $\frac{4}{3}\delta_{i'_{j'}} \leq\delta_{i'_{j'+1}}$.

  Now let us consider a subsequence of consecutive indices
  $j_1, j_2,\dots, j_h$ such that
  $\frac{4}{3}\delta_{j_{t}} > \delta_{j_{t+1}}$. Now we get that

  \begin{equation*}
    \frac{8}{9}\partial_{j_t}\geq  \partial_{j_{t+1}}
  \end{equation*}
  holds for all $1\leq t \leq h-1$. By Lemma~\ref{goemans}, we get that
  the length of such a subsequence is
  $\mathcal{O}(m\log(m))$.  Therefore the total length $k'$ of the
  whole sequence is
  $\mathcal{O}(m\log(m)\cdot~ m\log(m))=\mathcal{O}(m^2\log(m^2))$.
\end{proof}

Now let us consider a sequence of consecutive iterations
$i, \dots, i+l$ such that $\frac{s_{j+1}}{s_j}\geq \frac{2}{3}$
for $j=i, \dots, i+l$. Note that by Claim~\ref{thm:lessthan1}
$\frac{\sigma_{j+1}}{\sigma_j}\leq \frac{1}{3}$ also holds for these
indices.

\begin{claim}\label{thm:kappadistance}
  For any $j=i,\dots,i+l-1$, we have
  \begin{equation}
    \kappa_{j+1}-\kappa_{i+l}\leq \frac{1}{2}(\kappa_{j}-\kappa_{i+l})
    \label{eq:kappadistance}.
  \end{equation}
\end{claim}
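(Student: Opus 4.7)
The plan is to translate the Newton dynamics into quantitative step-size estimates. Each Newton increment $\kappa_k - \kappa_{k+1}$ is, up to sign, the ratio of a residual $\sigma_{k+1}$ (which shrinks at rate at most $1/3$ by Claim~\ref{thm:lessthan1}) to a subgradient $s_k$ (whose magnitude shrinks at rate at most $2/3$ by the defining hypothesis of this block); the increments themselves therefore shrink geometrically at rate at most $1/2$, and summing the resulting geometric series produces exactly the claimed distance-halving.

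I would first re-derive the Newton update in a convenient form. Because $\kappa_{k+1}$ is the point at which the affine lower bound $\kappa\mapsto s_k\kappa - b_k/\delta_k$ of $s$, realised by the maximiser $A_k$ in~\eqref{eqn:eskappa-frac}, attains the value $\sigma^*$, a one-line rearrangement gives
\begin{equation*}
\kappa_k-\kappa_{k+1} \;=\; \frac{\sigma_{k+1}}{s_k}.
\end{equation*}
Since $s_{k+1}/s_k\geq 2/3>0$ throughout the block, the subgradients $s_k$ have constant sign on $[i,i+l]$, so the iterates $\kappa_i,\kappa_{i+1},\ldots,\kappa_{i+l}$ are monotone; by symmetry I may assume they are decreasing, whence all increments above are positive.

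Next, I would compound the two decay rates. By hypothesis and Claim~\ref{thm:lessthan1}, for every $k\in[i,i+l-1]$ one has $s_{k+1}\geq \tfrac{2}{3}s_k$ and $\sigma_{k+1}\leq \tfrac{1}{3}\sigma_k$. Iterating these bounds yields $\sigma_{k+1}\leq (1/3)^{k-j}\sigma_{j+1}$ and $s_k\geq (2/3)^{k-j} s_j$ for all $k\geq j$ in the block, so that
\begin{equation*}
\kappa_k-\kappa_{k+1}\;=\;\frac{\sigma_{k+1}}{s_k}\;\leq\; \Bigl(\tfrac{1}{2}\Bigr)^{k-j}\bigl(\kappa_j-\kappa_{j+1}\bigr).
\end{equation*}
Telescoping the remaining increments and summing the resulting geometric series then gives
\begin{equation*}
\kappa_{j+1}-\kappa_{i+l}\;=\;\sum_{k=j+1}^{i+l-1}(\kappa_k-\kappa_{k+1})\;\leq\; \bigl(\kappa_j-\kappa_{j+1}\bigr)\sum_{m=1}^{\infty}\Bigl(\tfrac{1}{2}\Bigr)^m\;=\;\kappa_j-\kappa_{j+1}.
\end{equation*}
Adding this to the identity $\kappa_j-\kappa_{i+l}=(\kappa_j-\kappa_{j+1})+(\kappa_{j+1}-\kappa_{i+l})$, which uses monotonicity of the iterates, delivers $\kappa_j-\kappa_{i+l}\geq 2(\kappa_{j+1}-\kappa_{i+l})$, which is~\eqref{eq:kappadistance}.

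The calculation itself is routine; the only real subtlety is bookkeeping. One must keep Algorithm~\ref{alg:s_kappavonas}'s offset convention (in which $\sigma_{k+1}$ is the residual measured at $\kappa_k$, not at $\kappa_{k+1}$) aligned with the Newton increments so that Claim~\ref{thm:lessthan1} is invoked on the correct pair of consecutive indices, and one must verify that the assumed hypothesis $s_{k+1}/s_k\geq 2/3$ extends over the whole block so the geometric compounding is legitimate. Once that alignment is pinned down, the factor $\tfrac{1}{2}$ in the statement arises purely from the ratio $\tfrac{1/3}{2/3}$ of the two competing rates, which is precisely the reason the block was defined with the threshold $\tfrac{2}{3}$ on $s_{k+1}/s_k$ in the first place.
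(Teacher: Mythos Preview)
Your argument is correct and follows essentially the same route as the paper: both show that consecutive Newton increments shrink by a factor $\tfrac{1}{2}$ via the ratio $\tfrac{1/3}{2/3}$, and then sum the resulting geometric tail to obtain the distance-halving inequality. The paper merely states the one-step halving $\kappa_{j'}-\kappa_{j'+1}\le\tfrac12(\kappa_{j'-1}-\kappa_{j'})$ and leaves the telescoping implicit, whereas you spell it out; the only cosmetic difference is your offset convention $\kappa_k-\kappa_{k+1}=\sigma_{k+1}/s_k$ versus the paper's $\sigma_{j'}/s_{j'}$, which does not affect the argument.
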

\begin{proof}
  The claim can easily be proved by the fact that
  \begin{align*}
    \kappa_{j'}-\kappa_{j'+1}
    = \frac{\sigma_{j'}}{s_{j'}}
    =\frac{\sigma_{j'-1}}{s_{j'-1}}
    \cdot\frac{\sigma_{j'}}{\sigma_{j'-1}}\cdot\frac{s_{j'-1}}{s_{j'}}
    \leq \frac{\sigma_{j'-1}}{s_{j'-1}}\cdot \frac{1}{3}\cdot\frac{3}{2}
    = \frac{1}{2} (\kappa_{j'-1}-\kappa_{j'})
  \end{align*}
    holds for each $j'=i,\dots,i+l-1$.
\end{proof}

\begin{claim}\label{fast_decrease}
\begin{equation*} We have
  F_j\geq 2 F_{j+2} \quad \forall i\leq j \leq i+l-2,
\end{equation*}
where
\begin{equation*}
  F_j:= \sigma_{i+l}-(\sigma_j-(\kappa_{j}-\kappa_{i+l})s_{j}).
\end{equation*}
\end{claim}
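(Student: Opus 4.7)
The plan is to simplify $F_j$ into a closed form using the Newton update, and then deduce the target bound from the contraction estimates already established in Claims~\ref{thm:lessthan1} and \ref{thm:kappadistance}. Set $\Delta_j := \kappa_j - \kappa_{i+l}$. The Newton step $\kappa_{j+1} = \kappa_j - \sigma_j/s_j$ becomes the identity $\sigma_j = s_j(\Delta_j - \Delta_{j+1})$, so substituting $s_j\Delta_j - \sigma_j = s_j\Delta_{j+1}$ into the definition of $F_j$ yields
\[
F_j = \sigma_{i+l} + s_j\Delta_{j+1}\qquad\text{for every } i\leq j\leq i+l-1.
\]
In the boundary case $j = i+l-2$ the claim reduces to $F_j\geq 2F_{i+l} = 0$, which holds because $s_j$ is a subgradient of the convex function $\bar s$ at $\kappa_j$ and so $F_j\geq 0$. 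I may therefore assume $i\leq j\leq i+l-3$, so that the identity above applies to both $F_j$ and $F_{j+2}$.

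Next I would assemble three estimates valid on the chosen subsequence, where by assumption $s_{j+1}/s_j\geq 2/3$: (i) $s_{j+2}\leq s_j$, since $|s_{\cdot}|$ is nonincreasing along the Newton iteration; (ii) $\Delta_{j+3}\leq \Delta_{j+1}/4$, obtained by applying Claim~\ref{thm:kappadistance} twice; and (iii) $\sigma_{i+l}\leq \tfrac13 s_j\Delta_{j+1}$. For (iii), Claim~\ref{thm:lessthan1} at index $j+1$ combined with $s_{j+2}/s_{j+1}\geq 2/3$ forces $\sigma_{j+2}\leq \sigma_{j+1}/3$; since $\sigma_\cdot$ is decreasing, $\sigma_{i+l}\leq \sigma_{j+2}\leq \sigma_{j+1}/3$; and the Newton identity once more yields $\sigma_{j+1} = s_{j+1}(\Delta_{j+1}-\Delta_{j+2})\leq s_{j+1}\Delta_{j+1}\leq s_j\Delta_{j+1}$.

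Plugging these three estimates into the identity,
\[
F_j-2F_{j+2}
= s_j\Delta_{j+1}-2s_{j+2}\Delta_{j+3}-\sigma_{i+l}
\geq s_j\Delta_{j+1}\left(1-\tfrac12-\tfrac13\right)
= \tfrac{1}{6}s_j\Delta_{j+1}\geq 0,
\]
which is the desired inequality. The main obstacle is the first step: the defining expression of $F_j$ looks like a delicate gap between a convex function and its tangent evaluated at the auxiliary point $\kappa_{i+l}$, and only after spotting the collapse $F_j = \sigma_{i+l}+s_j\Delta_{j+1}$ does the proof reduce to multiplying together the three already-known contraction factors $1/2$, $1/4$, and $1/3$.
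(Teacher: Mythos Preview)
Your proof is correct and rests on the same core identity as the paper's: both arguments collapse $F_j$ to $\sigma_{i+l}+s_j\Delta_{j+1}$ via the Newton update, and both invoke Claim~\ref{thm:kappadistance} and the monotonicity of $s_j$. The only difference is in how the extra $\sigma_{i+l}$ term is handled. You bound it separately via your estimate~(iii), which costs you a boundary case at $j=i+l-2$ and the constraint $j\le i+l-3$. The paper instead leaves $F_{j+2}$ in its \emph{defining} form $F_{j+2}=\sigma_{i+l}-\sigma_{j+2}+s_{j+2}\Delta_{j+2}$ rather than the collapsed form, and then the chain
\[
F_j \;\ge\; s_j\Delta_{j+1}\;\ge\; 2s_j\Delta_{j+2}\;\ge\; 2s_{j+2}\Delta_{j+2}\;\ge\; 2(\sigma_{i+l}-\sigma_{j+2})+2s_{j+2}\Delta_{j+2}\;=\;2F_{j+2}
\]
goes through for all $i\le j\le i+l-2$ in one stroke, since the last inequality just uses $\sigma_{j+2}\ge\sigma_{i+l}$. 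So the paper's route is a bit shorter---one application of Claim~\ref{thm:kappadistance} instead of two, no estimate~(iii), no boundary case---but the substance is the same.
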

\begin{proof}
  Note that
  \begin{align*}
    &\sigma_j-(\kappa_{j}-\kappa_{j+1})s_{j}
      = \sigma_j-\frac{\sigma_j}{s_j}s_{j}=0, \\
    &\sigma_{j}\geq \sigma_{i+l}\geq 0,
  \end{align*}
  and
  \begin{align*}
    &s_{j}\geq s_{j+1}
  \end{align*}
  hold for all $j=i,\dots, i+l$. Then the claim is proved as follows:
  \begin{align*}
    F_j= & \sigma_{i+l}-(\sigma_j-(\kappa_{j}-\kappa_{i+l})s_{j})
           = \sigma_{i+l}-\sigma_j+(\kappa_{j}-\kappa_{i+l})s_{j} \\
    =&\sigma_{i+l}-(\sigma_j-(\kappa_{j}-\kappa_{j+1})s_{j})
       +(\kappa_{j+1}-\kappa_{i+l})s_{j}
       \geq (\kappa_{j+1}-\kappa_{i+l})s_{j} \\
    \geq& 2(\kappa_{j+2}-\kappa_{i+l})s_{j}
          \geq  2(\kappa_{j+2}-\kappa_{i+l})s_{j+2} \\
    \geq& 2( \sigma_{i+l}-\sigma_{j+2})
          + 2(\kappa_{j+2}-\kappa_{i+l})s_{j+2}\\
    =&2(\sigma_{i+l}-\sigma_{j+2}+ (\kappa_{j+2}-\kappa_{i+l})s_{j+2})
       = 2F_{j+2}.
\end{align*}
\end{proof}

\begin{claim} We have 
  \begin{equation}\label{eq:F_definition}
      \displaystyle F_j =
  \frac{\sigma_{i+l}\delta_j+b_j-\kappa_{i+l}\partial_j}{\delta_j}.
  \end{equation}
  Furthermore, the expression
  \[
    f(X):=\sigma_{i+l}\delta_{\tilde{c}}(X)+b(X)
    -\kappa_{i+l}\partial_{\tilde{c}}(X)
  \]
  appearing in the numerator in \eqref{eq:F_definition} is a submodular function.
\end{claim}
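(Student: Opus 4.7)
The plan is to verify the identity by direct substitution and then to show submodularity of the numerator by decomposing it into its natural summands.

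For the identity, I would start from $F_j=\sigma_{i+l}-\sigma_j+(\kappa_j-\kappa_{i+l})s_j$, substitute $s_j=\partial_j/\delta_j$, and use the fact that $A_j$ attains the maximum in~\eqref{eqn:eskappa-frac} at $\kappa=\kappa_j$, which gives $s(\kappa_j)=(\kappa_j\partial_j-b_j)/\delta_j$ and therefore expresses $\sigma_j$ in closed form. Placing everything over the common denominator $\delta_j$, the $\kappa_j\partial_j$ contributions cancel and one is left with $(s(\kappa_{i+l})\delta_j+b_j-\kappa_{i+l}\partial_j)/\delta_j$, matching the stated formula (interpreting $\sigma_{i+l}$ in the numerator as $s(\kappa_{i+l})$; any additive shift by $\sigma^*$ appears only as a scalar multiple of $\delta_j$, which does not affect the submodularity argument below).

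For the submodularity of $f(X)=\sigma_{i+l}\delta_{\tilde c}(X)+b(X)-\kappa_{i+l}\partial_{\tilde c}(X)$, I would argue term by term. The function $b$ is submodular by hypothesis. Since $\tilde c\geq 0$, the cut function $\delta_{\tilde c}$ is submodular (as observed just after the definitions of $\varrho_x,\delta_x$ in the Preliminaries), and the coefficient $\sigma_{i+l}$ is nonnegative, because the Newton iterates $\sigma_j$ stay nonnegative throughout the while-loop of Algorithm~\ref{alg:s_kappavonas}; hence the first summand is submodular. The crucial observation is that $\partial_{\tilde c}(X)=\varrho_{\tilde c}(X)-\delta_{\tilde c}(X)$ is in fact \emph{modular}, since it can be written as the sum over $v\in X$ of the net weighted in-degree at $v$, which is an additive function of $X$. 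Subtracting a modular function from a submodular function preserves submodularity, so $f$ is submodular.

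The main obstacle is really bookkeeping rather than mathematics: keeping the $\sigma^*$ shift straight between the algorithmic convention $\sigma_j=s(\kappa_j)-\sigma^*$ and the statement of the identity, and checking that the coefficient of $\delta_{\tilde c}$ is nonnegative. Once these are settled, the claim gives exactly what is needed to evaluate (or lower-bound) $F_j$ via a single submodular function minimization over $V$.
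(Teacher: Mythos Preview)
Your proposal is correct and follows essentially the same route as the paper: the identity is obtained by substituting $s_j=\partial_j/\delta_j$ and $\sigma_j=(\kappa_j\partial_j-b_j)/\delta_j$ into the definition of $F_j$ so that the $\kappa_j\partial_j$ terms cancel, and submodularity of $f$ is shown by combining the submodularity of $b$ and of $\delta_{\tilde c}$ (with nonnegative coefficient $\sigma_{i+l}$) with the modularity of $\partial_{\tilde c}$. Your explicit remarks that $\partial_{\tilde c}$ is modular, that $\sigma_{i+l}\geq 0$, and that the $\sigma^*$ shift is only a bookkeeping issue are all points the paper either uses silently or glosses over, so if anything your write-up is cleaner on these details.
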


\begin{proof}
  \begin{align*}
    F_j=& \sigma_{i+l}-(\sigma_j-(\kappa_{j}-\kappa_{i+l})s_{j})
          = \sigma_{i+l}-\frac{\partial_j}{\delta_j}\kappa_j
          +\frac{b_j}{\delta_j}+\kappa_{j}s_j-\kappa_{i+l}s_j \\
    =&\sigma_{i+l}+\frac{b_j}{\delta_j}
       -\kappa_{i+l}(\frac{\partial_j}{\delta_j})
       = \frac{\sigma_{i+l}\delta_j+b_j-\kappa_{i+l}\partial_j}{\delta_j}
  \end{align*}
  and
  \begin{align*}
    f(X)=&\sigma_{i+l}\delta_{\tilde{c}}(X)+b(X)
           -\kappa_{i+l}\partial_{\tilde{c}}(X)
           + \sigma_{i+l}\delta_{\tilde{c}}(Y)+b(Y)
           -\kappa_{i+l}\partial_{\tilde{c}}(Y)  \\
    =&\sigma_{i+l}(\delta_{\tilde{c}}(X)
       +\delta_{\tilde{c}}(Y))+(b(X)+b(Y))
       -\kappa_{i+l}(\partial_{\tilde{c}}(X)+\partial_{\tilde{c}}(Y))\\
    \geq & \sigma_{i+l}(\delta_{\tilde{c}}(X\cup Y)
           +\delta_{\tilde{c}}(X\cap Y))+(b(X\cup Y)+b(X\cap Y))\\
         &-\kappa_{i+l}(\partial_{\tilde{c}}(X\cup Y)
           +\partial_{\tilde{c}}(X\cap Y)) \\
    =&\sigma_{i+l}\delta_{\tilde{c}}(X\cup Y)+b(X \cup Y)
       -\kappa_{i+l}\partial_{\tilde{c}}(X\cup Y) \\
         &+\sigma_{i+l}\delta_{\tilde{c}}(X\cap Y)
           +b(X\cap Y)-\kappa_{i+l}\partial_{\tilde{c}}(X\cap Y) 
  \end{align*}
\end{proof}

\begin{thm}\label{thm:consecutive}
  Let $i, i+1, \dots , i+l$ be consecutive indices such that
  $\frac{\sigma_{j+1}}{\sigma_j}\leq \frac{1}{3}$. Then $l$ is at most
  $\mathcal{O}(n^4m^2\log^2(m))$.
\end{thm}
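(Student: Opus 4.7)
The plan is to exploit Claim~\ref{fast_decrease} together with the Erdős--Szekeres theorem and the submodular ring-family machinery of Lemmas~\ref{thm:b_decrease_fast} and~\ref{thm:chainringfamilies} (equivalently, Theorem~\ref{thm: submodfunc}). The key observation is that $F_j = f(X_j)/\delta_j$ for the submodular function $f(X) := \sigma_{i+l}\delta_{\tilde{c}}(X)+b(X)-\kappa_{i+l}\partial_{\tilde{c}}(X)$ identified in the previous claim; hence the geometric decay of $F_j$ along every second index translates into a geometric decay of $f$-values on $X_j$ once the growth of $\delta_{\tilde{c}}(X_j)$ is kept under control.

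Concretely, I would first pass to the sub-sample of indices $i, i+6, i+12,\dots$; on this sub-sample, three applications of Claim~\ref{fast_decrease} give $F_{j_{t+1}}\le 2^{-3}F_{j_t}$, and Theorem~\ref{thm:erdos-szekeres} produces a monotone sub-subsequence $\{X_{j_{t_r}}\}_{r=1}^K$ of length $K=\Omega(\sqrt{l})$ in the $\delta_{\tilde{c}}$-values. If $\delta_{\tilde{c}}(X_{j_{t_r}})$ is non-increasing in $r$, then $f(X_{j_{t_{r+1}}}) \le f(X_{j_{t_r}})/8$. If it is non-decreasing, I would further partition the sub-subsequence into maximal runs separated by ``doubling'' indices where $\delta_{\tilde{c}}(X_{j_{t_{r+1}}})\ge 2\delta_{\tilde{c}}(X_{j_{t_r}})$; inside any such run the step-wise ratio is less than $2$, whence $f(X_{j_{t_{r+1}}})/f(X_{j_{t_r}}) \le (1/8)\cdot 2 = 1/4$. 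Applying Lemma~\ref{goemans} to the reversed sequence of $\{0,1\}$-indicator vectors $\chi_{\tilde\delta(X_{j_{t_r}})}$ weighted by $\tilde{c}$ bounds the number of doubling indices, and hence the number of runs, by $\mathcal{O}(m\log m)$.

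In every case the reversed sub-subsequence of distinct sets (prepended, if needed, by a global minimizer of $f$) satisfies the hypotheses of Theorem~\ref{thm: submodfunc} for $f$, so each block has length $\mathcal{O}(n^2)$. Thus the non-increasing case contributes length $\mathcal{O}(n^2)$, while the non-decreasing case contributes $\mathcal{O}(m\log m)$ runs of length $\mathcal{O}(n^2)$ each, totalling $\mathcal{O}(n^2 m\log m)$. This bounds $K=\mathcal{O}(n^2 m\log m)$, and $l=\mathcal{O}(K^2)=\mathcal{O}(n^4 m^2\log^2 m)$ follows.

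The principal technical hurdle is the clean invocation of Theorem~\ref{thm: submodfunc}, since its hypotheses insist that the first set of the sequence be the global minimizer of $f$ and the second exceed $-2 f_{\min}$; this forces a prepending step and possibly a constant-length truncation of the reversed sub-subsequence, which shifts the bound only by an $\mathcal{O}(n^2)$ additive constant. Secondary concerns include confirming $\delta_{\tilde{c}}(X_{j_t})>0$ throughout so that $f(X_{j_t})\ge 0$, handling ties in the $\delta_{\tilde{c}}$-values during the Erdős--Szekeres step, and verifying the indices extraction that makes Lemma~\ref{goemans} applicable to the doubling subsequence.
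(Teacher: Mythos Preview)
Your proposal is correct and follows essentially the same route as the paper: Erd\H{o}s--Szekeres on the $\delta_{\tilde c}$-values of the sets $A_j$, a case split on the direction of monotonicity, Lemma~\ref{goemans} to bound the doubling indices in the non-decreasing case, and Theorem~\ref{thm: submodfunc} applied to the submodular function $f$ to cap each block at $\mathcal{O}(n^2)$. The only cosmetic difference is that you thin the index set (every sixth) \emph{before} Erd\H{o}s--Szekeres, whereas the paper thins \emph{after} (taking every fourth or eighth term of the monotone subsequence); this is immaterial.

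Where the two arguments genuinely differ is in meeting the hypotheses of Theorem~\ref{thm: submodfunc}, which is exactly the hurdle you flag. You propose to prepend an extrinsic global minimizer of $f$ and then truncate by a constant; but without further control on $f_{\min}$ relative to the first positive value $f(T_2)$, the condition $f(T_2)>-2f_{\min}$ need not hold after any bounded truncation, so this step as written is a gap. The paper sidesteps the issue by choosing, for each block, the reference point in the definition of $f$ to be the block's \emph{last} index $j_{t+r}$ (rather than the fixed $i+l$); since $A_{j_{t+r}}$ is by construction the maximizer defining $s(\kappa_{j_{t+r}})$, one gets $f(A_{j_{t+r}})=0=f_{\min}$ directly. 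With $T_0=A_{j_{t+r}}$ the condition $f(T_1)>-2f_{\min}=0$ then reduces to the already-verified positivity $f(A_{j_k})>0$, and no prepending or truncation is needed. Claims~\ref{thm:kappadistance} and~\ref{fast_decrease} go through verbatim with $i+l$ replaced by $j_{t+r}$, so the rest of your argument transfers unchanged.
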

\begin{proof}
  The same technique is applicable here as in
  Lemma~\ref{thm:I1_size}. Considering the sequence of
  $\delta_i, \delta_{i+1}, \dots , \delta_{i+l}$ and using
  Theorem~\ref{thm:erdos-szekeres}, a monotonic subsequence
  \[
    \delta_{j_1}, \delta_{j_2},\dots, \delta_{j_{l'}}
  \]
  of length
  $\lfloor\sqrt{l}\rfloor$ must exist.

\paragraph{Case 1. $\delta_{j_1}\geq \delta_{j_2}\geq\cdots \geq \delta_{j_{l'}}$.}

  Let us recall that
  $\delta_{j_{k}}\geq \frac{1}{\sqrt{2}}\delta_{j_{k+2}}$ and
  \[\frac{f(A_{j_{k}})}{\delta_{j_{k}}}=F{j_{k}}\geq 2 F_{j_{k+2}}=2
    \frac{f(A_{j_{k+2}})}{\delta_{j_{k+2}}},\]
  therefore
  \[f(A_{j_{k}})\geq f(A_{j_{k+2}})\]
  holds for all $t\leq k \leq t+r-2$.

  Then we can use Theorem~\ref{thm: submodfunc} with
  $T_0=A_{j_{t+r}}, T_1= A_{j_{t+r-4}},\dots, T_k=A_{j_{t+r-4k}}$.
  The conditions of theorem~\ref{thm: submodfunc} hold true, firstly because 
  \begin{align*}
    f(A_{j_{k}})=
    &\sigma_{j_{t+r}}\delta_{j_k}+b_{j_k}-\kappa_{j_{t+r}}\partial_{j_k}
      = \delta_{j_k}\big(\sigma_{j_{t+r}}-
      (\sigma_{j_k}-(\kappa_{j_{k}}-\kappa_{j_{t+r}})s_{j_k})\big) \\
    >& \delta_{j_k}\big(\sigma_{j_{t+r}}
       -(\sigma_{j_k}+(\kappa_{j_k}-\kappa_{j_{k+1}})s_{j_k})\big)
       \geq \delta_{j_k}\big(\sigma_{j_{t+r}}-0\big)>0
  \end{align*}
  hold for all $t\leq k \leq t+r-2$, and
  \begin{align*}
    f(T_0)=
    &f(A_{j_{t+r}})=\sigma_{j_{t+r}}\delta_{j_{t+r}}
      +b_{j_{t+r}}-\kappa_{j_{t+r}}\partial_{j_{t+r}} \\ 
    =&-b_{j_{t+r}}+\kappa_{j_{t+r}}\partial_{j_{t+r}}
       +b_{j_{t+r}}-\kappa_{j_{t+r}}\partial_{j_{t+r}}=0
  \end{align*}
  holds for $k=t+r$, therefore $f(T_0)=f_{\min}$ and
  $f(T_1)=f(A_{j_{t+r-4}})>0=-2f_{\min}$.
  
  Secondly, because
  \begin{align*}
    f(T_k)=f(A_{j_{t+r-4k}})\geq 4 f(A_{j_{t+r-4k+4}})
    = 4 f(A_{j_{t+r-4(k-1)}})= 4 f(T_{k-1} )
  \end{align*}
  Due to Theorem~\ref{thm: submodfunc}, we have $r$ is at most
  $4\binom{n+1}{2}+1=\mathcal{O}(n^2)$.
  
  In conclusion, the length of increasing subsequence $\sqrt{l}=\mathcal{O}(n^2)$, which means that the statement of the theorem holds true in
  this case.

  \paragraph{Case 2.  $\delta_{j_1}\leq \delta_{j_2}\leq\cdots \leq \delta_{j_{l'}}$.}
  First, let $j_{t_1}, j_{t_2}, \dots, j_{t_q}$ denote the indices
  such that $\sqrt[4]{2} \delta_{j_t}\leq \delta_{j_{t+1}}$. Then
  $\delta_{j_{t_{k-4}}}\leq \frac{1}{2} \delta_{j_{t_k}}$ holds for
  all $5\leq k \leq q $. Since $\delta_j$ is linear, using
  Lemma~\ref{goemans}, it follows that $q$ is at most
  $\mathcal{O}(m\log(m))$.

  On the other hand, let $j_t, j_{t+1}, \dots , j_{t+r}$ be a
 subsequence of \emph{consecutive} indices such that
  $\sqrt[4]{2} \delta_{j_t}> \delta_{j_{t+1}}$.  Recall that
  $\delta_{j_{k}}\geq \frac{1}{\sqrt{2}}\delta_{j_{k+2}}$ and
  \[
    \frac{f(A_{j_{k}})}{\delta_{j_{k}}}=F{j_{k}}\geq 2 F_{j_{k+2}}=2
    \frac{f(A_{j_{k+2}})}{\delta_{j_{k+2}}},
  \]
  therefore
  \[
    f(A_{j_{k}})\geq \sqrt{2} f(A_{j_{k+2}})
  \]
  hold for all $t\leq k \leq t+r-2$.
  
  Then we can use Theorem~\ref{thm: submodfunc} with
  $T_0=A_{j_{t+r}}, T_1= A_{j_{t+r-8}},\dots, T_k=A_{j_{t+r-8k}}$
  similarly to the first case and we get that $r$ is at most
  $8\binom{n+1}{2}+1=\mathcal{O}(n^2)$.
  
  In conclusion, the length of increasing subsequence $\sqrt{l}$ is at
  most $\mathcal{O}(n^2m\log(m))$, which proves the theorem.
\end{proof}

Finally, it follows from Lemma~\ref{thm:I1_size} and
Theorem~\ref{thm:consecutive} that the number of iterations the usual Discrete Newton Method is
strongly polynomial. Thus, the minimum of $s(\kappa)$ can be
computed with $\mathcal{O}(n^4m^8\log^6(m))$ submodular
function minimization.  In summary, the following theorem is proved
and an algorithm for finding $\sigma^*$ is obtained.

\begin{thm}
  The value of $\sigma^*$ along with maximizing sets
  $X,Y\subseteq V$ can be computed in
  $\mathcal{O}(n^4m^6\log^6(m)\cdot(n^2+m\log^2(m)))$ submodular function
  minimization problems.
\end{thm}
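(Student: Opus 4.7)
The plan is to assemble the stated bound directly from the two pieces developed in this section: the iteration count of the Handler--Zang method applied to $s(\kappa)$, and the cost of a single evaluation of $s(\kappa)$ (equipped with a maximizing set).

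First, I would run the procedure from the ``Finding the Initial Interval'' subsection to produce either an already-optimal answer or a starting bracket $[a_1, b_1]$ for Handler--Zang. That phase performs only a handful of Newton steps on $s(\kappa)$, plus one computation of $\kappa_{\max}$; by Theorem~\ref{thm:s(kappa)computed} and the analogous bound stated for $\kappa_{\max}$, each of these costs $n^2 + \mathcal{O}(m\log^2(m))$ submodular function minimizations, so the initialization fits inside the per-iteration budget identified below.

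Next, I would invoke the Handler--Zang method proper. The preceding theorem bounds its number of iterations by $\mathcal{O}(n^4 m^6 \log^6(m))$, argued via the comparison with the Discrete Newton Method of Algorithm~\ref{alg:s_kappavonas}. Each iteration of Handler--Zang updates either $a_i$ or $b_i$, and for this it needs both $s(\kappa_i)$ and a subgradient at $\kappa_i$. Using the rewriting~\eqref{eqn:eskappa-frac}, a set $X$ attaining the maximum there simultaneously yields the value $s(\kappa_i)$ and the subgradient $\partial_{\tilde{c}}(X)/\delta_{\tilde{c}}(X)$, so a single call of the algorithm from Theorem~\ref{thm:s(kappa)computed} supplies everything the bracket update requires, at a cost of $n^2 + \mathcal{O}(m\log^2(m))$ submodular function minimizations.

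Multiplying iteration count by per-iteration cost yields the promised $\mathcal{O}(n^4 m^6 \log^6(m)\cdot (n^2 + m\log^2(m)))$ submodular function minimizations. Upon termination, $\sigma^* = \min_\kappa s(\kappa)$ is attained, and the two sets that last pushed the bracket from the positive-subgradient and negative-subgradient sides respectively serve as the maximizing pair $X, Y$ in the dual characterization of Theorem~\ref{thm:weightedoptimum}. The only delicate point is the accounting: I must check that the Discrete Newton routine of Algorithm~\ref{alg:s_kappa} used to evaluate $s(\kappa)$ actually returns a maximizing set (not merely the optimal $\sigma$), so that the subgradient is obtained for free; inspecting the algorithm, the final $X_i$ returned when $h_i=0$ plays exactly this role, so no additional submodular function minimizations are needed beyond those already counted.
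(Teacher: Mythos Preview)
Your proposal is correct and follows essentially the same approach as the paper: multiply the Handler--Zang iteration bound $\mathcal{O}(n^4m^6\log^6(m))$ by the per-iteration cost $n^2+\mathcal{O}(m\log^2(m))$ of evaluating $s(\kappa)$ from Theorem~\ref{thm:s(kappa)computed}. The paper presents this theorem as an immediate summary of the preceding analysis without a separate proof; your write-up is actually more explicit than the paper on two points (how the subgradient is obtained for free from the maximizing set in Algorithm~\ref{alg:s_kappa}, and how the dual pair $X,Y$ is read off from the last sets on the two sides of the bracket), but these elaborations are consistent with the paper's intent.
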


\section{Balanced Integral Submodular Flows}\label{sec:integral}

A simple example of a graph having two nodes and two parallel edges
shows that the minimum spread solution is not always possible to be
chosen to be integral, even in case of simple network flows with
integer supply vector. This section shows how an integral flow of
minimum (unweighted) spread can be found.

From now on, let us assume that the submodular function $b$ is integral.

Just as in the weighted case, let $s(\kappa)$ be defined as the
minimum value $\sigma$ for which there exists a
$\left(\kappa\egy,(\kappa+\sigma)\egy\right)$-bounded submodular flow
$x$. Note that $s(\kappa)$ can be computed by solving $\mathcal{O}(|A|)$
supermodular function minimization problems for any given $\kappa$.
Clearly,
\begin{equation*}
    \sigma^*=\min_{\kappa} s(\kappa)
\end{equation*}
Note, that the function $s(\kappa)$ is a convex and piecewise linear.

\begin{defn}
  For an arbitrary integer $\kappa\in\Z$, let $s_I(\kappa)$
  denote the minimum value $\sigma$ for which there exists an
  $\left(\kappa\egy,(\kappa+\sigma)\egy\right)$-bounded
  \emph{integral} submodular flow $x\in\Z^A$.
\end{defn}

\begin{claim}
For any $\kappa\in\Z$, we have $s_I(\kappa)=\lceil s(\kappa) \rceil$.
\end{claim}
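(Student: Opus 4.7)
The plan is to prove the equality by establishing the two inequalities separately. For the easy direction $s_I(\kappa)\geq \lceil s(\kappa)\rceil$, I observe that every integral submodular flow is a submodular flow, so $s_I(\kappa)\geq s(\kappa)$ follows immediately from the definitions. Next, I note that for integer $\kappa$ the quantity $s_I(\kappa)$ is itself an integer: any integer-valued $x$ with $x\geq \kappa\egy$ satisfies $\max_a x(a)-\kappa\in\Z_{\geq 0}$, and $s_I(\kappa)$ equals this value for an optimal integral flow. Combining these two facts yields $s_I(\kappa)\geq \lceil s(\kappa)\rceil$.

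For the reverse direction $s_I(\kappa)\leq \lceil s(\kappa)\rceil$, I need to exhibit an integer-valued $\bigl(\kappa\egy,(\kappa+\lceil s(\kappa)\rceil)\egy\bigr)$-bounded submodular flow. First, I verify that the interval supports at least a fractional flow by checking the feasibility criterion of Theorem~\ref{thm:submodflow}. With $l:=\kappa\egy$ and $u:=(\kappa+\lceil s(\kappa)\rceil)\egy$, the required inequality reads
\[
\varrho_l(X)-\delta_u(X)=\kappa\partial(X)-\lceil s(\kappa)\rceil\,\delta(X)\leq b(X)\qquad(X\subseteq V),
\]
which follows from the analogous inequality with $s(\kappa)$ in place of $\lceil s(\kappa)\rceil$ (valid by the definition of $s(\kappa)$) together with $\delta(X)\geq 0$ and $\lceil s(\kappa)\rceil\geq s(\kappa)$. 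Having established feasibility, I invoke the classical integrality theorem for submodular flow polyhedra: whenever $b$, $l$ and $u$ are integer-valued, every non-empty submodular flow polyhedron contains an integer point. Since $\kappa\in\Z$, $\lceil s(\kappa)\rceil\in\Z$, and $b$ is assumed integer, an integral feasible flow exists in the interval, giving $s_I(\kappa)\leq\lceil s(\kappa)\rceil$.

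The main obstacle is really only one step: the appeal to the integrality of the submodular flow polyhedron (Edmonds--Giles). This result is not stated explicitly in the excerpt but is the standard structural property of submodular flows; the underlying algorithm cited in Theorem~\ref{thm:submodflowalg} delivers integer flows when the data are integer, so the claim is available without further work. Everything else reduces to the elementary observation that rounding $s(\kappa)$ upward preserves the feasibility condition and that $s_I(\kappa)$ is inherently integer-valued.
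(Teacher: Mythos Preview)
Your proof is correct and follows essentially the same route as the paper: both directions rely on the trivial inequality $s_I(\kappa)\geq s(\kappa)$ together with the integrality of the submodular flow polyhedron (Edmonds--Giles) applied to the integer bounds $\kappa\egy$ and $(\kappa+\lceil s(\kappa)\rceil)\egy$. You are slightly more explicit than the paper in justifying that $s_I(\kappa)\in\Z$ and in checking feasibility via Theorem~\ref{thm:submodflow} rather than simply observing that the optimal fractional flow already lies in the enlarged box, but these are presentational differences only.
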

\begin{proof}
  Clearly, $s_I(\kappa)\geq s(\kappa)$.  On the other hand, the
  definition of $s(\kappa)$ implies the existence of a
  $\left(\kappa\egy,\left(\kappa+s(\kappa)\right)\egy\right)$-bounded
  submodular flow $x$. This flow is also bounded by the integer
  vectors $\kappa\egy$ and
  $\left(\kappa+\lceil s(\kappa)\rceil\right)\egy$, therefore ---
  because of the integrality of the submodular flow
  polyhedron~\cite{Edmonds-Giles} --- an integer submodular flow must
  also exist between these bounds.
\end{proof}

The claim above and the convexity of $s(\kappa)$ immediately give the
following.
\begin{claim} The spread $\sigma_I^*$ of the balanced integral
  submodular flow can be computed as
  \[\sigma_I^*=\min\left\{s_I(\lfloor \sigma^*\rfloor),
      s_I(\lceil \sigma^*\rceil)\right\}
    =\min\left\{\lceil s(\lfloor \sigma^*\rfloor)\rceil,\lceil s(\lceil
      \sigma^*\rceil)\rceil\right\}\].
  \qedwhite{}
\end{claim}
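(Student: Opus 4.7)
The plan is to use the convexity of $s(\kappa)$ (the unweighted analogue of Claim~\ref{lem:sconvex}, which is proved by the same averaging argument as in the weighted case) together with the identity $s_I(\kappa)=\lceil s(\kappa)\rceil$ just established. Note that the symbol $\sigma^*$ inside the floors and ceilings of the stated formula is to be read as a real minimizer $\kappa^*$ of $s(\cdot)$, i.e.\ a value with $s(\kappa^*)=\sigma^*$; such a $\kappa^*$ exists and is finite by feasibility and the dual characterizations of $\sigma^*$ already given in Theorems~\ref{thm:eqnulla} and~\ref{thm:eqxy}.

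First I would show that
\[
\min_{\kappa\in\Z}s(\kappa)=\min\{s(\lfloor\kappa^*\rfloor),s(\lceil\kappa^*\rceil)\}.
\]
For any integer $\kappa\leq\lfloor\kappa^*\rfloor$, the point $\lfloor\kappa^*\rfloor$ lies on the segment $[\kappa,\kappa^*]$, so convexity combined with $s(\kappa^*)\leq s(\kappa)$ gives $s(\lfloor\kappa^*\rfloor)\leq\max\{s(\kappa),s(\kappa^*)\}=s(\kappa)$. The case $\kappa\geq\lceil\kappa^*\rceil$ is symmetric, and if $\kappa^*$ is itself an integer the two endpoints coincide and the formula is trivial.

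Second, I would transfer the statement from $s$ to $s_I$ via the ceiling. Because $x\mapsto\lceil x\rceil$ is nondecreasing and integer-valued, it commutes with the minimum over any finite set of reals, so
\[
\sigma_I^*=\min_{\kappa\in\Z}s_I(\kappa)=\min_{\kappa\in\Z}\lceil s(\kappa)\rceil=\bigl\lceil\min_{\kappa\in\Z}s(\kappa)\bigr\rceil=\bigl\lceil\min\{s(\lfloor\kappa^*\rfloor),s(\lceil\kappa^*\rceil)\}\bigr\rceil,
\]
and pushing the ceiling back inside this two-element minimum yields exactly the displayed equality with $\lceil s(\lfloor\kappa^*\rfloor)\rceil=s_I(\lfloor\kappa^*\rfloor)$ and analogously for $\lceil\kappa^*\rceil$. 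There is no real obstacle here: the entire argument rests on convexity of $s$ plus monotonicity of the ceiling. The only point requiring care is ensuring that a finite minimizer $\kappa^*$ exists so that $\lfloor\kappa^*\rfloor$ and $\lceil\kappa^*\rceil$ are well defined, which is where the feasibility assumption and the earlier characterizations of $\sigma^*$ are invoked.
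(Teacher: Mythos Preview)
Your argument is correct and matches the paper's own justification, which is simply the one-line remark ``The claim above and the convexity of $s(\kappa)$ immediately give the following'' --- you have merely spelled out what that sentence means. You also correctly observe that the symbol $\sigma^*$ inside the floors and ceilings is a typo for $\kappa^*$ (a minimizer of $s$ rather than its minimum value); without this reading the formula would not follow from convexity at all.

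One small remark: when you pass from $\min_{\kappa\in\Z}\lceil s(\kappa)\rceil$ to $\bigl\lceil\min_{\kappa\in\Z}s(\kappa)\bigr\rceil$ you invoke commutation of the ceiling with a minimum over an \emph{infinite} set, not a finite one. This still holds here because your first step already shows the integer minimum of $s$ is attained (at $\lfloor\kappa^*\rfloor$ or $\lceil\kappa^*\rceil$), and monotonicity of the ceiling then gives the equality; it would be cleaner to state it that way.
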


\section{Conclusions}
In this paper, we extended the Balanced Network Flow Problem examined by
Scutell\`a and Klinz~\cite{Scutella, bettina99:_balan} to submodular
flows. A min-max formula and strongly polynomial algorithm were
presented to the problem as well as to its weighted version. Finally,
a strongly polynomial algorithm for Balanced Integral
Submodular Flow Problem was given.

\subsection*{Acknowledgement}
The work was supported by the Lendület Programme of the Hungarian
Academy of Sciences – grant number LP2021--1/2021 and by the Hungarian
National Research, Development and Innovation Office – NKFIH, grant
number FK128673.

The authors would like to acknowledge the valuable
suggestions of András Frank and Kristóf Bérczi.

\printbibliography[heading=bibintoc]

\section*{Appendix}

\appendix

\section{Handler--Zang Method for Convex Function\\
  Minimization}\label{subsec:handler-zang}

This section describes a natural approach to minimize a single
variable convex function. In the case of piecewise linear functions, it is
able to find the exact optimum in a finite number of iterations, which
makes it a useful tool for solving certain parametric combinatorial
optimization problems. Its first use in this scenario is probably due
to Handler and Zang~\cite{handler-zang}.

Let $f:\R\nyil\R$ be a convex function, let $\partial f(x)$ denote the set of
its subgradients at $x$, and let us consider Algorithm~\ref{alg:handlerzang}.
An iteration of the algorithm is shown in Figure~\ref{fig:hziter}.

\begin{algorithm}
  \begin{algorithmic}[1]
    \State{}Let $a_1, b_1\in \R$ such that $a_1\leq \arg\min f \leq b_1$
    \State{}Let $\alpha_1\in \partial f(a_1)$
    and $\beta_1\in \partial f(b_1)$
    \State{}$i:=1$
    \Loop{}
    \State{}$c_i:=\frac{f(b_i)-f(a_i)+\alpha_i a_i
      - \beta_i b_i}{\alpha_i-\beta_i}$
    \State{}$\sigma_i:=\frac{\alpha_i f(b_i)-\beta_i f(a_i)
      +\alpha_i\beta_i(a_i-b_i)}{\alpha_i-\beta_i}$
    \If{$f(c_i)=\sigma_i$}  RETURN $c_i$
    \EndIf{}
    \State{}Let $\gamma_i\in\partial f(c_i)$
    \If{$\gamma_i<0$}
    \State{}$a_{i+1}:=c_i$, $\alpha_{i+1}:=\gamma_i$,
    \State{}$b_{i+1}:=b_i$, $\beta_{i+1}:=\beta_i$
    \Else{} 
    \State{}$a_{i+1}:=a_i$, $\alpha_{i+1}:=\alpha_i$,
    \State{}$b_{i+1}:=c_i$, $\beta_{i+1}:=\gamma_i$
    \EndIf{} 
    \State{}$i\longleftarrow i+1$
    \EndLoop{}
  \end{algorithmic}
  \caption{Handler-Zang method}\label{alg:handlerzang}
\end{algorithm}

\begin{claim} The following statements are true
\begin{itemize}
    \item $a_1\leq a_2\leq a_3\leq\cdots$
    \item $b_1\geq b_2\geq b_3\geq\cdots$
    \item $\alpha_1\leq \alpha_2\leq \alpha_3\leq\cdots$
    \item $\beta_1\leq \beta_2\leq \beta_3\leq\cdots$
    \item For each iteration $i$ either of the following holds
    \begin{enumerate}
        \item $a_i < a_{i+1}$ and $\alpha_i<\alpha_{i+1}$
        \item $b_i > b_{i+1}$ and $\beta_i>\beta_{i+1}$
    \end{enumerate}
  \item The subgradients $\gamma_i\quad (i=1,2,\dots)$ computed during
    the execution are all different.
\end{itemize}
\end{claim}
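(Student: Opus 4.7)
The plan is to induct on $i$, maintaining two invariants: (a) $a_i\leq\argmin f\leq b_i$, and (b) $\alpha_i\leq 0\leq\beta_i$. Invariant (b) follows from (a) because a convex function has non-positive subgradients at any point at or below a minimizer, and non-negative ones at or above. The base case is the initial choice of $a_1, b_1$. I would also verify that $c_i\in[a_i,b_i]$, which follows once one notes that $\alpha_i\leq\beta_i$ (monotonicity of subgradients applied to $a_i\leq b_i$) and reads off from the explicit formula for $c_i$ that it is a convex combination of $a_i$ and $b_i$.

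For the four non-strict monotonicity bullets, I would argue update-by-update. On a $\gamma_i<0$ step the algorithm sets $a_{i+1}:=c_i\geq a_i$ and $\alpha_{i+1}:=\gamma_i$ while leaving $b_i,\beta_i$ unchanged; convexity together with $c_i\geq a_i$ gives $\gamma_i\geq\alpha_i$. The $\gamma_i\geq 0$ case is symmetric and yields $b_{i+1}\leq b_i$ and $\beta_{i+1}\leq\beta_i$. (Consequently, the fourth bullet as written, $\beta_1\leq\beta_2\leq\cdots$, should read $\beta_1\geq\beta_2\geq\cdots$, which is in any case what the last-bullet strict inequality $\beta_i>\beta_{i+1}$ demands.)

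For the strict inequalities in the penultimate bullet, the crux is that any failure of strictness forces $f(c_i)=\sigma_i$, causing the algorithm to return at the test $f(c_i)=\sigma_i$ before $\gamma_i$ is ever defined. On a $\gamma_i<0$ step with $c_i>a_i$, if $\gamma_i=\alpha_i$ then the supporting line at $c_i$ evaluated at $a_i$ gives $f(c_i)\leq f(a_i)+\alpha_i(c_i-a_i)=\sigma_i$, which combined with the lower-envelope bound $f(c_i)\geq\sigma_i$ forces equality; the degenerate case $c_i=a_i$ gives $\sigma_i=f(a_i)=f(c_i)$ directly. The $b$-side argument mirrors this.

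Finally, to show the $\gamma_i$ are pairwise distinct, I would split by sign. If $\gamma_i$ and $\gamma_j$ have the same sign with $i<j$, then both are recorded in the same sequence ($\alpha$ or $\beta$), and because iteration $j$ is itself an update in that sequence, the strict-monotonicity step just proved gives $\gamma_j\neq\gamma_i$; pairs of opposite sign are trivially distinct. The main obstacle is the strict-inequality step, since the non-strict monotonicities are pure convexity but ruling out equality requires carefully extracting the termination condition $f(c_i)=\sigma_i$ from the coincidence of two tangent lines — after that the distinctness of the $\gamma_i$ is a short sign-and-monotonicity argument.
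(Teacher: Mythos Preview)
The paper states this claim without proof, so there is no argument to compare against; your proposal supplies a complete and correct proof where the paper gives none. Your approach --- maintaining the bracketing invariant $a_i\leq\argmin f\leq b_i$, reading off the weak monotonicities from monotonicity of the subdifferential, and deducing the strict inequalities by showing that any failure forces $f(c_i)=\sigma_i$ and hence termination before $\gamma_i$ is ever computed --- is the natural one and is carried out cleanly. You are also right that the fourth bullet in the paper contains a typo: it should read $\beta_1\geq\beta_2\geq\cdots$, consistent with the strict inequality $\beta_i>\beta_{i+1}$ in the next item and with the symmetric role of $(b_i,\beta_i)$ versus $(a_i,\alpha_i)$.
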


\begin{figure}[htb!]
\begin{center}
\includegraphics[scale=0.5]{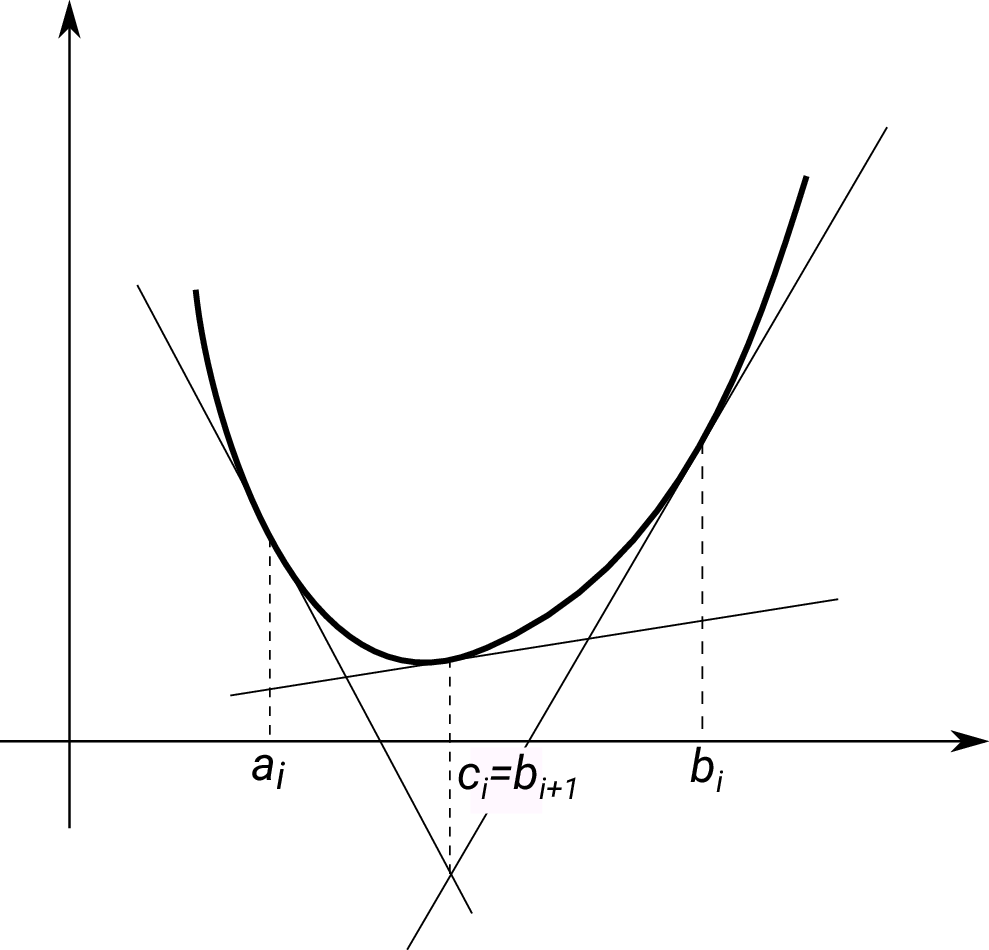}
\end{center}
\caption{An iteration of the Handler-Zang method}\label{fig:hziter}
\end{figure}

\begin{claim}
  If $f$ is a piecewise linear convex function, then
  Algorithm~\ref{alg:handlerzang} finds the minimum of $f$ in finite
  number of steps, and the number of iterations is at most the number
  of linear segments of $f$.
  \qedwhite{}
\end{claim}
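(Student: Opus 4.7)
The plan is to show that each iteration of Algorithm~\ref{alg:handlerzang} either terminates with the true minimum or ``consumes'' at least one linear segment of $f$, so that the total number of iterations cannot exceed the number $k$ of segments. Throughout I would use as running invariants, provided by the preceding claim, that $\alpha_i\leq 0\leq \beta_i$, that the bracket $[a_i,b_i]$ contains a minimizer, and that the subgradients $\gamma_1,\gamma_2,\dots$ produced by the algorithm are pairwise distinct.

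First, I would verify that $\sigma_i$ is a valid lower bound for $\min_{x\in[a_i,b_i]} f(x)$. By convexity, each of the two affine functions $\ell_a(x):=f(a_i)+\alpha_i(x-a_i)$ and $\ell_b(x):=f(b_i)+\beta_i(x-b_i)$ lies weakly below $f$, and the point $c_i$ is defined as the intersection of $\ell_a$ and $\ell_b$ with common value $\sigma_i$. Hence $\sigma_i\leq f(c_i)$, and whenever equality holds the lower bound is attained, so $c_i$ is a genuine minimizer; this justifies the return step at line 7.

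Second, I would analyse the non-terminating case using the piecewise linear structure of $f$. Let the slopes of the linear segments be $m_1<m_2<\cdots<m_k$. Any subgradient of $f$ equals some $m_j$ in the interior of a segment, or lies in a closed interval $[m_j,m_{j+1}]$ at a breakpoint; in either case one may choose a subgradient that coincides with a segment slope. I would make this the standing choice, so that every $\gamma_i$ belongs to $\{m_1,\dots,m_k\}$. If $f(c_i)>\sigma_i$, then $c_i$ cannot simultaneously lie on both tangent lines $\ell_a$ and $\ell_b$, which rules out $c_i$ being a breakpoint separating a segment of slope $\alpha_i$ from one of slope $\beta_i$. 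A short case analysis (splitting on whether $c_i$ is in the interior of a segment or at a breakpoint) then shows that one can choose $\gamma_i\in\partial f(c_i)\cap\{m_1,\dots,m_k\}$ with $\alpha_i<\gamma_i<\beta_i$.

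Finally, I would conclude by counting. By the preceding claim the values $\gamma_1,\gamma_2,\dots$ are pairwise distinct, and by the construction above each $\gamma_i$ is one of the $k$ segment slopes $m_j$. Since each iteration (other than the terminating one) produces a new $\gamma_i$ which is recorded as the new $\alpha_{i+1}$ or $\beta_{i+1}$, the total number of iterations is bounded by the number of distinct segment slopes, i.e.\ at most $k$. The step I expect to require the most care is the second one: proving that a segment-slope $\gamma_i$ strictly between $\alpha_i$ and $\beta_i$ is always available in the subdifferential at $c_i$ when the termination test fails, cleanly handling both the interior and the breakpoint case without off-by-one slippage in the final count.
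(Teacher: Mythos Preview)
The paper states this claim without proof (it ends with a bare \qedwhite{}), so there is nothing to compare against; you have supplied an argument where the paper left the result as a standard fact about the Handler--Zang method.

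Your plan is sound. The lower-bound property of $\sigma_i$ is immediate from convexity, and the crucial step---that when $f(c_i)>\sigma_i$ one can take $\gamma_i$ to be a segment slope strictly between $\alpha_i$ and $\beta_i$---follows from monotonicity of the subdifferential: if $\gamma_i=\alpha_i$ then every point of $[a_i,c_i]$ would have $\alpha_i$ as a subgradient, forcing $f$ to be affine with slope $\alpha_i$ on that interval and hence $f(c_i)=\ell_a(c_i)=\sigma_i$, a contradiction; the inequality $\gamma_i<\beta_i$ is symmetric. This handles both the interior and breakpoint cases at once.

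Your caveat about off-by-one slippage is warranted but easily resolved. Distinctness of the $\gamma_i$ together with $\gamma_i\in\{m_1,\dots,m_k\}$ bounds only the \emph{non-terminating} iterations by $k$, leaving $k+1$ in total. To reach the stated bound $k$, observe additionally that $\gamma_i>\alpha_i\geq\alpha_1\geq m_1$ and $\gamma_i<\beta_i\leq\beta_1\leq m_k$, so every $\gamma_i$ lies in $\{m_2,\dots,m_{k-1}\}$; this gives at most $k-2$ non-terminating iterations and hence at most $k-1\leq k$ iterations overall.
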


In applications in combinatorial optimization, the function $f$ is
typically given implicitly in the form of
\[
  f(x):=\max\{a_i+b{_i}x : i\in{\cal I}\}
\]
by a subroutine that is able to find this maximum for any given value
of $x$. (The index set $\cal I$ may be exponentially large). Note that
for the maximizing $i\in{\cal I}$, the value $b_i$ is a subgradient of
$f(x)$.

Even when the function has exponentially many linear segments, a strongly
polynomial bound on the number of iterations can be given for certain
classes convex functions, see e.\,g.~\cite{onresource-japan}.

\end{document}